\documentclass[10pt]{paper}%
\usepackage[T1]{fontenc}
\usepackage[latin1]{inputenc}
\usepackage{makeidx}
\usepackage{amsfonts}
\usepackage{amssymb}
\usepackage{float}
\usepackage{amsmath}
  \usepackage[all]{xy}
  \usepackage{graphicx}
  \usepackage{youngtab}
  \usepackage{amsthm}
  \usepackage{nicefrac}
  \usepackage{fullpage}
  \usepackage{pstricks}
  \usepackage{color}
  \usepackage{mathtools}
  \usepackage{enumerate}

  \usepackage{color}
  
\newtheorem*{Th}{Theorem}
\newtheorem*{Lem}{Lemma}
\newtheorem*{Cor}{Corollary}
\newtheorem*{prop}{Proposition}

\theoremstyle{remark}
\newtheorem*{Rem}{Remark}{\rmfamily}
\theoremstyle{definition}
{\rmfamily}
\newtheorem*{Exa}{Example}{\rmfamily}

  \newtheorem{abs}[subsection]{\bfseries}

\newcommand{\nd}{\noindent}
\newcommand{\ds}{\displaystyle}
\renewcommand{\lq}{\leqslant}
\newcommand{\gq}{\geqslant}

\newcommand{\p}{\mathbb{P}}
\newcommand{\G}{\mathbf{G}}

\newcommand{\Z}{\mathbb{Z}}
\newcommand{\R}{\mathbb{R}}
\newcommand{\C}{\mathbb{C}}
\newcommand{\Q}{\mathbb{Q}}
\newcommand{\X}{\mathbf{X}}
\newcommand{\fc}{\mathbf{c}}
\newcommand{\Y}{\mathbf{Y}}

\newcommand{\h}{\mathbf{h}}
\renewcommand{\l}{\ell}

\renewcommand{\H}{\mathbf{H}}
\newcommand{\m}{\mathbf{m}}
\renewcommand{\a}{\mathbf{a}}
\newcommand{\s}{\mathbf{s}}
\newcommand{\z}{\mathbf{z}}
\newcommand{\q}{\mathbf{q}}
\newcommand{\M}{\mathcal{M}}

\newcommand{\ttheta}{%
\setlength{\unitlength}{1pt}
    \begin{picture}(6.5,0)%
      \put(-.85,0){$\theta$}%
      \put(-1.1,0){$\theta$}%
    \end{picture}%
} 

\newcommand{\tthetatilde}{%
\setlength{\unitlength}{1pt}
    \begin{picture}(6.5,0)%
      \put(-.85,0){$\widetilde \theta$}%
      \put(-1.1,0){$\widetilde \theta$}%
    \end{picture}%
} 

\newcommand{\lgr}{%
\setlength{\unitlength}{1pt}
    \begin{picture}(6.5,0)%
      \put(-.85,0){$\l$}%
      \put(-1.1,0){$\l$}%
    \end{picture}%
} 

\newcommand{\tthetabar}{%
\setlength{\unitlength}{1pt}
    \begin{picture}(6.5,0)%
      \put(-.85,0){$\bar\theta$}%
      \put(-1.1,0){$\bar\theta$}%
    \end{picture}%
}

\newcommand{\is}{\mbox{\scriptsize\boldmath${s}$}}

\DeclareMathOperator{\ho}{Hom}
\DeclareMathOperator{\res}{Res}

\DeclareMathOperator{\gl}{GL}
\DeclareMathOperator{\proj}{Proj}

\DeclareMathOperator{\lie}{Lie}

\DeclareMathOperator{\mat}{Mat}

\DeclareMathOperator{\spec}{Specm}

\DeclareMathOperator{\irr}{Irr}

\renewcommand{\a}{%
\setlength{\unitlength}{1pt}
    \begin{picture}(6.5,0)%
      \put(-.85,0){$\mbi{a}$}%
      \put(-1.1,0){$\mbi{a}$}%
    \end{picture}%
}

\newcommand{\llambda}{%
\setlength{\unitlength}{1pt}
    \begin{picture}(6.5,0)%
      \put(-.85,0){$\lambda$}%
      \put(-1.1,0){$\lambda$}%
    \end{picture}%
} 	

\newcommand{\mmu}{%
\setlength{\unitlength}{1pt}
    \begin{picture}(6.5,0)%
      \put(-.85,0){$\mu$}%
      \put(-1.1,0){$\mu$}%
    \end{picture}%
} 

\newcommand{\llambdabar}{%
\setlength{\unitlength}{1pt}
    \begin{picture}(6.5,0)%
      \put(-.85,0){$\overline\lambda$}%
      \put(-1.1,0){$\overline\lambda$}%
    \end{picture}%
} 	

\newcommand{\mmubar}{%
\setlength{\unitlength}{1pt}
    \begin{picture}(6.5,0)%
      \put(-.85,0){$\overline\mu$}%
      \put(-1.1,0){$\overline\mu$}%
    \end{picture}%
}

\definecolor{vert}{rgb}{0,0.7,0.3}
\definecolor{gris25}{gray}{0.85}

\def\cat#1{{\mathfrak{#1}}}

\author{{Emilie Liboz} }
\title{\textbf{Orderings on Calogero-Moser partition of imprimitive groups}}

\date{}

\makeindex

\DeclareMathAlphabet\mbi{OML}{cmm}{b}{it}

\begin{document}

\renewcommand{\labelitemi}{$\bullet$}
\maketitle


\nd We extend to all parameters the constructions of the geometric and combinatorial orders on $\irr G(\l,1,n)$ due to \cite{go:qu:08}, as well as the relations with the $\a$ and $\fc$-functions. This allows us to generalize these properties for the group $G(\l,e,n)$, at least for $e \nmid n$.


\section{Introduction}

This paper deals with the combinatorial representation theory, in particular the description of so-called \textit{families} for the complex reflection groups $G(\l,e,n)$ as well as the partial ordering that exists on this partition. This field is intimately related to the geometry of flag varieties and the Lie representation theory.

\nd There has been recent partial progress on this using the geometry of quiver varieties and the representation theory of rational Cherednik algebras : let $W$ be a finite complex reflection group, the blocks of $\overline{H_c(W)}$, the restricted rational Cherednik algebra at $t=0$ attached to $W$, induce a partitioning $CM_c(W)$ of the set $\irr W$ called  the \textit{Calogero-Moser partition}. This could generalize the notion of families if $W$ is not a Coxeter group (see \cite{go:qu:08}, \cite{go:ca:09}, \cite{ma:ca:10}  and \cite{be:th:09}).\\

\nd The aim of this paper is to study certain natural orderings on the set $CM_c(G(\l,e,n))$ constructed numerically (by $\a$ or $\fc$-functions), combinatorially (throught the combinatorics of $\l$-cores and quotients) and geometrically (via the Bialynicki-Birula decomposition).

\nd In the case when $e$ does not divide $n$, we give explicit links between the geometric and the combinatorial orderings and the $\a$ and $\fc$-functions; moreover for $\l=2$, we relate the geometric and the combinatorial orderings. 

\nd In the case when $e$ divides $n$, we propose an interesting normal variety that could play the role that quiver varieties play for $G(\l,1,n)$, i.e. a variety $\M$ such that: \begin{itemize} 
 \item there exists a $\C^*$-equivariant morphism  $\M\twoheadrightarrow (\cat{h}\times\cat{h}^*)/G(\l,e,n)$,
 \item $\M^{\C^*}$ is in one-to-one correspondence with $CM_c(G(\l,e,n))$,
 \item the geometric order defined on $CM_c(G(\l,e,n))$ using the Bialynicki-Birula decomposition of $\M$ is related to the other orderings
 \end{itemize} and give evidences for this in the case $\l=e=n=2$.



\section{Ordering on $\C^*$-fixed points of a normal variety}

In this section, we build an ordering on the set of $\C^*$-fixed points of a normal variety, in order to describe geometrically some combinatorial or algebraic order relations defined for some complex reflexion groups.

\begin{abs}\label{para1} 

Let $X$ be a quasiprojective complex variety with a regular action of $\C^*$ such that $|X^{\C^*}|<\infty$. For $x_0\in X^{\C^*}$, we can define the attracting set $$X_{x_0}=\{x\in X,~\lim_{\eta\rightarrow 0}\eta\cdot x=x_0\}.$$

\nd The transitive closure of the rule $$x\prec x' ~\Longleftrightarrow ~\overline{X_x}\cap X_{x'}\neq \emptyset$$ defines a preorder relation on $X^{\C^*}$, where $\overline{X_x}$ is the Zariski-closure of $X_x$.

\begin{Rem} In general this relation is not antisymmetric, but if $X$ is smooth then the Bialynicki-Birula decomposition of $X$ $$X=\bigsqcup_{x\in X^{\C^*}} X_x$$ is filtrable (see \cite{bi:so:76}) and $\prec$ is an order relation.
\end{Rem} We will now show a more general result.

\begin{Th}
If $X$ is a normal variety then the relation $\prec$ defines an order relation on $X^{\C^*}$.
\end{Th}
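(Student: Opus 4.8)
The relation $\prec$ is a preorder by construction (it is the transitive closure of a reflexive relation), so the only thing to prove is antisymmetry: if $x \prec x'$ and $x' \prec x$ then $x = x'$. The plan is to find a $\C^*$-invariant function that behaves monotonically along the relation, which forces any $\prec$-cycle to be constant. Concretely, I would look for a regular (or rational) function $f$ on $X$ such that $f$ is non-constant on each nontrivial attracting set $X_x$, with $f(x_0) = \lim_{\eta \to 0} f(\eta \cdot x)$ controlled in a way that is strictly compatible with $\prec$.

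**Reduction via normality and affine charts.** Since $X^{\C^*}$ is finite and the question is local near the fixed points, I would first reduce to the affine case: cover $X$ by $\C^*$-stable affine open sets (possible because the action is regular and the fixed locus is finite — one can use a standard argument producing $\C^*$-stable affine neighborhoods of each fixed point, e.g. via Sumihiro's theorem), and note that $X_x$ is contained in such a chart around $x$. On an affine $\C^*$-variety $U = \spec A$, the coordinate ring decomposes into weight spaces $A = \bigoplus_{n} A_n$. The key point where normality enters: on a \emph{normal} affine $\C^*$-variety, the attracting set $X_x$ of a fixed point $x$ is itself closed in the open set $\{$points whose limit exists$\}$, and more usefully one controls $\overline{X_x}$ through the behavior of the non-negative weight part $A_{\geq 0}$. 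I would aim to show that normality guarantees enough regular functions of strictly positive weight to "separate levels": the associated graded / the limit construction $x \mapsto \lim_{\eta\to 0}\eta\cdot x$ is then well-behaved.

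**The monotone invariant.** The heart of the argument is to produce, for the whole variety $X$, a $\C^*$-invariant constructible function $\rho : X^{\C^*} \to \Z_{\geq 0}$ (for instance a suitable dimension or a "depth" in a stratification) that is \emph{strictly} monotone along $\prec$: $x \prec x'$ with $x \neq x'$ implies $\rho(x) < \rho(x')$, or the reverse. A natural candidate is $\rho(x) = \dim \overline{X_x}$, or the dimension of a global attracting set; the point to verify is that if $\overline{X_x} \cap X_{x'} \neq \emptyset$ and $x \neq x'$ then $\overline{X_x} \subsetneq \overline{X_{x'}}$ strictly, which gives $\dim \overline{X_x} < \dim \overline{X_{x'}}$. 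Normality is what should make this strict inclusion work — without it the relation fails to be antisymmetric precisely because distinct attracting sets can have intertwined closures, as the Remark warns. Once such a strictly monotone $\rho$ exists, a cycle $x \prec x' \prec x$ forces $\rho(x) < \rho(x) $, a contradiction unless $x = x'$; transitivity then upgrades this to full antisymmetry of the transitive closure.

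**Main obstacle.** The delicate step is establishing that strictness: that $\overline{X_x} \cap X_{x'} \neq \emptyset$ with $x \neq x'$ genuinely forces a \emph{strict} drop of the invariant, rather than merely a non-strict inequality. In the smooth case this is exactly filtrability of the Bialynicki-Birula decomposition \cite{bi:so:76}; for normal $X$ the decomposition into attracting sets need not be a stratification in the usual sense, so I expect to need a careful local analysis on $\C^*$-stable affine charts, using the weight decomposition of the normal coordinate ring and possibly a normalization-of-the-closure argument (passing to $\overline{X_x}$, which may not be normal, but controlling it by its normalization to which the smooth-or-at-least-better-behaved theory applies). Making this local-to-global comparison precise, and checking that the invariant is genuinely $\C^*$-invariant and well-defined on $X^{\C^*}$, is where the real work lies.
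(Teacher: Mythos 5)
Your overall strategy --- reduce antisymmetry of the transitive closure to the existence of an invariant that is monotone along the generating relation --- is the right shape of argument, and you correctly identify that the entire difficulty is making that monotonicity \emph{strict}. But the proposal stops exactly there: the candidate invariant $\rho(x)=\dim\overline{X_x}$ is never shown to work, and the key claim on which it rests (that $\overline{X_x}\cap X_{x'}\neq\emptyset$ with $x\neq x'$ forces a strict inclusion of closures) is neither proved nor obviously true as stated --- indeed the inclusion you write, $\overline{X_x}\subsetneq\overline{X_{x'}}$, points the wrong way (already for $\p^1$ with the standard action one has $\overline{X_x}\cap X_{x'}\neq\emptyset$ with $\overline{X_{x'}}\subsetneq\overline{X_x}$), and in any case two distinct attracting sets can a priori have closures of the same dimension, so even a correct non-strict inclusion would not suffice. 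Your appeal to normality ("normality is what should make this strict inclusion work", "enough regular functions of strictly positive weight") is a hope, not an argument; you acknowledge this yourself in the final paragraph. So the proposal is a plan with the decisive step missing.

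The paper closes this gap by using normality in one precise place: Sumihiro's theorem gives a $\C^*$-equivariant locally closed embedding $\iota:X\hookrightarrow\p^N(\C)$ where the action on $\p^N(\C)$ is diagonal, $\eta\cdot(x_0:\cdots:x_N)=(\eta^{w_0}x_0:\cdots:\eta^{w_N}x_N)$. Grouping the coordinates by weight value $c_1<\cdots<c_r$ one gets fixed components $W_i$ and explicit attracting sets $\p^N(\C)_i$ satisfying $\overline{\p^N(\C)_i}=\bigcup_{k\geqslant i}\p^N(\C)_k$; the weight level $i$ of $\iota(x)$ is then the strictly monotone invariant you were looking for (if $\overline{X_x}\cap X_{x'}\neq\emptyset$ then $j\geqslant i$), and within a single level the attracting set of each fixed point is a closed fiber of the projection to $W_i\simeq\p^{|C_i|-1}(\C)$, so distinct points in the same level cannot be related at all. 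You do cite Sumihiro, but only to produce $\C^*$-stable affine charts, which is not enough: the point is the \emph{global linearized projective} model, where the filtration is visible by inspection. Without that (or some equivalent global construction), the local weight-space analysis on affine charts does not assemble into a single strictly monotone function on $X^{\C^*}$, and the proof does not go through.
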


\begin{proof}
If $X$ is a normal variety then, by \cite[Th. 1]{su:eq:74}, there is a $\C^*$-equivariant locally closed embedding $\iota~:~X\hookrightarrow \p^N(\C)$ for some $N$, where $\p^N(\C)$ has a $\C^*$-action of the form $\eta\cdot(x_0:\cdots:x_N)=(\eta^{w_0}x_0:\cdots:\eta^{w_N}x_N)$ with $w_i\in\Z$, for any $\eta\in\C^*$ and $(x_0:\cdots:x_N)\in\p^N(\C)$. 

\nd Let us describe the attracting sets of $\p^N(\C)$. We can write $\{0,\dots,N\}=C_1\sqcup \dots\sqcup C_r$, where $C_i=\{k\in\{0,\dots,N\},~w_k=c_i\}$ and $c_1<c_2<\dots<c_r$. Then the fixed points of $\p^N(\C)$ are $(x_0:\cdots :x_N)\in\p^N(\C)$ where $\forall j\notin C_i,~x_j=0$. Thus the connected components of $\p^N(\C)^{\C^*}$ are $$W_i=\{(x_0:\cdots :x_N)\in\p^N(\C), \forall j\notin C_i,~x_j=0\}$$ for $1\lq i\lq r$ and $W_i$ is isomorphic to $\p^{|C_i|-1}(\C)$.
Then the set
 $$\begin{array}{rcl}\p^N(\C)_i&:=&\bigcup_{x\in W_i}\p^N(\C)_x\\&=&\{(y_0:\cdots :y_n)\in\p^N(\C),

\exists j\in C_i,~y_j\neq 0\text{ and }\forall j\in C_1\sqcup\dots \sqcup C_{i-1},~y_j=0\}\end{array}$$ has the following properties : $$\p^N(\C)_i\simeq\p^{|C_i|-1}(\C)\times\mathbb{A}^{|C_{i+1}|+\cdots+|C_r|}(\C),~\overline{\p^N(\C)_i}\simeq\p^{|C_{i}|+\cdots+|C_r|-1}(\C)\text{ and }\displaystyle{\overline{\p^N(\C)_i}=\bigcup_{k\geqslant i}\p^N(\C)_k}.$$ Moreover, if $x\in W_i$ then $\p^N(\C)_x=p^{-1}\{x\}$ is closed in $\p^N(\C)_i$, where $p$ is the projection on $\p^{|C_i|-1}(\C)$.

\nd Let $x$ et ${x'}\in X^{\C^*}$ such that $\overline{X_x}\cap X_{x'}\neq \emptyset$ et $\overline{X_{x'}}\cap X_x\neq \emptyset$. We have to show that $x={x'}$, using the embedding $\iota~:~X\hookrightarrow \p^N(\C)$. If $\iota(x)\in W_i$ then $X_x$ satisfies $\iota(\overline{X_x})\subset \overline{\iota(X_x)}$ and $\iota(X_x)\subset \p^N(\C)_{\iota(x)}\subset \p^N(\C)_i$. Thus, if $\iota(x)\in W_i$ and $\iota({x'})\in W_j$, then combining $\overline{X_x}\cap X_{x'}\neq \emptyset$ with $\overline{X_{x'}}\cap X_x\neq \emptyset$, we obtain $\overline{\p^N(\C)_i}\cap \p^N(\C)_j\neq \emptyset$ and $\overline{\p^N(\C)_j}\cap \p^N(\C)_i\neq \emptyset$. Thus, it follows from the above description of $\overline{\p^N(\C)_i}$ that $i=j$. Finally, since $ \p^N(\C)_{\iota(x)}$ and $ \p^N(\C)_{\iota({x'})}$ are closed on $\p^N(\C)_i$, the fact that $\overline{\p^N(\C)_{\iota(x)}}\cap\p^N(\C)_{\iota({x'})}$ is not empty shows that $\iota(x)=\iota({x'})$ and $x={x'}$.

\end{proof}

\end{abs}

\begin{abs}\label{2.2}

Let $X$ and $Y$ be two varieties as in \S \ref{para1}. Assume that there exists a $\C^*$-equivariant, surjective and projective morphism  $\pi~:~X\rightarrow Y$. Then, providing $X=\bigsqcup_{x\in X^{\C^*}}X_x$, we will compare the attracting sets of $X$ and $Y$. For instance, it is easy to check that:

\begin{Lem}\label{lem1}
 $\pi(X^{\C^*})=Y^{\C^*}$
and for $y\in Y^{\C^*}$, $Y_{y}=\displaystyle{\bigcup_{\substack{x\in X^{\C^*} \\ \pi(x)=y}}\pi(X_x)}$
\end{Lem}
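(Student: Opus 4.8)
The plan is to prove the two assertions of Lemma~\ref{lem1} separately, using the hypotheses that $\pi$ is $\C^*$-equivariant, surjective and projective, and that both $X$ and $Y$ have isolated fixed points with $X$ admitting its Bialynicki--Birula decomposition.

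\textbf{Step 1: $\pi(X^{\C^*})=Y^{\C^*}$.} One inclusion is immediate: if $x\in X^{\C^*}$, then $\C^*$-equivariance of $\pi$ gives $\eta\cdot\pi(x)=\pi(\eta\cdot x)=\pi(x)$ for all $\eta$, so $\pi(x)\in Y^{\C^*}$. For the reverse inclusion, fix $y\in Y^{\C^*}$ and consider the fibre $F=\pi^{-1}(y)$; since $\pi$ is surjective $F\neq\emptyset$, and since $\pi$ is projective $F$ is a projective variety, hence proper. Moreover $F$ is $\C^*$-stable because $y$ is a fixed point and $\pi$ is equivariant. A nontrivial action of $\C^*$ on a nonempty proper variety always has a fixed point (e.g.\ take any $x\in F$ and let $\eta\to 0$ along the orbit; properness forces the limit to exist in $F$, and the limit is $\C^*$-fixed). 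Hence there is $x\in F\cap X^{\C^*}$ with $\pi(x)=y$, which proves $Y^{\C^*}\subseteq\pi(X^{\C^*})$.

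\textbf{Step 2: description of $Y_y$.} Fix $y\in Y^{\C^*}$. For the inclusion $\bigcup_{\pi(x)=y}\pi(X_x)\subseteq Y_y$: if $x\in X^{\C^*}$ with $\pi(x)=y$ and $z\in X_x$, then $\lim_{\eta\to 0}\eta\cdot z=x$, so by continuity and equivariance $\lim_{\eta\to 0}\eta\cdot\pi(z)=\pi(x)=y$, i.e.\ $\pi(z)\in Y_y$. For the reverse inclusion, take $y'\in Y_y$, so $\lim_{\eta\to 0}\eta\cdot y'=y$. Using $X=\bigsqcup_{x\in X^{\C^*}}X_x$, pick any preimage $z_0\in\pi^{-1}(y')$ and let $x=\lim_{\eta\to 0}\eta\cdot z_0\in X^{\C^*}$; this limit exists precisely because $z_0$ lies in some attracting cell $X_x$. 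Then $z_0\in X_x$ and $\pi(z_0)=y'\in\pi(X_x)$, and it remains to check $\pi(x)=y$: by continuity of $\pi$, $\pi(x)=\pi(\lim_{\eta\to 0}\eta\cdot z_0)=\lim_{\eta\to 0}\eta\cdot\pi(z_0)=\lim_{\eta\to 0}\eta\cdot y'=y$. This places $y'$ in $\pi(X_x)$ with $\pi(x)=y$, completing the proof.

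The routine parts are the continuity/equivariance manipulations with limits of one-parameter subgroups. The one genuine point requiring care — the ``main obstacle'' — is the existence of a $\C^*$-fixed point in the nonempty fibre $F=\pi^{-1}(y)$ in Step 1, which is exactly where properness of $\pi$ (from projectivity) is used: without properness the limit $\lim_{\eta\to 0}\eta\cdot x$ for $x\in F$ might escape $F$, or fail to exist. One should also note that the hypothesis $X=\bigsqcup_{x\in X^{\C^*}}X_x$ guarantees every point of $X$ flows to a fixed point as $\eta\to 0$, so the limit defining $x$ in Step 2 genuinely exists; this is what the phrase ``providing $X=\bigsqcup_{x\in X^{\C^*}}X_x$'' in the statement is there to ensure.
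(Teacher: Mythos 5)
Your proof is correct and supplies exactly the details the paper leaves to the reader (the paper only remarks that the lemma ``is easy to check'' and gives no argument): equivariance for the easy inclusions, properness of the fibre $\pi^{-1}(y)$ to produce a fixed point over $y$, and the decomposition $X=\bigsqcup_{x\in X^{\C^*}}X_x$ together with uniqueness of limits in a separated variety to identify $Y_y$ with $\bigcup_{\pi(x)=y}\pi(X_x)$. Nothing to add.
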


\nd If $X$ and $Y$ are normal we will denote by $\prec$ the two orders on $X^{\C^*}$ and $Y^{\C^*}$ defined as in \ref{para1}. The compatibility of $\pi$ with these orders is established in our next result.

\begin{prop}\label{theo1}
Let $x,~x'\in X^{\C^*}$. \begin{enumerate} \item $x\prec x'~\Rightarrow~\pi(x)\prec\pi(x')$
\item $\begin{array}{rcl}\pi(x)\prec\pi(x') & \Rightarrow & \exists x_1,x_2,x_2',x_3',\dots,x_m ~:~\pi(x_1)=\pi(x),~\pi(x_m)=\pi(x'),\\ & & \forall~2\lq i\lq m-1,~\pi(x_i)=\pi(x_i') \\& & \forall~1\lq i\lq m-1,~x_i\prec x_{i+1}\text{ if } i\not\in 2\Z\text{ and } x_i'\prec x_{i+1}' \text{ if }i\in 2\Z. \end{array}$
\end{enumerate}
\end{prop}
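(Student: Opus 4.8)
The plan is to prove both statements by pushing attracting sets forward along $\pi$ and using Lemma~\ref{lem1} repeatedly. For part (1), it suffices to treat the generating rule $\overline{X_x}\cap X_{x'}\neq\emptyset$ and then pass to transitive closures on both sides. So suppose $\overline{X_x}\cap X_{x'}\neq\emptyset$; I want to produce a chain in $Y^{\C^*}$ from $\pi(x)$ to $\pi(x')$. The key observation is that $\pi$ is surjective and projective, hence closed, so $\pi(\overline{X_x})=\overline{\pi(X_x)}$. Combining this with $\pi(X_{x'})\subseteq Y_{\pi(x')}$ from Lemma~\ref{lem1}, applying $\pi$ to a point in $\overline{X_x}\cap X_{x'}$ gives a point in $\overline{\pi(X_x)}\cap Y_{\pi(x')}$. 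The remaining issue is that $\pi(X_x)$ need not be a single attracting set of $Y$; rather $Y_{\pi(x)}$ is a \emph{union} $\bigcup_{\pi(x'')=\pi(x)}\pi(X_{x''})$, so $\overline{\pi(X_x)}\subseteq\overline{Y_{\pi(x)}}=\bigcup_{k}\overline{Y_{z_k}}$ over the finitely many fixed points $z_k$ with some $x''$, $\pi(x'')=\pi(x)$, having $z_k\in\overline{X_{x''}}$... — more carefully, $\overline{Y_{\pi(x)}}$ is a finite union of the closures $\overline{Y_z}$ over fixed points $z$ that appear in the BB stratification of $\overline{Y_{\pi(x)}}$, and each such $z$ satisfies $z\preceq\pi(x)$ by definition. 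So the point we obtained in $\overline{Y_{\pi(x)}}\cap Y_{\pi(x')}$ lands in some $\overline{Y_z}\cap Y_{\pi(x')}$ with $z\preceq\pi(x)$, whence $\pi(x')\preceq z\preceq\pi(x)$... that is the wrong direction, so I need to be careful about orientation: the convention here is $x\prec x'\iff\overline{X_x}\cap X_{x'}\neq\emptyset$, i.e. $x'$ is ``more attracting along closure''. With this convention the argument gives $\pi(x)\prec z'\prec\cdots$ leading to $\pi(x')$, and I just have to check the chain stays monotone; iterating the generating step along a chain realizing $x\prec x'$ yields $\pi(x)\prec\pi(x')$.

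For part (2) the content is a kind of partial converse: $\pi(x)\prec\pi(x')$ does not lift to $x\prec x'$, but it lifts to a zig-zag. Again reduce to the generating relation: assume $\overline{Y_{\pi(x)}}\cap Y_{\pi(x')}\neq\emptyset$. Pick a point $y$ in this intersection. By Lemma~\ref{lem1}, $Y_{\pi(x')}=\bigcup_{\pi(x'')=\pi(x')}\pi(X_{x''})$, so $y=\pi(\xi)$ for some $\xi\in X_{x_2'}$ with $\pi(x_2')=\pi(x')$. On the other hand $y\in\overline{Y_{\pi(x)}}$ and, since $\pi$ is closed and $X=\bigsqcup X_x$, we have $\overline{Y_{\pi(x)}}=\pi\bigl(\overline{\bigsqcup_{\pi(x'')=\pi(x)}X_{x''}}\bigr)$; thus $\xi$ lies in the image of the closure of $\bigsqcup_{\pi(x'')=\pi(x)}\overline{X_{x''}}$, so there is $x_1$ with $\pi(x_1)=\pi(x)$ and a point $\zeta\in\overline{X_{x_1}}$ with $\pi(\zeta)=\pi(\xi)=y$. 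Now $\zeta$ and $\xi$ lie in the same fibre of $\pi$, but possibly in different attracting sets $X_{a}$ and $X_{b}=X_{x_2'}$ of $X$; the fibre $\pi^{-1}(y)$ is itself a projective $\C^*$-variety (it is closed in $X$, hence normal? — not necessarily, but it is projective and carries a $\C^*$-action with finitely many fixed points, and $\zeta,\xi$ flow to fixed points $a,b$ of $X$ lying over $\pi(y)\in Y^{\C^*}$), and within this fibre the BB-type chain from the stratum of $\xi$ to the stratum of $\zeta$ — together with $\zeta\in\overline{X_{x_1}}$ — produces intermediate fixed points $x_2,x_3',\ldots$ all lying over $\pi(y)$, linked by $\prec$-relations in $X$. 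Concatenating the relation $x_1\prec(\text{something in the fibre over }\pi(x))$ with the fibrewise zig-zag and ending at $x_2'\prec x_{?}'$ over $\pi(x')$ gives exactly the asserted alternating chain $x_1,x_2,x_2',x_3',\dots,x_m$.

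The main obstacle, and the step I would spend the most care on, is the fibrewise analysis: controlling how a single attracting set $X_x$ maps into $Y$ when $\pi$ is not finite. The clean statement of Lemma~\ref{lem1}, $Y_y=\bigcup_{\pi(x)=y}\pi(X_x)$, hides the fact that these images overlap and are not closed, so one cannot directly say ``$\overline{X_x}\cap X_{x'}\neq\emptyset\Rightarrow\overline{Y_{\pi(x)}}\cap Y_{\pi(x')}\neq\emptyset$ with matching strata''. I expect the right tool is: $\pi$ projective $\Rightarrow$ closed, so closures push forward ($\pi(\overline Z)=\overline{\pi(Z)}$); and the limit $\lim_{\eta\to0}\eta\cdot\xi$ commutes with $\pi$ by $\C^*$-equivariance and properness, so fixed points of $X$ in a fibre $\pi^{-1}(y)$ map to the single fixed point $y$ — this is what forces the intermediate $x_i$ in the zig-zag to all lie over the same point $\pi(x)$ or $\pi(x')$ and to be glued by a chain. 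Writing the alternating-parity bookkeeping in (2) precisely (which indices carry primes, where the ``turn-around'' in the fibre happens) is the fiddly part, but it is exactly the combinatorial shadow of: go up over $\pi(x)$, move within a fibre, come down over $\pi(x')$.
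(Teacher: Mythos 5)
Your part (1) is correct and, once you strip out the detour about decomposing $\overline{Y_{\pi(x)}}$ into closures of individual attracting sets of $Y$ (which is neither needed nor established for a general normal $Y$), it is exactly the paper's argument: for a generating step, $\emptyset\neq\pi(\overline{X_x}\cap X_{x'})\subset\pi(\overline{X_x})\cap\pi(X_{x'})=\overline{\pi(X_x)}\cap\pi(X_{x'})\subset\overline{Y_{\pi(x)}}\cap Y_{\pi(x')}$, which is already the definition of the generating relation $\pi(x)\prec\pi(x')$; transitivity does the rest.

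Part (2) has a genuine gap. Having found $\zeta\in\overline{X_{x_1}}$ and $\xi\in X_{x_2'}$ in the same fibre over a point $y\in\overline{Y_{\pi(x)}}\cap Y_{\pi(x')}$, you propose to join them by a ``BB-type chain'' inside $\pi^{-1}(y)$. That step is unjustified: $y$ is a general point of the attracting set $Y_{\pi(x')}$, not a fixed point, so $\pi^{-1}(y)$ is not $\C^*$-stable and carries no Bialynicki-Birula decomposition; and even over a fixed point there is no reason why two attracting strata of $X$ meeting a fibre should be linked by a zig-zag of relations of the form $\overline{X_a}\cap X_b\neq\emptyset$. The step is also unnecessary. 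The standing hypothesis $X=\bigsqcup_{z\in X^{\C^*}}X_z$ (assumed at the start of \S\ref{2.2}, and used by the paper at precisely this point) places $\zeta$ in a unique attracting set $X_z$; then $y=\pi(\zeta)\in\pi(X_z)\subset Y_{\pi(z)}$, and since distinct attracting sets of $Y$ are pairwise disjoint and $y\in Y_{\pi(x')}$, this forces $\pi(z)=\pi(x')$. Hence $\zeta\in\overline{X_{x_1}}\cap X_z$ already gives a \emph{single} step $x_1\prec z$ with $\pi(x_1)=\pi(x)$ and $\pi(z)=\pi(x')$; you should not insist on landing at the particular $x_2'$ you chose. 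Consequently the alternating chain in the statement does not come from any fibrewise analysis: each generating step $y_i\prec y_{i+1}$ of a chain realizing $\pi(x)\prec\pi(x')$ lifts to one step $a_i\prec b_i$ in $X$ with $\pi(a_i)=y_i$, $\pi(b_i)=y_{i+1}$, and the pairs $x_i,x_i'$ with $\pi(x_i)=\pi(x_i')$ appear only because $b_i$ and $a_{i+1}$ lie over the same $y_{i+1}$ without being equal.
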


\begin{proof} 

\begin{enumerate}

\item We use Lemma \ref{lem1} and the fact that $\pi$ is closed to show that $$\overline{X_x}\cap X_{x'}\neq \emptyset ~\Rightarrow ~\emptyset\neq \pi(\overline{X_x}\cap X_{x'})\subset\pi(\overline{X_x})\cap \pi(X_{x'})=\overline{\pi(X_x)}\cap \pi(X_{x'})\subset \overline{Y_{\pi(x)}}\cap Y_{\pi(x')}$$ and we conclude by definition of $\prec$.

\item In the same manner, since  $X$ has a finite number of fixed points, it is easily seen that, for $y,~y'\in Y^{\C^*}:$ $$\overline{Y_{y}}\cap Y_{y'}\neq \emptyset~\Rightarrow~\emptyset\neq\displaystyle{\left(\overline{\bigcup_{\substack{x\in X^{\C^*} \\ \pi(x)=y}}\pi(X_x)}\right)\bigcap\left(\bigcup_{\substack{x'\in X^{\C^*} \\ \pi(x')=y'}}\pi(X_{x'})\right)}= \displaystyle{ \left(\bigcup_{\substack{x\in X^{\C^*} \\ \pi(x)=y}}\pi(\overline{X_x})\right)\bigcap\left(\bigcup_{\substack{x'\in X^{\C^*} \\ \pi(x')=y'}}\pi(X_{x'})\right)}.$$ We can now show, using the equality $X=\bigsqcup_{x\in X^{\C^*}}X_x$, that there exists $x,z\in X^{\C^*}$ such that $\pi(x)=y$, $\pi(z)=y'$ and $\overline{X_x}\cap X_z\neq \emptyset$ and we conclude by definition of $\prec$.

\end{enumerate}

\end{proof}

We suppose now that $\pi$ is the quotient by a finite group $G$ that acts regularly on $X$ and such that this action commutes with the $\C^*$-action. In this case, $Y=X/G$ and all the points of $\pi^{-1}(y)$, with $y\in (X/G)^{\C^*}$, are fixed. We thus get a bijection between $X^{\C^*}/G$ and $(X/G)^{\C^*}$. Moreover, using the fact that $g\cdot X_x=X_{g\cdot x}$ for $g\in G$, we can improve the result of Proposition \ref{theo1}.

\begin{Cor}
If $Y=X/G$ where $G$ is a finite group, then for $x,~x'\in X^{\C^*}$: $$\pi(x)\prec\pi(x')~\Longleftrightarrow~\exists~g\in G\text{ such that }x\prec gx'.$$
\end{Cor}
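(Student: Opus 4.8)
The plan is to deduce the equivalence from Proposition \ref{theo1} together with the two structural facts already noted: that $\pi^{-1}(y)$ consists of a single $G$-orbit of $\C^*$-fixed points for each $y\in(X/G)^{\C^*}$, and that $g\cdot X_x=X_{gx}$ for all $g\in G$. The backward implication is the routine one: if $x\prec gx'$ for some $g\in G$, then by part (1) of Proposition \ref{theo1} we get $\pi(x)\prec\pi(gx')$, and since $\pi(gx')=\pi(x')$ because $\pi$ is $G$-invariant, this gives $\pi(x)\prec\pi(x')$. So the real content is the forward implication.

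For the forward direction I would start from part (2) of Proposition \ref{theo1}, which upon $\pi(x)\prec\pi(x')$ produces a chain $x_1,x_2,x_2',x_3',\dots,x_m$ in $X^{\C^*}$ with $\pi(x_1)=\pi(x)$, $\pi(x_m)=\pi(x')$, $\pi(x_i)=\pi(x_i')$ for the intermediate indices, and alternating relations $x_i\prec x_{i+1}$ or $x_i'\prec x_{i+1}'$. The first step is to replace each $x_i$ and $x_i'$ by a $G$-translate so that the chain becomes an honest $\prec$-chain in $X^{\C^*}$. Concretely: since $\pi(x_1)=\pi(x)$ there is $g_1\in G$ with $x=g_1 x_1$; using $g_1\cdot X_{x_1}=X_{x}$ one can translate the whole relation $x_1\prec x_2$ by $g_1$ to get $x\prec g_1 x_2$. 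Then, because $\pi(x_2)=\pi(x_2')$, there is $h_2\in G$ with $g_1 x_2 = (g_1 h_2^{-1})(h_2 x_2)$ and $h_2 x_2$ lies in the same fibre as $x_2'$; one continues, alternately using the equalities $\pi(x_i)=\pi(x_i')$ to jump between the two halves of the chain and using $g\cdot X_z=X_{gz}$ to propagate each $\prec$ relation under the accumulated translation. After walking through all $m$ steps one obtains a single element $g\in G$ and a chain $x=z_1\prec z_2\prec\dots\prec z_m$ with $z_m=g x'$ — here one uses transitivity of $\prec$ (it is an order relation on $X^{\C^*}$ by the Theorem, since $X$, hence $X/G$ and $X$ itself, is normal). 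Hence $x\prec gx'$.

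The step I expect to require the most care is the bookkeeping of the successive $G$-translations: at each index one must track which group element has been applied so far, check that translating by $g$ turns $x_i\prec x_{i+1}$ into $gx_i\prec gx_{i+1}$ (immediate from $g\cdot\overline{X_{x_i}}\cap X_{x_{i+1}}\cdot g^{-1}$-type manipulations, i.e.\ from $g\cdot X_z=X_{gz}$ and $g$ acting by homeomorphism so $g\cdot\overline{X_z}=\overline{X_{gz}}$), and make sure the two ``threads'' $x_i$ and $x_i'$ are glued correctly at the even indices where $\pi(x_i)=\pi(x_i')$ but possibly $x_i\neq x_i'$. Once the chain has been rectified to a genuine $\prec$-chain from $x$ to some $gx'$, transitivity of the order finishes the argument; no further geometry is needed beyond what Proposition \ref{theo1} and the Theorem already supply.
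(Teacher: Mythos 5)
Your proposal is correct and follows essentially the route the paper intends: the paper states the corollary as a direct refinement of Proposition \ref{theo1}, to be obtained from the facts that each fibre $\pi^{-1}(y)$ over a fixed point is a single $G$-orbit of fixed points and that $g\cdot X_x=X_{g\cdot x}$, which is exactly how you rectify the alternating chain from part (2) into a single $\prec$-chain ending at a translate $gx'$. The only cosmetic remark is that you invoke normality to get transitivity, whereas transitivity already holds because $\prec$ is defined as a transitive closure; normality is only needed for antisymmetry, which the corollary does not use.
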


\end{abs}


\section{The cast}

\begin{abs}{\bf Groups $\mathbf{G(\lgr,1,n)}$.} \hspace{0,05cm} A partition of a positive integer $n$ is a nonincreasing sequence $\lambda=(\lambda_1\gq \lambda_2\gq\cdots\gq\lambda_r)$ of positive integers with sum $n$. We write $|\lambda|=n$ and we denote the dominance order on the set $\mathcal{P}(n)$ of partitions of $n$ by $\unlhd$, so that $\lambda \unlhd\mu$ when $\lambda_1+\cdots+\lambda_i\lq\mu_1+\cdots+\mu_i$ for all $i$. An $\l$-multipartition of $n$ is an $\l$-tuple of partitions $\llambda=(\lambda^1,\dots,\lambda^\l)$ with $\ds{\sum_{i=1}^\l |\lambda^i|=n}$.

 We first concentrate on the complex reflection group $W=G(\l,1,n)\simeq (\mu_\l)^n\rtimes \cat{S}_n$ (see the classification of Shephard-Todd in \cite{sh:fi:54}), where $\l$ and $n$ are natural numbers, $\mu_\l$ is the cyclic group of order $\l$ and $\cat{S}_n$ is the symmetric group on $n$ elements. This group acts naturally on its reflection representation $\cat{h}=\C^n$ and $\irr W$ is labeled by the set $\mathcal{P}(\l,n)$ of $\l$-multipartitions of $n$: $$\irr W=\{E_\lambda,~\llambda=(\lambda^1,\dots,\lambda^\l)\in\mathcal{P}(\l,n)\}.$$ The set $\mathcal{S}_W$ of complex reflections of $W$ has $\l$ conjugacy classes: $ \mathcal{S}_0=\{ s_{i,j}\sigma_i^r\sigma_j^{-r}, ~1\leqslant i\neq j\leqslant n \text{ and } 0\leqslant r\leqslant \l-1\}$ 
and $\mathcal{S}_t=\{\sigma_i^t,~1\leqslant i\leqslant n\}$, for $1\leqslant t\leqslant \l-1$, where $s_{i,j}$ is the permutation matrix and $\sigma_i$ is the diagonal matrix with just one term $a_{i,i}$ different from $1$ which is equal to $\zeta_\l=\exp\left(\frac{2\pi\sqrt{-1}}{\l}\right)$. 
\end{abs}

\begin{abs}{\bf Cherednik algebras.} \hspace{0,05cm} 
Let $\h=(h,H_1,\dots,H_{\l-1})\in\mathbb Q^\l$, $(y_1,\dots,y_n)$ the canonical basis of $\cat{h}$ and $(x_1,\dots,x_n)$ the dual basis associated with $(y_1,\dots,y_n)$. The \textit{rational Cherednik algebra} $H_{\h}(W)$ associated with $W=G(\l,1,n)$ is the quotient of $T(\cat{h}\oplus\cat{h}^*)\rtimes W$ by the following relations (see \cite[2.6]{go:qu:08}) :

$$[y_i,x_i]
=-h\sum_{r=0}^{\l-1}\sum_{j\neq i} s_{i,j}\sigma_i^r\sigma_j^{-r}- \sum_{t=1}^{\l-1}\left(\sum_{j=0}^{\l-1}\zeta_\l^{-tj}H_j\right) \sigma_i^t $$ 

$$ [y_i,x_j]=h\sum_{r=0}^{\l-1}\zeta_\l^r s_{i,j}\sigma_i^r\sigma_j^{-r} \text{ if } i\neq j.$$ By \cite[4.15]{et:sy:02}, we have an inclusion $\C[\cat{h}]^W\otimes\C[\cat{h}^*]^W\subset Z(H_{\h}(W))$, where $Z(H_{\h}(W))$ is the center of $H_{\h}(W)$
. This allows us to define the \textit{restricted rational Cherednik algebra} $$\overline{H_{\h}(W)}:=H_{\h}(W)/((\C[\cat{h}]^W\otimes\C[\cat{h}^*]^W)_+H_{\h}(W))$$ where $(\C[\cat{h}]^W\otimes\C[\cat{h}^*]^W)_+$ denotes the ideal in $\C[\cat{h}]^W\otimes\C[\cat{h}^*]^W$ of elements with zero constant term. The PBW property (see \cite[1.3]{et:sy:02}) implies that $$\overline{H_{\h}(W)}\simeq \C[\cat{h}]^{coW}\otimes_\C\C[W]\otimes_\C\C[\cat{h}^*]^{coW}$$ as vector spaces, where $\C[\cat{h}]^{coW}:=\C[\cat{h}]/(\C[\cat{h}]^W)_+$ is the coinvariant ring of $\cat{h}$ with respect to $W$. In particular $\dim_\C \overline{H_{\h}(W)}=|W|^3$. 

Finally, it has been shown in \cite[4.3]{go:ba:03} that $\irr \overline{H_{\h}(W)}$ can be naturally identified with $\irr W$: $$\irr \overline{H_{\h}(W)}=\{L_\h(E),~E\in\irr W\}.$$ Therefore the blocks of the restricted rational Cherednik algebra induce a partitioning of the set $\irr(W)$ called \textit{the Calogero-Moser partition} and denoted by $CM_\h(W)$ (see \cite{go:ca:09} for more details). Moreover, we can define an ordering on $\mathcal{P}(\l,n)$ according to the value of the $\fc$-function that assigns to $\llambda\in\mathcal{P}(\l,n)$ the scalar $\fc_{\h}(\llambda)$ by which the Euler element $eu\in Z(\overline{H_{\h}(W)})$ acts on the simple $\overline{H_{\h}(W)}$-module $L_{\h}(E_\lambda)$: $$\llambda\lq_c\mmu \Longleftrightarrow \fc_\h(\llambda)<\fc_\h(\mmu) \text{ or } \llambda=\mmu.$$
 
\end{abs}

\begin{abs}{\bf Cyclotomic Hecke algebras.} \hspace{0,05cm} 
Let $r\in\Z_{>0}$, $\m=(m^0,\dots,m^{\l-1})\in\Q^\l$ such that $rm^i\in\Z$ for all $i=0,\dots, \l-1$ and let $\q$ be an indeterminate. The \textit{cyclotomic Hecke algebra} $\mathcal{H}_{\q,\m}(W)$ corresponding to $W=G(\l,1,n)$ is the associative algebra over $\C[\q^{\pm 1}]$ which is free as a $\C[\q^{\pm 1}]$-module, with basis $(T_0,\dots,T_{n-1})$ satisfying the braid relations: $$\begin{array}{rcll} T_iT_j&=&T_jT_i & \text{ if }|i-j|>1,\\
 T_iT_{i+1}T_i&=&T_{i+1}T_iT_{i+1} & \text{ if }1\leqslant i\leqslant n-2,\\ 
T_0T_1T_0T_1&=&T_1T_0T_1T_0, \end{array}$$ and the following relations: 
$$\prod_{j=0}^{\l-1}(T_0-\zeta_{\l}^j\q^{rm^j})=0 \text{ and } (T_i-\q^{r})(T_i+1)=0 \text{ for } 1\leqslant i\leqslant n-1.$$

Let $L$ be the field generated by the traces on $\cat{h}$ of all the elements of $W$, let $\mu(L)$ be the group of all the roots of unity of $L$ and let $\z$ be an indeterminate such that $\q=\z^{|\mu(L)|}$. According to \cite[Prop 4.3.4]{ch:bl:09}, $\C(\z)\otimes_{\C[\q^{\pm 1}]}\mathcal{H}_{\q,\m}(W)$ is split semisimple. Let $\llambda\in\mathcal{P}(\l,n)$ and let $s_{\lambda}\in\C[\z^{\pm 1}]$ be the Schur element of $\C(\z)\otimes_{\C[\q^{\pm 1}]}\mathcal{H}_{\q,\m}(W)$ associated with $E_\lambda$ (see \cite[Chap. 7]{ge:ch:00}). We set
$\a_{\m,r}(\llambda):=-val_{\q}(s_\lambda)=-\frac{val_{\z}(s_\lambda)}{k}$. In the same manner as the $\fc$-order, we will consider the ordering on $\mathcal{P}(\l,n)$ induced by the $\a$-function: $$\llambda\lq_{\mbi{a}}\mmu \Longleftrightarrow \a_\h(\llambda)<\a_\h(\mmu) \text{ or } \llambda=\mmu.$$

\begin{Rem}
For $r<0$ we will use the formula of \cite[4.2]{ch:sh:09}: $\a_{\m,r}(\llambda)=\a_{-\m,-r}(^t\llambda)$ to define the $\a$-function, with $^t\llambda=(^t\lambda^1,\dots,^t\lambda^\l)$.
\end{Rem}

\end{abs}

\begin{abs}{\bf Combinatorial orderings.}\label{comb-ord} \hspace{0,05cm}  Let us consider two other orderings defined on $\mathcal{P}(\l,n)$.

 The first one is associated with the symbols of a multipartition (see  \cite[\S 5.5]{ge:re:10}). Let $\m=(m^0,\dots,m^{\l-1})$ $\in\Q^\l$, $\llambda=(\lambda^1,\dots,\lambda^\l)\in\mathcal{P}(\l,n)$ and, for $1\lq i\lq \l$, let $h^i:=h(\lambda^i)$ be the height of the partition $\lambda^i$. For $0\lq i\lq \l-1$, we put $hc^i:=h^{i+1}-m^i$ and $hc^\lambda:=\max(hc^0,\dots,hc^{\l-1})$ and we denote by $[t]$ the integer part of a non-negative rational number $t$. Let $s$ be an integer such that $s\gq hc^\lambda+1$, we call the \textit{shifted $\m$-symbol of $\llambda$ of size $s$}: $$\cat{B}_{\m}^s(\llambda)=\begin{pmatrix}\cat{B}^0\\
\vdots\\
\cat{B}^{\l-1}\end{pmatrix}$$ with $\cat{B}^i=\beta^i(s-hc^{i})$ for $0\leqslant i\leqslant \l-1$, 
 where $\beta^i=(\lambda^{i+1}_{h^{i+1}},\dots,\lambda_j^{i+1}-j+h^{i+1},\dots,\lambda_1^{i+1}-1+h^{i+1})$ and for $\beta=(\beta_1,\dots,\beta_k)$ and $t\geqslant 0$, 
$$\beta(t):=\left\{\begin{array}{ll} (\beta_1,\dots,\beta_k) & \text{ if }0\leqslant t< 1\\
(t-[t],t-[t]+1,\dots,t-1,\beta_1+t,\dots,\beta_k+t) &  \text{ if } t\geqslant1.\end{array} \right.$$

\begin{Exa}
Let $\l=2$, $n=5$, $\llambda=(\emptyset;(3,2))$ and $s=4$, $$\cat{B}^s_{(\frac{1}{2},0)}(\llambda)=\left(\begin{array}{cccc}\frac{1}{2} & \frac{3}{2} & \frac{5}{2} &\frac{7}{2} \\ 0 & 1 & 4 & 6 \end{array}\right).$$
\end{Exa} 

Let $\kappa_1\gq\cdots\gq\kappa_t$ be the elements of $\cat{B}_{\m}^s(\llambda)$ written in decreasing order (allowing repetition), with $t=\l s+\ds{\sum_{i=0}^{\l-1}m^i}$. We set \begin{equation}\label{kappaetN} \kappa_\m^s(\llambda)=(\kappa_1,\dots,\kappa_t)\text{ and }N_{\m}^s(\llambda)=\l\sum_{i=1}^t\sum_{j=0}^{[\kappa_i]}(\kappa_i-j)=\l\sum_{i=1}^t\frac{[\kappa_i]+1}{2}\left(2\kappa_i-[\kappa_i]\right).\end{equation}

\begin{Rem}
(i) It is easily seen that the rational number $\ds{\sum_{i=1}^t \kappa_i}$ does not depend on $\llambda$. We still use the term of "partition" for $\kappa_\m^s(\llambda)$, even if the $\kappa_i$'s are not integers. Moreover, we extend the definition of dominance order $\unlhd$ to sequences of rational numbers in a obvious way. This allows us to define an order relation on $\mathcal{P}(\l,n)$ by comparing the associated partitions $\kappa_\m^s(\llambda)$: $$\llambda\leqslant_\kappa \mmu \Longleftrightarrow \kappa_\m^s(\llambda)\lhd \kappa_\m^s(\mmu) \text{ or } \llambda=\mmu.$$

\nd (ii) The group $\cat{S}_\l$ acts naturally on $\mathcal{P}(\l,n)$ and $\Q^\l$ in the following way: let $w\in\cat{S}_\l$, $\llambda=(\lambda^1,\dots,\lambda^\l)\in\mathcal{P}(\l,n)$ and $(q_1,\dots,q_\l)\in\Q^\l$, $$w\cdot\llambda=(\lambda^{w^{-1}(1)},\dots,\lambda^{w^{-1}(\l)}) \text{ and } w\cdot(q_1,\dots,q_\l)=(q_{w^{-1}(1)},\dots,q_{w^{-1}(\l)})$$ and by definition, we have $\kappa_{w\cdot \m}^s(w\cdot \llambda)=\kappa_\m^s(\llambda)$.
\end{Rem}

The second ordering we define on $\mathcal{P}(\l,n)$ is associated with an other partition, its construction is due to \cite[\S 6]{go:qu:08}. 
Let $\rho$ be a partition and let $s\in\Z$. We associate an infinite set of strictly decreasing integers : $$\beta_s(\rho)=(\rho_1+s,\rho_2+s-1,\dots,\rho_j+s-j+1,\dots).$$ It is clear that we can recover $\rho$ and $s$ from this set since the sequence stabilizes to $s+1-j$ for $j\gq h(\rho)+1$. Let $\llambda\in\mathcal{P}(\l,n)$ and $\s\in\Z_0^\l:=\ds{\{(s_0,\dots,s_{\l-1})\in\Z^\l,~\sum_{i=0}^{\l-1}s_i=0\}}$.
For $1\leqslant i\leqslant \l$, we set $\mathcal{T}_i$ to be the decreasing infinite sequence :

$$\mathcal{T}_i:=\{(\l(x-1)+i,~x\in\beta_{s_{i-1}}(\lambda^i)\}$$ The partition $\tau_{\is}(\llambda)$ is defined as the unique partition $\rho$ such that $\beta_0(\rho)$ is the set $\displaystyle{\mathcal{T}=\bigcup_{i=1}^{\l}\mathcal{T}_i}$ written in decreasing order. This procedure yields a bijection
 $$\tau~:~\left\{\begin{array}{rcl}\ds{ \Z_0^{\l}\times\bigsqcup_{M\gq 0}\mathcal{P}(\l,M) }& \rightarrow &\ds{ \bigsqcup_{N\gq 0}\mathcal{P}(N)}\\

(\s,\llambda)& \mapsto & \tau_{\is}(\llambda). \end{array}\right.$$

\begin{Rem}
If we define, for $w\in \cat{S}_\l$ and $\s=(s_0,\dots,s_{\l-1})\in\Z_0^\l$, $w\cdot \s=(s_0',\dots,s_{\l-1}')$ where for $1\leqslant i\leqslant \l$ $s_{i-1}'=s_{w^{-1}(i)-1}+\frac{w^{-1}(i)-i}{\l}$ then we can still define $\tau_{w\cdot\is}(w\cdot\llambda)$ (because $\ds{\sum_{i=0}^{\l-1}s_i'=0}$ and $\l s_i'\in\Z^\l$ for all $i$) and, by definition, $\tau_{w\cdot\is}(w\cdot\llambda)=\tau_{\is}(\llambda)$. This action of $\cat{S}_\l$ on  $\Z_0^\l$ could look mysterious but we will see later (see \ref{comb-desc}) that in fact it is natural.
\end{Rem}

\begin{Exa}

\nd$\tau_{(1,-1)}((2,2,1);\emptyset)=(5,4,1,1)$ because $\beta_1((2,2,1))=(3,2,0,-2,-3,$ $-4,\dots)$ and $\beta_{-1}(\emptyset)= (-1,-2,-3,-4,\dots)$ thus

$$\begin{array}{rcl}\mathcal{T}_1&=&\{2(3-1)+1, 2(2-1)+1,2(0-1)+1,2(-2-1)+1,2(-3-1)+1,\dots\}\\
&=&\{5,3,-1,-5,-7,-9,\dots\} \end{array}$$

$$\begin{array}{rcl}\mathcal{T}_2&=&\{2(-1-1)+2, 2(-2-1)+2,2(-3-1)+2,\dots\} \\
&=&\{-2,-4,-6,-8,\dots\} \end{array}$$

and $\beta_0(\tau_{(1,-1)}((2,2,1);\emptyset))=(5,3,-1,-2,-4,-5,-6,-7,-8,\dots)$.\end{Exa}

Our next theorem compares the different orderings we defined on $\mathcal{P}(\l,n)$.

\begin{Th}

Let $\m\in\Q^\l$, $s$ an integer such that $s\geqslant \max\{hc^\lambda,hc^\mu\}+1$ and $r\in\Z_{>0}$.

\begin{enumerate}

\item \cite[5.5.16]{ge:re:10}   $\kappa^s_{\m}(\llambda)\triangleleft\kappa^s_{\m}(\mmu)~ \Rightarrow ~\a_{\m,r}(\llambda)> \a_{\m,r}(\mmu)$.

 \item If $\h=(h,H_1,\dots,H_{\l-1})$ is such that $h=r$ and for $1\lq i\lq \l-1$, $H_1+\cdots+H_i=rm^i$ then $$\kappa^s_{\m}(\llambda)\triangleleft\kappa^s_{\m}(\mmu) ~\Rightarrow ~\fc_{\h}(\llambda)< \fc_{\h}(\mmu).$$
 
 \item Let $\s=(s_0,\dots,s_{\l-1})\in\Z_0^\l$. If $\m$ is defined by $m^i=-s_i-\frac{i}{\l}$ for $0\leqslant i\leqslant \l-1$, then $$\tau_{\is}(^t\llambda)\vartriangleleft\tau_{\is}(^t\mmu)~\Longleftrightarrow~  \kappa^s_{\m}(\mmu)\vartriangleleft\kappa^s_{\m}(\llambda).$$
 
 \item Let $\s=(s_0,\dots,s_{\l-1})\in\Z_0^\l$. If $\m$ is defined by $m^i=s_{\l-1-i}-\frac{i}{\l}$ for $0\leqslant i\leqslant \l-1$, then $$\tau_{\is}(^t\llambdabar)\vartriangleleft\tau_{\is}(^t\mmubar)~\Longleftrightarrow~  \kappa^s_{\m}(\llambda)\vartriangleleft\kappa^s_{\m}(\mmu).$$

 \end{enumerate}

\end{Th}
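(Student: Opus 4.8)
The plan is to reduce everything to a direct manipulation of the $\beta$-sets underlying the two combinatorial constructions, exploiting the fact that both $\kappa_{\m}^s$ and $\tau_{\is}$ are built from the same kind of "charged $\beta$-numbers" of the component partitions, just organized differently (one as an unordered multiset of rationals, the other as an interleaved set of integers). The first two parts are essentially citations combined with an elementary comparison of normalization constants: part (1) is literally \cite[5.5.16]{ge:re:10}, and for part (2) I would recall that the $\fc$-function and the $\a$-function differ by an explicit affine expression in the content of $\llambda$ (the Euler element acts with a scalar that decomposes into an $\a$-part coming from the Hecke-algebra side plus a term depending only on $|\llambda|$ and the parameters, via the dictionary $h=r$, $H_1+\cdots+H_i=rm^i$); since that extra term is constant on $\mathcal{P}(\l,n)$, the strict inequality $\kappa^s_{\m}(\llambda)\vartriangleleft\kappa^s_{\m}(\mmu)$ transported through part (1) to $\a_{\m,r}(\llambda)>\a_{\m,r}(\mmu)$ flips to $\fc_{\h}(\llambda)<\fc_{\h}(\mmu)$. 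I would be careful here to check signs and to confirm that the hypothesis on $s$ (namely $s\geqslant\max\{hc^\lambda,hc^\mu\}+1$) is exactly what makes the $\m$-symbols of $\llambda$ and $\mmu$ comparable size, so that $\kappa^s_{\m}$ is well-defined for both.

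For parts (3) and (4), which are the real content, I would argue as follows. Fix $\s\in\Z_0^\l$ and set $m^i=-s_i-\tfrac{i}{\l}$ as in (3). Unwinding the definition of $\tau_{\is}(^t\llambda)$: one forms $\beta_{s_{i-1}}({}^t\lambda^i)$ for each $i$, applies the injection $x\mapsto \l(x-1)+i$, takes the union $\mathcal T$, sorts it decreasingly, and reads off a partition $\rho$ from $\beta_0(\rho)=\mathcal T$. The key observation is that $\beta_0$ of a partition $\rho$ sorted this way is determined, up to the usual stabilization, by the multiset $\{\tfrac{t+1-k}{\l}\}$ obtained by dividing the entries of $\mathcal T$ by $\l$ and re-shifting — and that this multiset is, after the transpose-to-$\beta$-set translation and the charge shift by $s-hc$, precisely $\kappa^s_{\m}(\llambda)$ (hence the appearance of the transpose $^t\llambda$ on the $\tau$-side versus $\llambda$ on the $\kappa$-side, and the order reversal). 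So the plan is: (a) write down an explicit order-reversing bijection between the entries of $\beta_0(\tau_{\is}(^t\llambda))$ and the entries of $\kappa_{\m}^s(\llambda)$ — concretely $\beta_0(\tau_{\is}(^t\llambda))_j \longleftrightarrow c - \tfrac{1}{\l}(\text{that entry})$ for a constant $c$ depending only on $s$, $\m$, $n$ — verifying it respects the stabilization tails; (b) recall that for two partitions (or rational "partitions") the dominance order $\vartriangleleft$ on the partitions themselves is equivalent to the reverse dominance order on the associated $\beta_0$-sets when these have a common sorted tail; (c) combine (a) and (b) to get the stated equivalence. Part (4) is the same argument with the barred variant: $\overline\lambda$ (the complement-type operation already in play in the earlier remarks, i.e. reversing the order of components) absorbs the reindexing $s_{\l-1-i}$ in place of $-s_i$, and one checks that the $\cat{S}_\l$-action $w\cdot\s$ introduced in the Remark after the definition of $\tau$ is exactly the compatibility that makes $m^i=s_{\l-1-i}-\tfrac{i}{\l}$ the correct charge; then invoke part (3) applied to $w\cdot\s$ with $w$ the longest element, using $\tau_{w\cdot\is}(w\cdot\llambda)=\tau_{\is}(\llambda)$ and $\kappa^s_{w\cdot\m}(w\cdot\llambda)=\kappa^s_{\m}(\llambda)$.

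The main obstacle I anticipate is step (a): getting the bookkeeping of the three simultaneous shifts exactly right — the charge $s_{i-1}$ inside $\beta_{s_{i-1}}$, the charge $s-hc^i$ inside $\cat{B}^i=\beta^i(s-hc^i)$, and the interleaving map $x\mapsto \l(x-1)+i$ — so that division by $\l$ really does carry one normalized set onto the other on the nose, including matching the non-integer parts $t-[t]$ that appear in the definition of $\beta(t)$ with the residues $i/\l$ coming from the interleaving. It is precisely the choice $m^i=-s_i-\tfrac{i}{\l}$ (resp. $m^i=s_{\l-1-i}-\tfrac{i}{\l}$) that forces these residues to align, and the hypothesis $s\geqslant\max\{hc^\lambda,hc^\mu\}+1$ that guarantees both sets are large enough to have the common stabilizing tail needed for step (b); I would make sure both of these are used visibly. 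Once the bijection is pinned down the equivalence of the two dominance orders (with the reversal) is formal, since adding equal quantities to all partial sums and reading a sorted decreasing sequence backwards both just negate the inequalities defining $\unlhd$.
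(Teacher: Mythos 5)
Your treatment of parts (1), (3) and (4) matches the paper's own route. For (3) the paper likewise produces an explicit affine identification of $\beta$-sets, namely $\frac{\mathcal{T}_i-(\l-2)}{\l}+s=\tilde{\cat{B}}^{i-1}$ for the charge $m^i=-s_i-\frac{i}{\l}$, whence $\beta_0(\tau_{\overline{w_0\cdot\is}}(\llambda))+\l s=\l\widetilde{\kappa_{\m}^s}(\llambda)+\l-2$, and then uses that transposition reverses dominance and that dominance of partitions is dominance of the associated $\beta$-sets with common tail; part (4) is obtained by twisting by the longest element $w_0$ and the $\cat{S}_\l$-compatibility $\tau_{w\cdot\is}(w\cdot\llambda)=\tau_{\is}(\llambda)$, exactly as you propose. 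The bookkeeping you flag as the main obstacle is indeed where all the work lies, and the choices of $m^i$ and of $s$ enter exactly where you say they do.

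The genuine gap is in part (2). You propose to deduce it from part (1) by asserting that $\fc_{\h}$ and $-\a_{\m,r}$ differ by a quantity depending only on $|\llambda|$ and the parameters, hence constant on $\mathcal{P}(\l,n)$. That relation is false already for $\l=1$: there $\fc_{\h}(\lambda)$ is proportional to $\sum_{b\in\lambda}\cont(b)=n(\lambda')-n(\lambda)$ (with $n(\lambda)=\sum_i (i-1)\lambda_i$), while the $\a$-function is, up to normalization, $n(\lambda)$; and $n(\lambda')+n(\lambda)$ is not constant on $\mathcal{P}(n)$ (compare $(3)$, $(2,1)$ and $(1,1,1)$ for $n=3$). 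In general $\fc$ is affinely related to $a+A$ (valuation plus degree of the Schur element), not to $a$ alone, so the ``extra term'' is genuinely $\llambda$-dependent; your reduction only shows that the two implications are consistent in sign, not that one implies the other. The paper proves (2) independently: by Rouquier's formula the $\fc$-order coincides with the order given by the statistic $N_{\m}^s(\llambda)=\l\sum_i\sum_{j=0}^{[\kappa_i]}(\kappa_i-j)$, a sum of a convex function of the individual symbol entries; a covering relation $\kappa^s_{\m}(\llambda)\lhd\kappa^s_{\m}(\mmu)$ is reduced (via the standard description of covers for dominance) to a single transfer $\kappa_i\mapsto\kappa_i+\alpha$, $\kappa_j\mapsto\kappa_j-\alpha$, and one checks $N_{\m}^s(\mmu)-N_{\m}^s(\llambda)=\l\bigl(f_\alpha(\kappa_i)-f_\alpha(\kappa_j')\bigr)>0$ for an explicit strictly increasing function $f_\alpha$. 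You would need to supply an argument of this kind in place of the constant-shift claim.
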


\begin{Rem}
The relations $h=r$ and $H_1+\cdots+H_i=rm^i$ for $1\lq i\lq \l-1$ are the same as  $\m_{(\mathcal{C},j)}=\h_{(\mathcal{C},j)}$ given in \cite[3.3]{ch:ce:11} with the following changes of the parameter space  : $h=\h_{(\mathcal{C}_1,0)}-\h_{(\mathcal{C}_1,1)}$, $H_i=\h_{(\mathcal{C}_2,i)}-\h_{(\mathcal{C}_2,i-1)}$ for all $1\lq i\lq \l-1$ (see  \cite[3.1]{ma:ca:10}), $r=\m_{(\mathcal{C}_1,0)}-\m_{(\mathcal{C}_1,1)} \text{ and for } 0\lq j\lq \l-1,~m^j=\frac{\m_{(\mathcal{C}_2,j)}}{r}.$ From now on, we will denote equally $\a_{\m,r}$, $\a_\h$, $\fc_{\m,r}$ or $\fc_\h$, considering this change of parameters.
\end{Rem}

\newpage

\begin{proof}\hspace{0,5cm}

\begin{enumerate}

 \setcounter{enumi}{1}

\item Thanks to the formula of  \cite[6.2]{ro:q:08}, it is easy to show that the $\fc$-order is the same as the ordering associated to the function $N_{\m}^s(\llambda)$ we defined in (\ref{kappaetN}) (see \cite{li:al:12} for more details). Thus it is sufficient to show that if $\kappa^s_{\m}(\llambda)\triangleleft~\kappa^s_{\m}(\mmu)$ then $N_{\m}^s(\llambda)<N_{\m}^s(\mmu)$. Let us suppose that there does not exist a partition $\tilde\kappa$ such that $\kappa^s_{\m}(\llambda)\triangleleft~\tilde\kappa~\lhd\kappa^s_{\m}(\mmu)$. It follows from \cite[1.4.10]{ja:th:81} that there exists $j>i$ such that : $\kappa^s_{\m}(\llambda)=(\kappa_1,\dots,\kappa_{i-1},\kappa_i,\kappa_{i+1},\dots,\kappa_{j-1},\kappa_j,\kappa_{j+1},\dots,\kappa_t)$ and $\kappa^s_{\m}(\mmu)=(\kappa_1,\dots,\kappa_{i-1},\kappa_i',\kappa_{i+1},\dots,\kappa_{j-1},\kappa_j',\kappa_{j+1},\dots,\kappa_t)$, with $\kappa_i'=\kappa_i+\alpha$ and $\kappa_j'=\kappa_j-\alpha$, where $\alpha>0$. We can write

 $$\frac{N_{\m}^s(\mmu)-N_{\m}^s(\llambda)}{\l}=f_\alpha(\kappa_i)-f_\alpha(\kappa_j'),$$ with $f_\alpha~:~x\in\R^+\mapsto \frac{[x+\alpha]-[x]}{2}(2x+2\alpha-[x+\alpha]-[x]-1)+\alpha x$ a strictly increasing function. The conclusion follows easily.

\item Let $w_0:=(1, \l)(2, \l-1)\dots$ be the longest element of $\cat{S}_\l$. Using the second remark of \ref{comb-desc} and the fact that $\tau_{\bar\is}({\llambdabar})=^t(\tau_{\is}(\llambda))$ where $\bar\s=(-s_{\l-1},\dots,-s_0)$ and ${\llambdabar}=(^t\lambda^\l,\dots,^t\lambda^1)$, we obtain $$\tau_{\is}(^t\llambda)=\tau_{\is}(w_0\cdot\llambdabar)=\tau_{w_0\cdot\is}(\llambdabar)=^t\tau_{\overline{w_0\cdot\is}}(\llambda),$$ with $(\overline{w_0\cdot\s})_i=-s_i+\frac{\l-1-2i}{\l}.$ Then we have to compare the sequences 
 $\beta_0(\tau_{\overline{w_0\cdot\is}}(\llambda))$ and $\kappa^s_{\m}(\llambda)$ for chosen parameters.

\nd But $\beta_0(\tau_{\overline{w_0\cdot\is}}(\lambda))$ corresponds to $\displaystyle{\bigcup_{i=1}^{\l}\mathcal{T}_i}$ where, for $1\leqslant i\leqslant \l$, $$\mathcal{T}_i= \left\{\l\left(\lambda_j^i-j-s_{i-1}-\frac{i}{\l}\right)+\l-1,~ j\geqslant 1  \right\} .$$

\nd Let $m^i=-s_i-\frac{i}{\l}$ and $s\geqslant\max\{h(\lambda^i)-m^{i-1},h(\mu^i)-m^{i-1},~1\leqslant i\leqslant \l\}+1$, we obtain :

$$\begin{array}{rcl}\frac{\mathcal{T}_i-(\l-2)}{\l}+s & = & (\lambda^i_1-1+m^{i-1}+s,\dots,\lambda^i_{h(\lambda^i)}-h(\lambda^i)+m^{i-1}+s,\\
 & & m^{i-1}+s-h(\lambda^i)-1,\dots,m^{i-1}+s-h(\lambda^i)-[m^{i-1}+s-h(\lambda^i)],\\
 & & m^{i-1}+s-h(\lambda^i)-[m^{i-1}+s-h(\lambda^i)]-1,\dots)\end{array}$$
 
 \nd consequently $$\frac{\mathcal{T}_i-(\l-2)}{\l}+s=\tilde{\cat{B}}^{i-1},$$ where $\tilde{\cat{B}}^i=(\cat{B}^i_{s+[m^i]},\dots,\cat{B}^i_1,\cat{B}^i_1-1,\cat{B}^i_1-2,\dots)$ if $\cat{B}^i=(\cat{B}^i_1,\cdots,\cat{B}^i_{s+[m^i]})$.
 
 \nd Therefore, if we denote by $\widetilde{\kappa_{\m}^s}(\llambda)$ the ordered sequence corresponding to $\widetilde{\cat{B}}=(\widetilde{\cat B}^0,\dots,\widetilde{\cat B}^{\l-1})$, we have $$\beta_0(\tau_{\overline{w_0\cdot\is}}(\llambda))+\l s=\l\widetilde{\kappa_{\m}^s}(\llambda)+\l-2$$

\nd and hence :

$$\begin{array}{rcl}\tau_{\is}(^t\llambda)\vartriangleleft\tau_{\is}(^t\mmu)&\Longleftrightarrow&    \tau_{\overline{w_0\cdot\is}}(\mmu)\lhd\tau_{\overline{w_0\cdot\is}}(\llambda)   \\
&\Longleftrightarrow&    \beta_0(\tau_{\overline{w_0\cdot\is}}(\mmu))\lhd\beta_0(\tau_{\overline{w_0\cdot\is}}(\llambda) )  \\
&\Longleftrightarrow& \widetilde{\kappa_{\m}^s}(\mmu)\vartriangleleft\widetilde{\kappa^s_{\m}}(\llambda) \\
&\Longleftrightarrow& \kappa_{\m}^s(\mmu)\vartriangleleft\kappa_{\m}^s(\llambda).\end{array}$$

\item Since $\tau_{\is}(^t\llambdabar)=\tau_{\is}(w_0\cdot\lambda)$ and $s_{i-1}+\frac{i}{\l}=m^{\l-i}+1=(w_0\cdot \m)^{i-1}+1$ for all $0\lq i\lq \l-1$, in this case the set $\mathcal{T}_i$ is such that $$\frac{\mathcal{T}_i}{\l}-1+s=\tilde{\cat{B}}^{w_0(i)-1}$$ and we can conclude in the same manner of 3.

\end{enumerate}

\end{proof}

\end{abs}

\begin{abs}{\bf Quiver varieties.}\label{qu-var} \hspace{0,05cm}  Let $Q$ be the cyclic quiver with $\l$ vertices $0,\dots,\l-1$, let $Q_{\infty}$ be the quiver obtained by adding one vertex named $\infty$ to $Q$ that is joined to $0$ by a single arrow from $\infty$ to $0$. Let $\overline{Q_\infty}$ be the quiver obtained by inserting an arrow in the opposite direction to every arrow in $Q_\infty$.

\begin{center}
\scalebox{1.3} 
{
\begin{pspicture}(0,-1.8401562)(6.7171874,1.8401562)
\psdots[dotsize=0.12](0.619375,0.91984373)
\psdots[dotsize=0.12](1.219375,0.51984376)
\psdots[dotsize=0.12](1.219375,-0.28015625)
\psdots[dotsize=0.12](0.619375,-0.68015623)
\psdots[dotsize=0.12](0.019375,0.51984376)
\psline[linewidth=0.025999999cm,arrowsize=0.05291667cm 2.0,arrowlength=1.4,arrowinset=0.4]{->}(0.139375,0.57984376)(0.539375,0.83984375)
\psline[linewidth=0.024cm,arrowsize=0.05291667cm 2.0,arrowlength=1.4,arrowinset=0.4]{->}(0.739375,0.81984377)(1.119375,0.57984376)
\psline[linewidth=0.024cm,arrowsize=0.05291667cm 2.0,arrowlength=1.4,arrowinset=0.4]{->}(1.219375,0.33984375)(1.219375,-0.14015625)
\psline[linewidth=0.024cm,arrowsize=0.05291667cm 2.0,arrowlength=1.4,arrowinset=0.4]{->}(1.019375,-0.38015625)(0.759375,-0.64015627)
\psarc[linewidth=0.024,linestyle=dotted,dotsep=0.16cm](0.699375,0.07984375){0.8}{147.38075}{263.3675}
\usefont{T1}{ptm}{m}{n}
\rput(0.5607813,-1.5701562){$Q$}
\usefont{T1}{pcr}{m}{n}
\rput(-0.3690625,0.5498437){\tiny $\l-1$}
\usefont{T1}{pcr}{m}{n}
\rput(0.63,1.1298437){\tiny $0$}
\usefont{T1}{pcr}{m}{n}
\rput(1.3871875,0.50984377){\tiny $1$}
\usefont{T1}{pcr}{m}{n}
\rput(1.4101562,-0.35015625){\tiny $2$}
\usefont{T1}{pcr}{m}{n}
\rput(0.5921875,-0.9501563){\tiny $3$}

\psdots[dotsize=0.12](3.419375,0.9398438)
\psdots[dotsize=0.12](4.019375,0.53984374)
\psdots[dotsize=0.12](4.019375,-0.26015624)
\psdots[dotsize=0.12](3.419375,-0.66015625)
\psdots[dotsize=0.12](2.819375,0.53984374)
\psline[linewidth=0.025999999cm,arrowsize=0.05291667cm 2.0,arrowlength=1.4,arrowinset=0.4]{->}(2.939375,0.59984374)(3.339375,0.85984373)
\psline[linewidth=0.024cm,arrowsize=0.05291667cm 2.0,arrowlength=1.4,arrowinset=0.4]{->}(3.539375,0.83984375)(3.919375,0.59984374)
\psline[linewidth=0.024cm,arrowsize=0.05291667cm 2.0,arrowlength=1.4,arrowinset=0.4]{->}(4.019375,0.35984376)(4.019375,-0.12015625)
\psline[linewidth=0.024cm,arrowsize=0.05291667cm 2.0,arrowlength=1.4,arrowinset=0.4]{->}(3.919375,-0.36015624)(3.559375,-0.6201562)
\psarc[linewidth=0.024,linestyle=dotted,dotsep=0.16cm](3.499375,0.09984375){0.8}{147.38075}{263.3675}
\usefont{T1}{ptm}{m}{n}
\rput(3.3607812,-1.5701562){$Q_\infty$}
\usefont{T1}{pcr}{m}{n}
\rput(2.4089061,0.56984377){\tiny $\l-1$}
\usefont{T1}{pcr}{m}{n}
\rput(3.5748436,1.0898438){\tiny $0$}
\usefont{T1}{pcr}{m}{n}
\rput(4.1871877,0.52984375){\tiny $1$}
\usefont{T1}{pcr}{m}{n}
\rput(4.2101564,-0.33015624){\tiny $2$}
\usefont{T1}{pcr}{m}{n}
\rput(3.3921876,-0.93015623){\tiny $3$}
\psdots[dotsize=0.12](3.419375,1.7398437)
\psline[linewidth=0.024cm,arrowsize=0.05291667cm 2.0,arrowlength=1.4,arrowinset=0.4]{->}(3.419375,1.5598438)(3.419375,1.0798438)
\usefont{T1}{pcr}{m}{n}
\rput(3.7,1.7098438){\tiny $\infty$}
\usefont{T1}{pcr}{m}{n}

\psdots[dotsize=0.12](6.219375,0.91984373)
\psdots[dotsize=0.12](6.819375,0.51984376)
\psdots[dotsize=0.12](6.819375,-0.28015625)
\psdots[dotsize=0.12](6.219375,-0.68015623)
\psdots[dotsize=0.12](5.619375,0.51984376)
\psline[linewidth=0.025999999cm,arrowsize=0.05291667cm 2.0,arrowlength=1.4,arrowinset=0.4]{->}(5.699375,0.6798437)(6.099375,0.9398438)
\psline[linewidth=0.024cm,arrowsize=0.05291667cm 2.0,arrowlength=1.4,arrowinset=0.4]{->}(6.379375,0.8798438)(6.759375,0.63984376)
\psline[linewidth=0.024cm,arrowsize=0.05291667cm 2.0,arrowlength=1.4,arrowinset=0.4]{->}(6.879375,0.33984375)(6.879375,-0.14015625)
\psline[linewidth=0.024cm,arrowsize=0.05291667cm 2.0,arrowlength=1.4,arrowinset=0.4]{->}(6.739375,-0.44015625)(6.379375,-0.7001563)
\psarc[linewidth=0.024,linestyle=dotted,dotsep=0.16cm](6.299375,0.07984375){0.8}{147.38075}{263.3675}
\usefont{T1}{ptm}{m}{n}
\rput(6.1607813,-1.5701562){$\overline{Q_\infty}$}
\usefont{T1}{pcr}{m}{n}
\rput(5.238906,0.5498437){\tiny $\l-1$}
\usefont{T1}{pcr}{m}{n}
\rput(6.374844,1.0698438){\tiny $0$}
\usefont{T1}{pcr}{m}{n}
\rput(6.9871873,0.50984377){\tiny $1$}
\usefont{T1}{pcr}{m}{n}
\rput(7.010156,-0.35015625){\tiny $2$}
\usefont{T1}{pcr}{m}{n}
\rput(6.1921877,-0.9501563){\tiny $3$}
\psdots[dotsize=0.12](6.219375,1.7198437)
\psline[linewidth=0.024cm,arrowsize=0.05291667cm 2.0,arrowlength=1.4,arrowinset=0.4]{->}(6.259375,1.5398438)(6.259375,1.0598438)
\usefont{T1}{pcr}{m}{n}
\rput(6.5,1.6898438){\tiny $\infty$}
\usefont{T1}{pcr}{m}{n}
\psline[linewidth=0.024cm,arrowsize=0.05291667cm 2.0,arrowlength=1.4,arrowinset=0.4]{<-}(6.179375,1.5798438)(6.179375,1.0998437)
\psline[linewidth=0.025999999cm,arrowsize=0.05291667cm 2.0,arrowlength=1.4,arrowinset=0.4]{<-}(5.719375,0.57984376)(6.119375,0.83984375)
\psline[linewidth=0.024cm,arrowsize=0.05291667cm 2.0,arrowlength=1.4,arrowinset=0.4]{<-}(6.319375,0.7998437)(6.699375,0.5598438)
\psline[linewidth=0.024cm,arrowsize=0.05291667cm 2.0,arrowlength=1.4,arrowinset=0.4]{<-}(6.779375,0.35984376)(6.779375,-0.12015625)
\psline[linewidth=0.025999999cm,arrowsize=0.05291667cm 2.0,arrowlength=1.4,arrowinset=0.4]{->}(6.339375,-0.60015625)(6.739375,-0.34015626)
\end{pspicture} 
}

\end{center}

\nd We consider the quiver varieties related to the quiver $\overline{Q}_\infty$, following \cite[\S 3]{go:qu:08}.

\nd Let $\mathbf{R}(n):=\mat_{n}(\C)^\l\oplus\mat_{n}(\C)^\l \oplus\C^{n}\oplus(\C^{n})^*.$ The group $\G(n):=\gl_n(\C)^\l$ acts on this space in the following way : for $g=(g_0,\dots,g_{\l-1})\in\G(n)$ and $(\X,\Y;v,w)=(X_0,\dots,X_{\l-1},Y_0,\dots,Y_{\l-1}; $ $v,w)\in \mathbf R(n)$ : $$g\cdot(\X,\Y;v,w)=(g_1X_0g_0^{-1},\dots,g_0X_{\l-1}g_{\l-1}^{-1},g_0Y_0g_1^{-1},\dots,g_{\l-1}Y_{\l-1}g_0^{-1};g_0v,wg_0^{-1}).$$ Associated to some symplectic form on $\mathbf R(n)$, let $\mu_\C$ be the $\G(n)$-equivariant moment map defined by $$\mu_\mathbb{C}~:~\left\{\begin{array}{rcl}  \mathbf R(n) & \rightarrow & \lie(\G(n))\\
 (\X,\Y;v,w) & \mapsto & [\X,\Y]+vw\end{array} \right.$$ (here we identify $\lie(\G(n))$ with its dual using the trace pairing). Given $\ttheta=(\theta_0,\dots,\theta_{\l-1})\in\Q^\l$, we can introduce a complex variety :
the geometric invariant theory quotient $$\mathcal{M}_\theta(n)=\mu_\C^{-1}(0)//_\theta \G(n).$$ Let us explain this notation. 
  For $\ttheta\in\Z^\l,$ let $\chi_\theta$ be the character of $\G(n)$ defined by $\chi_\theta(g):=\ds{\prod_{i=0}^{\l-1}(\det g_i)^{\theta_i}}$ and for $\ttheta\in\Q^\l$ : $$\C[\mu_\C^{-1}(0)]^{\chi_{j\theta}}:=\{f\in\C[\mu_\C^{-1}(0)],~\forall g\in \G(n),~g\cdot f=\chi_{j\theta}(g)f\}$$ if $j\ttheta\in\Z^\l$ and $\C[\mu_\C^{-1}(0)]^{\chi_{j\theta}}:=0$ if not.
 The variety $\mathcal{M}_\theta(n)$ is defined by 
    $$\mathcal{M}_\theta(n):=\proj \bigoplus_{j\geqslant 0}\C[\mu_\C^{-1}(0)]^{\chi_{j\theta}}$$ and is projective over $\mathcal{M}_0(n)\simeq(\cat{h}\times\cat{h}^*)/W$ (see \cite[\S3.9]{go:qu:08}). Therefore, when $\mathcal{M}_\theta(n)$ is smooth, this gives a symplectic resolution of this singular quotient variety. An other link between these varieties and the algebraic problems we consider is the fact that, if $\l=1$, then the Cherednik algebra $H_{\h}(W)$ provides a quantization of the Hilbert scheme of $n$ points on the plane, the relevant quiver variety in this special case.\\
    
Let us now describe some regularity properties of these varieties. The following result was proved by \cite[4.4]{go:qu:08}.

\begin{Lem}[Gordon]\label{murs-git} Let $\h=(h,H_1,\dots,H_{\l-1})\in\Q^\l$ and set $\ttheta=(-h+H_0,H_1,\dots,H_{\l-1})$, where $H_0=-(H_1+\cdots+H_{\l-1})$.
Then the variety $\M_{\theta}(n)$ is smooth if $\ttheta$does not lie on one of the following hyperplanes $$h=0 \text{ or } (H_i+\cdots+H_j)+mh=0$$ where $1\lq i\lq j\lq \l-1$ et $1-n\lq m\lq n-1$.
\end{Lem}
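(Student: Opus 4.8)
The plan is to apply the standard smoothness criterion for Nakajima quiver varieties to the doubled framed cyclic quiver $\overline{Q_\infty}$ with dimension vector taking value $1$ at $\infty$ and $n$ at each vertex $0,\dots,\l-1$ of $Q$, and then to translate the resulting ``no wall'' condition into the stated hyperplanes. Recall $\M_\theta(n)=\mu_\C^{-1}(0)//_\theta\G(n)$. I would first invoke King's criterion: after extending $\theta$ to $\infty$ by $\theta_\infty:=-n\sum_{i=0}^{\l-1}\theta_i$, so that $\dim M$ has total $\theta$-weight $0$, a point $(\X,\Y;v,w)\in\mu_\C^{-1}(0)$ is $\chi_\theta$-semistable (resp.\ stable) exactly when every proper nonzero subrepresentation $M'$ of the associated representation $M$ of $\overline{Q_\infty}$ satisfies $\theta\cdot\dim M'\lq 0$ (resp.\ $<0$). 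There are then two points to settle: (i) for $\theta$ off the listed hyperplanes there are no strictly $\theta$-semistable points, and (ii) on the $\theta$-stable locus $\G(n)$ acts with stabilizer the central $\C^*$, so that $\M_\theta(n)$ is a smooth geometric quotient.

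Point (ii) is the routine part, exactly as in Nakajima's treatment of general quiver varieties. A $\theta$-stable representation is a brick, by Schur's lemma in the abelian category of weight-$0$ $\theta$-semistable representations, so its stabilizer in $\G(n)$ is the central $\C^*$; at such a point $\mu_\C^{-1}(0)$ is smooth (the moment map for the $\G(n)$-action on $\mathbf R(n)$ is a submersion modulo the center wherever the stabilizer is no larger than $\C^*$). Hence $\mu_\C^{-1}(0)^{\theta\text{-st}}$ is a smooth variety on which $\G(n)/\C^*$ acts freely and properly, and $\M_\theta(n)$, being the quotient, is smooth of the expected dimension.

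The substance is point (i), which is combinatorial. A strictly $\theta$-semistable point would produce a proper nonzero $M'\subset M$ with $\theta\cdot\dim M'=0$; refining $M'$ (or $M/M'$) by a composition series in the weight-$0$ semistable category and using that $M$ is a module over the preprojective algebra (the equation $\mu_\C=0$), one reduces to a $\theta$-stable simple factor whose dimension vector $\alpha$ is a positive root of the quiver. Since $\dim M$ has $\infty$-component $1$, this factor must have $\infty$-component $0$, so $\alpha=(0;a_0,\dots,a_{\l-1})$ with $0\lq a_i\lq n$ is a positive root of the affine quiver $\widetilde A_{\l-1}$ with $\theta\cdot\alpha=0$. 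The positive roots of $\widetilde A_{\l-1}$ are, up to translation by multiples of $\delta=(1,\dots,1)$, the ``interval'' vectors $e_i+e_{i+1}+\cdots+e_j$ supported on a cyclic arc, together with the imaginary roots $m\delta$. Substituting $\theta_k=H_k$ for $k\gq1$ and $\theta_0=-h+H_0$ with $H_0=-(H_1+\cdots+H_{\l-1})$, I would record $\sum_k\theta_k=-h$, so $\theta\cdot(m\delta)=-mh$, and for an interval root translated by $m\delta$,
$$\theta\cdot\bigl(e_i+\cdots+e_j+m\delta\bigr)\ \in\ \bigl\{\,\pm(H_i+\cdots+H_j)-mh,\ \ -h\pm(H_i+\cdots+H_j)-mh\,\bigr\},$$
with $1\lq i\lq j\lq\l-1$, while $\alpha\lq(n,\dots,n)$ forces $|m|\lq n-1$. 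Setting these expressions equal to $0$ yields exactly $h=0$ (from the imaginary roots) or $(H_i+\cdots+H_j)+m'h=0$ with $1\lq i\lq j\lq\l-1$ and $1-n\lq m'\lq n-1$. So if $\theta$ avoids all these hyperplanes then $\theta\cdot\alpha=0$ is impossible, there are no strictly semistable points, and combining this with (ii) gives smoothness.

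I expect the main obstacle to be the bookkeeping in point (i): one must verify that every relevant root-type dimension vector genuinely occurs as that of a subrepresentation or quotient compatible with both the framing and the moment-map equation, and pin down the exact range $1-n\lq m'\lq n-1$ --- this is where ``dimension $n$ at each cyclic vertex'' is used quantitatively, and where one has to check that roots with $\infty$-component $1$ contribute nothing new, their complement always being an unframed root already on the list.
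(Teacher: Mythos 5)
The paper offers no proof of this lemma: it is quoted verbatim from \cite[4.4]{go:qu:08}, and Gordon's argument there is exactly the Nakajima--King strategy you describe, so your route is the intended one. Your point (ii) is fine (stable implies brick, the moment map is submersive there, and the quotient of the stable locus is a smooth geometric quotient). The problem is in point (i), and it is precisely the bookkeeping you flag at the end but do not carry out: as written, your computation does not produce the stated hyperplane list. The positive roots of $\widetilde A_{\l-1}$ below $n\delta$ are $m\delta$ with $1\lq m\lq n$ and $e_I+m\delta$ with $I$ a nonempty proper \emph{cyclic} interval and $0\lq m\lq n-1$ (negative $m$ gives negative roots, so your ``$|m|\lq n-1$'' is off). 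Pairing with $\theta$ gives $-mh$ for the imaginary roots and, for the real ones, either $(H_i+\cdots+H_j)-mh$ when $0\notin I$ or $-(H_{i'}+\cdots+H_{j'})-(m+1)h$ when $0\in I$; only two of your four sign combinations actually occur. Setting these to zero yields $h=0$ together with $(H_i+\cdots+H_j)+m'h=0$ for $1-n\lq m'\lq n$, i.e.\ one \emph{extra} family $m'=n$ beyond the lemma. So the naive criterion ``no positive root $\lq n\delta$ of $\theta$-degree zero'' proves smoothness only off a strictly larger arrangement, which is weaker than the statement you are asked to prove.

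To recover the stated range you must use the framing, which is the step you defer. The offending roots are $\beta=e_I+(n-1)\delta$ with $0\in I$: their complement in the total dimension vector is $(1;n,\dots,n)-(0;\beta)=(1;e_{I^c})$, whose component at the vertex $0$ vanishes. Since $\infty$ is joined only to $0$ in $Q_\infty$, the unique positive root of $Q_\infty$ with $\infty$-component $1$ lying below $(1;e_{I^c})$ is the simple root at $\infty$, whose $\theta$-degree is $\theta_\infty=nh\neq 0$ once $h\neq 0$. Hence no Jordan--H\"older decomposition of $(1;n,\dots,n)$ into $\theta$-degree-zero roots can contain such a $\beta$, these roots contribute no wall, and the arrangement collapses to $1-n\lq m'\lq n-1$ as claimed. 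Until this connectivity/framing argument is actually written out (and the root enumeration above corrected), the proposal establishes a genuinely weaker statement than the lemma.
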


\nd We denote by $\H^\text{reg}$ the open subset of $\H:=\{\h=(h,H_1,\dots,H_{\l-1})\in\Q^\l\}$ obtained by removing the hyperplanes occurring in this lemma. Abusing terminology, we will call them the \textit{GIT walls} and consider that they define the \textit{GIT chambers} inside of which the corresponding varieties are smooth and isomorphic. 

\vspace{0,6cm}

\begin{center}

\begin{pspicture}(0,0)(12,2.3) 

\psellipticarc[linestyle=dashed](9,0.1)(1.5,0.25){0}{180} 
\psellipticarc(9,0.1)(1.5,0.25){180}{360} 
\psellipse(9,2.3)(1.5,0.25) 
\psline(7.5,2.25)(10.5,0.1) 
\psline(10.5,2.25)(7.5,0.1) 

\psellipticarc[linestyle=dashed](3,0.1)(1.5,0.25){0}{180}
\psellipticarc(3,0.1)(1.5,0.25){180}{360}
\psellipse(3,2.3)(1.5,0.25)
\pscurve(1.5,2.25)(2.2,1.175)(1.5,0.1)
\pscurve(4.5,2.25)(3.8,1.175)(4.5,0.1)
\psellipticarc[linestyle=dashed](3,1.175)(0.8,0.125){0}{180}
\psellipticarc(3,1.175)(0.8,0.125){180}{360}

\psline{->}(5,1.175)(7.5,1.175)


\rput(6,-0.6){$\M_{\theta}(1)$ and $\M_0(1)\simeq \C^2/\mu_2$, for $\h\in\H^\text{reg}$ and $W=G(2,1,1)$.} 
\end{pspicture}

\vspace{0,8cm}

\end{center}

\nd Let $\ttheta$ and $\ttheta'$ be two parameters such that : \vspace{-0,5cm}
 \begin{center}
            \colorbox{gris25}{
                    \fbox{\begin{minipage}{0,977\textwidth}  \begin{itemize}
\item $\ttheta'$ is on a GIT wall
\item $\ttheta$ belongs to a chamber in front of this wall 
\end{itemize} \end{minipage}
                    }
            }
    \end{center}
then, according to the following result, the variety $\M_{\theta}(n)$ is a resolution of singularities of $\M_{\theta'}(n)$.

\begin{Th}
There exists a projective and surjective morphism $\pi_{\theta,\theta'}~:~\mathcal{M}_\theta(n)\rightarrow \mathcal{M}_{\theta'}(n).$
\end{Th}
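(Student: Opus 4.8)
The plan is to construct the morphism $\pi_{\theta,\theta'}$ via variation of GIT, using the fact that the chamber containing $\ttheta$ lies ``in front of'' the wall on which $\ttheta'$ sits. Concretely, both $\M_{\theta}(n)$ and $\M_{\theta'}(n)$ are GIT quotients of the same affine variety $\mu_\C^{-1}(0)$ by the same reductive group $\G(n)$, for linearizations given by the characters $\chi_\theta$ and $\chi_{\theta'}$. The standard VGIT picture (cf. Thaddeus, Dolgachev--Hu) says: whenever a stability parameter $\ttheta$ degenerates to a parameter $\ttheta'$ on the boundary of its chamber, one has $\chi_\theta$-semistability $\Rightarrow$ $\chi_{\theta'}$-semistability (the degeneration can only enlarge the semistable locus), and this inclusion of semistable loci is $\G(n)$-equivariant, hence descends to a morphism of quotients. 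So the first step is to make precise the phrase ``$\ttheta$ belongs to a chamber in front of this wall'': I would take it to mean that $\ttheta'$ lies in the closure of the GIT chamber of $\ttheta$, equivalently that for all $j\gq 0$ a positive power of any $\chi_{j\theta'}$-semi-invariant section extends the semistable locus, i.e. $\mu_\C^{-1}(0)^{\chi_\theta\text{-ss}}\subseteq \mu_\C^{-1}(0)^{\chi_{\theta'}\text{-ss}}$.

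Granting that inclusion, the construction is the following. The graded ring $R_{\theta'}:=\bigoplus_{j\gq 0}\C[\mu_\C^{-1}(0)]^{\chi_{j\theta'}}$ sits inside $R_\theta:=\bigoplus_{j\gq 0}\C[\mu_\C^{-1}(0)]^{\chi_{j\theta}}$ after passing to a common sub-semigroup of characters (replacing $\ttheta,\ttheta'$ by suitable integral multiples along the ray), and both are finitely generated $\C$-algebras over $R_0=\C[\mu_\C^{-1}(0)]^{\G(n)}=\C[(\cat h\times\cat h^*)/W]$. Taking $\proj$ of the inclusion $R_{\theta'}\hookrightarrow R_\theta$ — more precisely, of the inclusion of the Veronese subalgebras generated in degrees that are multiples of a common $d$ — yields a rational map which, because the $\chi_\theta$-semistable locus is contained in the $\chi_{\theta'}$-semistable locus, is in fact a well-defined morphism $\pi_{\theta,\theta'}\colon\M_\theta(n)\to\M_{\theta'}(n)$: every point of $\M_\theta(n)$ is represented by a $\chi_\theta$-semistable orbit, which is also $\chi_{\theta'}$-semistable, hence has a well-defined image. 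Projectivity and surjectivity then come for free: both varieties are projective over the affine base $\M_0(n)$ (this is exactly \cite[\S3.9]{go:qu:08}, quoted in the excerpt), so $\pi_{\theta,\theta'}$ is a morphism between two projective $\M_0(n)$-schemes and is therefore projective; and it is surjective because it is a projective (hence closed) morphism whose image contains the dense locus of closed $\chi_{\theta'}$-semistable orbits that are also $\chi_\theta$-stable — alternatively, surjectivity onto $\M_{\theta'}(n)$ follows since both map onto all of $\M_0(n)$ and $\pi_{\theta,\theta'}$ commutes with the two structure maps to $\M_0(n)$, forcing the image to be a closed subvariety surjecting onto $\M_0(n)$, which by irreducibility and dimension count must be everything.

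The main obstacle, and the step deserving the most care, is justifying the inclusion of semistable loci $\mu_\C^{-1}(0)^{\chi_\theta\text{-ss}}\subseteq\mu_\C^{-1}(0)^{\chi_{\theta'}\text{-ss}}$ from the hypothesis that $\ttheta'$ is on a wall bounding the chamber of $\ttheta$. This is where one must invoke the King-type description of $\chi_\theta$-(semi)stability for quiver representations with the dimension vector $(n,\dots,n;1)$ of $\overline{Q_\infty}$: a point $(\X,\Y;v,w)\in\mu_\C^{-1}(0)$ is $\chi_\theta$-semistable iff every subrepresentation $S$ satisfies the numerical condition $\theta(\dim S)\gq 0$ (with the convention fixing the value on the whole space to be $0$), and strictly positive for proper nonzero $S$ for stability. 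The wall-and-chamber structure on the space of $\ttheta$'s is cut out precisely by the finitely many hyperplanes $\{\theta(\mathbf d)=0\}$ for dimension vectors $\mathbf d$ of subrepresentations — and one checks these match the hyperplanes of Lemma~\ref{murs-git}. Then ``$\ttheta$ in a chamber with $\ttheta'$ on its boundary wall'' means: on every such hyperplane-side inequality, $\ttheta$ is strictly positive wherever $\ttheta'$ is positive, and where $\ttheta'=0$ (on the wall itself) the corresponding subrepresentations impose no constraint for $\ttheta'$; hence any $\chi_\theta$-semistable point automatically satisfies all the $\chi_{\theta'}$-inequalities. Once this combinatorial comparison of stability conditions is in hand, the rest is the formal $\proj$/VGIT machinery sketched above. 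I would also remark that when $\M_\theta(n)$ is smooth (which holds by Lemma~\ref{murs-git} since $\ttheta\in\H^{\mathrm{reg}}$), $\pi_{\theta,\theta'}$ is a resolution of singularities, as asserted just before the statement.
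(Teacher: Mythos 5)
Your construction of the morphism and your projectivity argument match the paper's in substance: the paper also obtains $\pi_{\theta,\theta'}$ from the inclusion of semistable loci $\mu_\C^{-1}(0)^{\theta\text{-}ss}\subset\mu_\C^{-1}(0)^{\theta'\text{-}ss}$ (it simply cites Reineke and Nakajima \cite[2.12]{na:qu:09} for this, where you rederive it from King's numerical criterion and the wall-and-chamber combinatorics), and it also gets projectivity from the commuting triangle over $\M_0(n)$ using that $\pi_{\theta,0}$ and $\pi_{\theta',0}$ are projective. Where you genuinely diverge is surjectivity. The paper invokes Nakajima's stratification result \cite[2.24]{na:qu:09} that for a quiver of affine type every stratum of $\M_{\theta'}(n)$ is ``relevant,'' i.e.\ lies in the image; this is the non-formal input. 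Your first surjectivity argument --- that the image contains a \emph{dense} locus of closed $\chi_{\theta'}$-orbits which are also $\chi_\theta$-stable --- assumes exactly what is at stake (density of that locus is essentially equivalent to surjectivity and can fail for general quivers), so on its own it is not a proof. Your alternative argument, however, is a legitimately different and workable route: the image is closed (projectivity) and surjects onto $\M_0(n)$, and since $\M_{\theta'}(n)$ is irreducible (because $\mu_\C^{-1}(0)$ is irreducible by Crawley-Boevey/Gan--Ginzburg for this dimension vector) with $\dim\M_{\theta'}(n)=\dim\M_0(n)$ (because $\pi_{\theta',0}$ is an isomorphism over the locus of $0$-stable points, hence generically finite), the image must be everything. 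You should state and source those two facts explicitly --- ``by irreducibility and dimension count'' is doing real work --- but granting them, your argument is more elementary and avoids Nakajima's stratification theory entirely, at the price of being special to situations where $\pi_{\theta',0}$ is generically finite onto an irreducible base.
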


\begin{proof}
Some results of Nakajima give others descriptions of quiver varieties. Indeed, \cite[3.3]{re:mo:08} and \cite[2.9 (1)]{na:qu:09} claim that $\M_\theta(n)=\mu_\C^{-1}(0)^{\theta\text{-}s}/\G(n)$ and $\M_{\theta'}(n)={\mu_\C^{-1}(0)}^{\theta'\text{-}ss}//_{\theta'}\G(n)$ (for the definitions of stability and semistability, see \cite[2.4 (2)]{na:qu:09}). Moreover \cite[2.12 (1) and (3)(a)]{na:qu:09} show that ${\mu_\C^{-1}(0)}^{\theta\text{-}s}={\mu_\C^{-1}(0)}^{\theta\text{-}ss}\subset{\mu_\C^{-1}(0)}^{\theta'\text{-}ss}$, hence the morphism $\pi_{\theta,\theta'}~:~\mathcal{M}_\theta(n)\rightarrow \mathcal{M}_{\theta'}(n)$ exists. This morphism is such that the diagram $$\xymatrix{\mathcal{M}_\theta(n) \ar[rr]^{\pi_{\theta,\theta'}} \ar[rd]_{\pi_{\theta,0}} & & \mathcal{M}_{\theta'}(n) \ar[ld]^{\pi_{\theta',0}} \\ & \mathcal{M}_0(n)\simeq (\cat{h}\times\cat{h}^*)/W & }$$ commutes. Therefore, since $\pi_{\theta,0}$ and $\pi_{\theta',0}$ are projective, $\pi_{\theta,\theta'}$ is projective. For the surjectivity,  \cite[2.24]{na:qu:09} claims that all strata in $\pi_{\theta,\theta'}(\M_\theta(n))$ are relevant in $\M_{\theta'}(n)$ for a quiver of affine type.
\end{proof}

\nd For parameters $\h'\in\H-\H^\text{reg}$ and $\ttheta'=(-h+H_0,H_1,\dots,H_{\l-1})$, we do not know if $\M_{\theta'}(n)$ is regular but the next result shows that this variety is normal.

\begin{prop}\label{normale}
For all $\h\in\H$ and $\ttheta=(-h+H_0,H_1,\dots,H_{\l-1})$, $\mathcal{M}_{\theta}(n)$ is a normal variety.
\end{prop}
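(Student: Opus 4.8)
The quiver variety $\M_\theta(n)$ is by construction $\proj\bigoplus_{j\ge 0}\C[\mu_\C^{-1}(0)]^{\chi_{j\theta}}$, a GIT quotient of the affine scheme $\mu_\C^{-1}(0)$ by the reductive group $\G(n)=\gl_n(\C)^\l$. The strategy is to reduce normality of this quotient to normality of $\mu_\C^{-1}(0)$ itself, using the standard fact that a GIT quotient of a normal variety by a reductive group is normal (invariant rings of normal rings are normal, since being integrally closed descends to the subring of invariants — if $f\in\Frac(R^G)$ is integral over $R^G$, it is integral over $R$, hence lies in $R$, hence in $R\cap\Frac(R^G)=R^G$; this passes through the Rees-algebra/Proj construction as well since each graded piece $\C[\mu_\C^{-1}(0)]^{\chi_{j\theta}}$ is a module over the normal ring $\C[\mu_\C^{-1}(0)]^{\G(n)}$ sitting inside the normal ring $\C[\mu_\C^{-1}(0)]$). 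So the real content is:

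First I would show that $\mu_\C^{-1}(0)$, the fiber of the moment map $\mu_\C(\X,\Y;v,w)=[\X,\Y]+vw$ over $0$ inside $\mathbf R(n)$, is a normal variety (more precisely a normal complete intersection, or at least normal on each relevant component). For quiver varieties attached to the affine type $\widetilde A_{\l-1}$ quiver $\overline{Q_\infty}$ this is known: one invokes the results of Crawley-Boevey on the geometry of $\mu_\C^{-1}(0)$ for deformed preprojective algebras, or equivalently Nakajima's results, which give that $\mu_\C^{-1}(0)$ is a reduced, irreducible complete intersection of the expected dimension (the dimension vector $(n,\dots,n;1)$ is a positive root / lies in the fundamental region for this affine quiver, so Crawley-Boevey's criterion applies), and that its singular locus has codimension $\ge 2$. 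Combined with the complete-intersection (hence Cohen–Macaulay, hence $S_2$) property and the codimension bound ($R_1$), Serre's criterion gives normality of $\mu_\C^{-1}(0)$.

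Then I would invoke the descent of normality through the GIT quotient: since $\G(n)$ is reductive and acts on the normal affine variety $\mu_\C^{-1}(0)$, the variety $\M_\theta(n)=\proj\bigoplus_{j\ge0}\C[\mu_\C^{-1}(0)]^{\chi_{j\theta}}$ is normal — one checks this affine-locally on a cover of $\M_\theta(n)$ by the standard opens $\{f\ne 0\}$ for $f$ a $\chi_{j\theta}$-semiinvariant, on which the coordinate ring is the degree-$0$ part of a localization of a normal ring, hence normal. Alternatively, quote directly that $\M_\theta(n)=\mu_\C^{-1}(0)^{\theta\text{-}ss}/\!/_\theta\G(n)$ as in the proof of the preceding theorem, and use that a good quotient of a normal variety by a reductive group is normal.

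**Main obstacle.** The delicate point is establishing normality of $\mu_\C^{-1}(0)$ itself — it is not a priori obvious that the moment-map fiber is irreducible, reduced, of the expected dimension, and $R_1$. This is exactly where one must appeal to the structure theory of preprojective algebras for affine quivers (Crawley-Boevey's dimension formulas and the fact that $\dim$-vectors in the fundamental region give irreducible $\mu^{-1}(0)$ which is a normal complete intersection), rather than anything elementary; everything after that (descent through GIT) is formal. One subtlety to be careful about: the variety here is built from $\overline{Q_\infty}$ with the extra framing vertex $\infty$ and dimension $1$ there, so one should cite the framed version of Crawley-Boevey's results (equivalently Nakajima's), and note that $\theta$-stability already appears in the proof of the previous theorem so the identification $\M_\theta(n)=\mu_\C^{-1}(0)^{\theta\text{-}ss}/\!/_\theta\G(n)$ is available.
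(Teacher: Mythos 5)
Your proof is correct, but it reaches normality of $\M_\theta(n)$ by a genuinely different route from the paper. The paper takes from \cite{cr:no:03} only the normality of the affine quotient $\M_0(n)$, splits off the affine factor $M^{\text{el}}$ to get normality of $\M_0^{\text{norm}}$, and then invokes Nakajima's local structure theorem (\'etale-locally, $\M_\theta(n)\simeq \hat{\M}_0^{\text{norm}}\times T$ with $T$ smooth) to transport normality to the non-affine quotient; the delicate point there is that the local models are themselves affine quiver-variety quotients to which Crawley-Boevey applies again. You instead work one level up: you extract from \cite{cr:no:03} the normality of the moment-map fibre $\mu_\C^{-1}(0)$ itself (reduced irreducible complete intersection, hence $S_2$, plus the codimension bound on the singular locus, hence $R_1$ and Serre's criterion), and then push normality down through the GIT quotient by the reductive group $\G(n)$ via the affine charts $D_+(f)=\spec\bigl((\C[\mu_\C^{-1}(0)]_f)^{\G(n)}\bigr)$ of the Proj. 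Both key inputs live in the same reference --- Crawley-Boevey proves normality of the fibre as the main step towards normality of the Marsden--Weinstein reduction --- so your argument is legitimate, and it buys uniformity: it treats all $\theta$ at once and avoids the local-model analysis, which in the paper rests on the somewhat informal assertion that $\hat{\M}_0^{\text{norm}}$ is ``very close to'' $\M_0^{\text{norm}}$. What the paper's route buys in exchange is that it never needs irreducibility or normality of $\mu_\C^{-1}(0)$ itself, only of the quotients. The one hypothesis you rightly flag must indeed be checked, namely that the framed dimension vector $(1,n\delta)$ satisfies Crawley-Boevey's condition $\alpha\in\Sigma_0$ for the quiver $\overline{Q_\infty}$; this is verified in \cite{go:qu:08} and is exactly what makes $\mu_\C^{-1}(0)$ irreducible, so that the localizations $\C[\mu_\C^{-1}(0)]_f$ are normal domains and their invariant rings are again integrally closed.
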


\begin{proof}
According to \cite[1.1]{cr:no:03}, we know that $\M_0(n)$ is normal. But, \cite[(2.3)]{na:qu:09} shows that :
$\mathcal{M}_0(n)=M^\text{el}\times\mathcal{M}_0^\text{norm}$, where $M^\text{el}$ is an affine space. Thus $\mathcal{O}_{\mathcal{M}_0}=\mathcal{O}_{\mathcal{M}_0^\text{norm}}[X_1,\dots,X_n]$. Combining this with the fact that $\mathcal{O}_{\mathcal{M}_0}$ is integrally closed, we obtain that ${\mathcal{M}_0^\text{norm}}$ is a normal variety.
Moreover, \cite[\S 2.7]{na:qu:09} shows that $\mathcal{M}_{\theta}(n)$ is locally isomorphic to $\hat{\mathcal{M}}_0^\text{norm}\times T$ where $\hat{\mathcal{M}}_0^\text{norm}$ is normal (because this variety is very close to  ${\mathcal{M}_0^\text{norm}}$: only the spaces $(V,W)$ change) and  $T$ is the product of a vector space with an affine space. Therefore $\M_{\theta}(n)$ is a normal variety. \end{proof}

\end{abs}


\section{Geometric ordering on $CM_{\h}(G(\l,1,n))$ for $\h\in\H$.}

Throughout this section we will assume that $\h$ and $\ttheta$ are related by the usual rule  $\ttheta=(-h+H_0,H_1,\dots,H_{\l-1})$. Using the two previous sections, we are now able to give a geometric description of $CM_\h(G(\l,1,n))$, thanks to the quiver varieties $\M_\theta(n)$, and we can build a geometric ordering on $CM_\h(G(\l,1,n))$ for all $\h\in\Q^\l$. Then we will relate it with combinatorial and algebraic orders already defined on $\mathcal{P}(\l,n)$.

\begin{abs}{\bf Geometric description of $CM_{\h}(G(\l,1,n))$ for $\h\in\H$.}\label{geo-des} \hspace{0,05cm} There is a $\C^*$-action on $\M_\theta(n)$ induced by the following hyperbolic action on $\mathbf R(n)$ : $\eta\cdot(\X,\Y;v,w)=(\eta\X,\eta^{-1}\Y;v,w)$.

\begin{Th}[Gordon]
 Let $\h=(h,H_1,\dots,H_{\l-1})\in\H$. Then the $\C^*$-fixed points on $\M_\theta(n)$ are naturally labeled by the Calogero-Moser blocks of $G(\l,1,n)$. Moreover, if $\H\in\H^\text{reg}$ then the blocks are singletons.
\end{Th}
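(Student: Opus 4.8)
**Proof proposal for the $\C^*$-fixed points theorem.**

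The plan is to identify the $\C^*$-fixed points of $\M_\theta(n)$ by transporting the problem to the smooth case, where Nakajima-style fixed-point analysis is available, and then using the morphism $\pi_{\theta,\theta'}$ together with Lemma \ref{lem1} to descend to the general (possibly singular) case. First I would recall that in a GIT chamber the variety $\M_\theta(n)$ is smooth, and that Gordon already established (for $\h\in\H^{\text{reg}}$) a bijection between $\M_\theta(n)^{\C^*}$ and $CM_\h(G(\l,1,n))$; in that case the Calogero-Moser blocks are singletons, so the fixed points are literally indexed by $\mathcal P(\l,n)=\irr W$. This is the base case. The combinatorial heart of it, which I would invoke rather than reprove, is that the $\C^*$-fixed locus of the quiver variety consists of the representations supported at the origin of $\cat h\times\cat h^*$, whose classification (via the hyperbolic action $\eta\cdot(\X,\Y;v,w)=(\eta\X,\eta^{-1}\Y;v,w)$ forcing $\X$ and $\Y$ to be simultaneously ``graded nilpotent'') produces exactly the $\l$-multipartitions of $n$.

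Next I would treat a general $\h\in\H$, i.e.\ $\ttheta'=(-h+H_0,H_1,\dots,H_{\l-1})$ possibly on a GIT wall. Choose $\ttheta$ in a chamber in front of that wall, so by the Theorem preceding Proposition \ref{normale} there is a projective surjective $\C^*$-equivariant morphism $\pi_{\theta,\theta'}\colon\M_\theta(n)\to\M_{\theta'}(n)$, and $\M_{\theta'}(n)$ is normal by Proposition \ref{normale}. By Lemma \ref{lem1}, $\pi_{\theta,\theta'}(\M_\theta(n)^{\C^*})=\M_{\theta'}(n)^{\C^*}$, so the fixed points downstairs are obtained by collapsing the fixed points upstairs along the fibres of $\pi_{\theta,\theta'}$. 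The claim ``the $\C^*$-fixed points of $\M_{\theta'}(n)$ are naturally labeled by the Calogero-Moser blocks'' then amounts to showing that two multipartitions $\llambda,\mmu$ map to the same point of $\M_{\theta'}(n)^{\C^*}$ precisely when $E_\lambda$ and $E_\mu$ lie in the same block $CM_\h(W)$. For this I would match the two descriptions of $CM_\h(W)$: on the algebraic side, the block structure of $\overline{H_\h(W)}$ is detected by the central characters through which $\C[\cat h]^W\otimes\C[\cat h^*]^W$-or rather the relevant quotient-acts; on the geometric side, $\M_0(n)\simeq(\cat h\times\cat h^*)/W$ and the composite $\M_\theta(n)\to\M_0(n)$ sends all fixed points to the image of the origin, while the intermediate $\M_{\theta'}(n)$ separates them exactly according to the wall $\ttheta'$ sits on. Gordon's computation in \cite{go:qu:08} identifies the fibre of $\pi_{\theta',0}$ over the origin, stratified by $\C^*$-fixed points, with the blocks; I would cite that identification and check it is compatible with $\pi_{\theta,\theta'}$ via the commuting triangle over $\M_0(n)$.

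The main obstacle, and the step deserving the most care, is the naturality and well-definedness of the labeling on the \emph{singular} variety $\M_{\theta'}(n)$: a priori the set-theoretic fixed locus could behave badly, and one must know that collapsing $\M_\theta(n)^{\C^*}$ along $\pi_{\theta,\theta'}$ yields exactly the block partition and not some coarser or finer partition. I would resolve this by noting that the partition of $\M_\theta(n)^{\C^*}$ induced by the fibres of $\pi_{\theta,\theta'}$ is independent of the auxiliary chamber chosen (any two chambers in front of the wall give the same $\M_{\theta'}(n)$, hence the same fibres on the common image), and by invoking the dictionary between GIT walls for the quiver variety and the hyperplane arrangement governing the Calogero-Moser partition-this is where Lemma \ref{murs-git} and the wall/chamber terminology introduced after it do the real work, since the walls of $\H$ are precisely the parameters where blocks merge. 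The ``moreover'' clause is then immediate: if $\h\in\H^{\text{reg}}$ then $\ttheta$ lies in no GIT wall, $\M_\theta(n)$ is smooth, no collapsing occurs, and each block is a singleton, recovering Gordon's original statement.
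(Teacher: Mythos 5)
There is a genuine gap. The entire content of this theorem is the bridge between the geometric side (the $\C^*$-fixed points of the quiver variety $\M_\theta(n)$) and the algebraic side (the blocks of $\overline{H_\h(W)}$), and that bridge is the $\C^*$-equivariant homeomorphism $\M_\theta(n)\approx \spec Z(H_\h(W))$ with the Calogero-Moser space. This is exactly what the paper (following \cite[5.1]{go:qu:08}) relies on: on the Calogero-Moser space the fixed points are precisely the closed points of the finite fibre over $0\in(\cat h\times\cat h^*)/W$, and these correspond to the primitive central idempotents of $Z(H_\h(W))/\mathfrak m Z(H_\h(W))$, hence to the blocks of $\overline{H_\h(W)}$ --- and this works uniformly for all $\h\in\H$, singular or not. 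Your proposal never invokes this homeomorphism. The descent machinery you set up ($\pi_{\theta,\theta'}$ projective, surjective, $\C^*$-equivariant, plus Lemma \ref{lem1} giving $\pi_{\theta,\theta'}(\M_\theta(n)^{\C^*})=\M_{\theta'}(n)^{\C^*}$) only shows that the fixed points of the singular variety induce \emph{some} partition of $\mathcal P(\l,n)$ by collapsing the fixed points of the resolution along fibres. Nothing in your argument identifies that partition with $CM_\h(W)$: at the decisive moment you write that you would ``cite Gordon's identification of the fibre of $\pi_{\theta',0}$ over the origin, stratified by $\C^*$-fixed points, with the blocks,'' which is the statement being proved, so the argument is circular there. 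Likewise, your appeal to Lemma \ref{murs-git} and ``the walls are precisely where blocks merge'' presupposes the dictionary between GIT walls and the singular locus of the Calogero-Moser space, which again only comes out of the homeomorphism.

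A secondary point: even your ``Moreover'' clause is not quite self-contained. Smoothness of $\M_\theta(n)$ for $\h\in\H^{\text{reg}}$ and the count $|\M_\theta(n)^{\C^*}|=|\mathcal P(\l,n)|$ live entirely on the quiver side; to conclude that the blocks are singletons you must compare this count with the number of blocks, which is the number of $\C^*$-fixed points of $\spec Z(H_\h(W))$ --- once more the homeomorphism. To repair the proof, state the homeomorphism as the key input (it is the content of \cite[5.1]{go:qu:08}), derive the bijection between fixed points and blocks from the finiteness of the zero-fibre of $\spec Z(H_\h(W))\to(\cat h\times\cat h^*)/W$ and the block theory of $\overline{H_\h(W)}$, and only then, if you wish, use $\pi_{\theta,\theta'}$ to relate the labelings for regular and singular parameters.
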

\nd For the proof, based on the existence of a $\C^*$-equivariant homeomorphism between $\M_{\theta}(n)$ and the Calogero-Moser space $\spec Z(H_{\h}(W))$, we refer the reader to \cite[5.1]{go:qu:08}.

\vspace{0,6cm}

\hspace{4,5cm}\begin{pspicture}(0,0)(12,2.3) 

\psellipticarc[linestyle=dashed](3,0.1)(1.5,0.25){0}{180}
\psellipticarc(3,0.1)(1.5,0.25){180}{360}

\psellipse(3,2.3)(1.5,0.25)

\pscurve(1.5,2.25)(2.2,1.175)(1.5,0.1)
\pscurve(4.5,2.25)(3.8,1.175)(4.5,0.1)

\psellipticarc[linestyle=dashed](3,1.175)(0.8,0.125){0}{180}
\psellipticarc(3,1.175)(0.8,0.125){180}{360}

\rput(2.2,1.175){$\textcolor{gray}\bullet$} 
\rput(3.8,1.175){\large$\bullet$} 


\rput(3,-0.6){The two fixed points of $\M_{\theta}(1)$, for $W=G(2,1,1)$ and $\h\in\H^\text{reg}$.} 
\end{pspicture}


        
    \vspace{1cm}

 \end{abs}

 \begin{abs}{\bf Definition.} \hspace{0,05cm}  According to Theorem \ref{para1} and Proposition \ref{qu-var}, for all $\h\in\H$, the transitive relation generated by the rule $$\llambda\prec_{\h} \mmu~\Longleftrightarrow ~
\overline{\mathcal{Z}_\theta(n)_{x_\theta(\lambda)}}\cap \mathcal{Z}_\theta(n)_{x_\theta(\mu)}\neq \emptyset, $$ is a partial order, where $\mathcal{Z}_\theta(n):=\pi_{\theta,0}^{-1}\Big((\cat{h}\times\{0\})/W\Big)$ and $\mathcal{Z}_\theta(n)_{x_\theta(\lambda)}:=\{z\in\mathcal{Z}_\theta(n),~\lim_{\eta\rightarrow 0} \eta\cdot z=x_\theta(\llambda)\}$ is the attractive set associated to $x_\theta(\llambda)$.

\begin{Rem}
The construction in the smooth case was made by Gordon in \cite[5.4]{go:qu:08}. We generalize his work for all $\h$ thanks to the results of the section \ref{para1}.
\end{Rem}

\vspace{0,6cm}
\hspace{4,5cm}\begin{pspicture}(0,0)(12,2.3) 

\psellipticarc[linestyle=dashed](3,0.1)(1.5,0.25){0}{180}
\psellipticarc(3,0.1)(1.5,0.25){180}{360}
\psellipse(3,2.3)(1.5,0.25)
\pscurve(1.5,2.25)(2.2,1.175)(1.5,0.1)
\pscurve(4.5,2.25)(3.8,1.175)(4.5,0.1)

\rput(2.2,1.175){$\textcolor{gray}\bullet$} 

\psellipticarc[linestyle=dotted, linecolor=gray](3,1.175)(0.8,0.125){0}{180}
\psellipticarc[linecolor=gray](3,1.175)(0.8,0.125){180}{360}

\rput(3.8,1.175){\large$\bullet$} 

\psline[linewidth=2pt](3.8,1.175)(4.05,2.1)
\psline[linestyle=dashed, linewidth=2pt](3.8,1.175)( 3.55,0.3)
\rput(3,-0.6){Attractive sets of $\M_{\theta}(1)$, for $W=G(2,1,1)$ and $\h\in\H^\text{reg}$.} 
\end{pspicture}

        
    \vspace{1cm}
    
    \nd We see in this picture that, for $\l=2$ and $n=1$, we have \textcolor{gray}{gray point} $\prec_\h$ \textbf{black point}.

 \end{abs}

\begin{abs}{\bf Relations with other orders for $\h\in\H^\text{reg}$.}\label{comb-desc} \hspace{0,05cm}  Abusing notation, from now on, we will consider $\H=\{(h,H_1,\dots,H_{\l-1})\in \Q^\l,~h\neq 0\}$. According to \cite[7.12]{go:qu:08}, we have a partition of $\H^\text{reg}$ in alcoves, where the alcove $\alpha(\s,w,+)$ corresponds to the chamber that contains $\ttheta_0^+=w^{-1}(\mathbf{1}+(s_0-s_{\l-1},s_1-s_0,\dots,s_{\l-1}-s_{\l-2}))$ and $\alpha(\s,w,-)$ to the chamber that contains $\ttheta_0^-=w^{-1}(-\mathbf{1}+(s_{\l-1}-s_{0},s_{\l-2}-s_{\l-1},\dots,s_{0}-s_{1}))$, where $\ds{\mathbf{1}=\left(\frac{1}{\l},\dots,\frac{1}{\l}\right)}\in\Z^\l$, $\s\in\Z_0^\l$ and $w\in\cat{S}_\l$.

\begin{Rem}
The action of $\cat{S}_\l$ on $\Z_0^\l$ we defined in \ref{comb-ord} is such that, for $w'\in\cat{S}_\l$, $\alpha(w'\cdot \s,w'w,\pm)=\alpha(\s,w,\pm)$. Moreover, if we set $\tthetabar=(-\theta_0,-\theta_{\l-1},\dots,-\theta_1)$, for $\ttheta=(\theta_0,\dots,\theta_{\l-1})$, then we have :
$\ttheta\in\alpha(\s,id,+)~\Longleftrightarrow~{\tthetabar}\in\alpha(\s,id,-)$. This will allows us to consider only the alcoves of the form $\alpha(\s,id,+)$ from now on.
\end{Rem}

\begin{prop}[Gordon]
Let $\ttheta\in\alpha(\s,id,+)$ and let $\nu_{\is}$ be the $\l$-core associating with $\s\in\Z_0^\l$ (see \cite[6.4]{go:qu:08}). We set $N=\l n+|\nu_{\is}|$. Then we have two bijections $$\begin{array}{rcccl} \mathcal{P}(\l,n) & \leftrightarrow & (\M_\theta(n))^{\C^*} & \leftrightarrow & \mathcal{P}_{\nu_{\is}}(N) \\
\llambda & \mapsto & x_\theta(\llambda) & \mapsto & \tau_{\is}(^t\llambda) \end{array}$$ where $ \mathcal{P}_{\nu_{\is}}(N)$ is the set of the partitions of $N$ whose $\l$-core is $\nu_{\is}$ and $\tau_{\is}(^t\llambda)$ is the partition we defined in  \ref{comb-ord}.
\end{prop}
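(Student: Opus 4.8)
The plan is to prove the statement in two independent halves: the bijection $\mathcal{P}(\l,n)\leftrightarrow(\M_\theta(n))^{\C^*}$, and the bijection $(\M_\theta(n))^{\C^*}\leftrightarrow\mathcal{P}_{\nu_{\is}}(N)$, then check that the composite is the map $\llambda\mapsto\tau_{\is}(^t\llambda)$. For the first half I would invoke Theorem (Gordon) from \ref{geo-des}: since $\ttheta\in\alpha(\s,id,+)\subset\H^{\text{reg}}$, the Calogero--Moser blocks of $G(\l,1,n)$ are singletons, so the natural labelling of $(\M_\theta(n))^{\C^*}$ by $CM_\h(G(\l,1,n))$ becomes a labelling by $\irr G(\l,1,n)=\{E_\lambda,\ \llambda\in\mathcal{P}(\l,n)\}$ itself. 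This gives $\mathcal{P}(\l,n)\leftrightarrow(\M_\theta(n))^{\C^*}$, $\llambda\mapsto x_\theta(\llambda)$, essentially for free.

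The substance is the second bijection, which is where I would follow \cite[\S 6]{go:qu:08} closely. The $\C^*$-fixed points of the Nakajima quiver variety $\M_\theta(n)$ for $\overline{Q_\infty}$ (with the hyperbolic action $\eta\cdot(\X,\Y;v,w)=(\eta\X,\eta^{-1}\Y;v,w)$) are described, via the tautological/torus-fixed module theory, by $\l$-tuples of Young diagrams sitting inside a fixed combinatorial frame determined by the stability parameter $\ttheta$; concretely, a fixed point decomposes the representation into weight spaces for the combined $\G(n)\times\C^*$ torus, and the monomial-ideal description of fixed points on the Hilbert scheme ($\l=1$ case) generalizes to give a combinatorial datum. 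The key point is that for $\ttheta\in\alpha(\s,id,+)$, Gordon's analysis (his 6.4 and the surrounding material) identifies this datum with the set of $\l$-quotients relative to the fixed $\l$-core $\nu_{\is}$, i.e.\ with $\mathcal{P}_{\nu_{\is}}(N)$ where $N=\l n+|\nu_{\is}|$; the bijection sending a partition of $N$ with $\l$-core $\nu_{\is}$ to its $\l$-quotient is the classical core-quotient bijection, and running it through the beta-number sets $\beta_{s_{i-1}}(\lambda^i)$ and the interleaving procedure that defines $\mathcal{T}=\bigcup_i\mathcal{T}_i$ recovers precisely the map $\tau_{\is}$ of \ref{comb-ord}. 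So I would (i) recall the torus-fixed-point classification on $\M_\theta(n)$, (ii) match the stability chamber $\alpha(\s,id,+)$ with the choice of $\s$-shifted beta-numbers, and (iii) observe that the resulting combinatorial bijection is literally $\tau$ composed with transposition of each component, giving $\llambda\mapsto\tau_{\is}(^t\llambda)$; the transpose appears because the hyperbolic $\C^*$-action weights $\X$ and $\Y$ oppositely, so the attracting flag corresponds to the \emph{conjugate} Young diagram, exactly as in Gordon's convention.

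The main obstacle, I expect, is step (ii): making the chamber-to-$\s$ dictionary precise. One must show that the defining inequalities of $\alpha(\s,id,+)$ — equivalently the position of $\ttheta_0^+=\mathbf{1}+(s_0-s_{\l-1},\dots,s_{\l-1}-s_{\l-2})$ relative to the GIT walls of Lemma \ref{murs-git} — are exactly what force the semistable locus of $\mu_\C^{-1}(0)$ to have torus-fixed points indexed by partitions with $\l$-core $\nu_{\is}$ rather than some other core. This is a somewhat delicate stability computation; fortunately it is carried out in \cite[6.4, 7.12]{go:qu:08}, so in practice the proof reduces to quoting Gordon's results and verifying the bookkeeping of the transpose and of the identity $N=\l n+|\nu_{\is}|$ (which just says that adding the $\l$-core of size $|\nu_{\is}|$ to $\l$-quotients of total size $n$ produces partitions of size $\l n+|\nu_{\is}|$). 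I would therefore present this proposition essentially as a translation/reformulation of \cite[\S 6]{go:qu:08} into the notation of \ref{comb-ord}, flagging only the transpose convention as the point needing care.
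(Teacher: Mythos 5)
Your proposal matches the paper's treatment: the paper likewise obtains the first bijection from the Theorem in \ref{geo-des} (singleton blocks for regular parameters) and simply quotes Gordon (\cite[Lem.\ 7.8 and Prop.\ 7.10]{go:qu:08}) for the second bijection, so the proposition is, as you say, a translation of Gordon's results into the notation of \ref{comb-ord}. Your expanded sketch of the torus-fixed-point combinatorics and the transpose convention is a faithful elaboration of what Gordon's cited results contain, not a different route.
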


\nd The first bijection comes from Theorem \ref{geo-des} and it is compatible with the $\bar{~~}$-involution in the following sense. By \cite[7.1]{go:qu:08}, there exists an isomorphism $\phi~:~\M_{\theta}(n)\rightarrow \M_{\bar\theta}(n)$ such that $\phi(x_\theta(\llambda))=x_{\bar\theta}(\llambdabar)$. The second bijection was proved in \cite[Lem. 7.8 and prop. 7.10]{go:qu:08}.

\nd Let us now recall the definition of the combinatorial ordering made in \cite[\S 7]{go:qu:08}. It depends on the alcove containing the parameter $\ttheta$ in the following way :

\begin{itemize} \item for $\ttheta\in\alpha(s,w,+)$, $$\llambda\lhd_\theta \mmu~\Longleftrightarrow~ \tau_{\is}(w\cdot ^t \mmu)\lhd \tau_{\is}(w\cdot^t\llambda),$$
\item for $\ttheta\in\alpha(s,w,-)$, $$\llambda\lhd_\theta \mmu~\Longleftrightarrow~ \tau_{\is}(w\cdot ^t\mmubar)\lhd \tau_{\is}(w\cdot^t\llambdabar).$$
\end{itemize}

\begin{Rem}
By definition and according to the remark \ref{comb-desc}, we have $$\llambda\lhd_\theta\mmu~\Longleftrightarrow~\llambdabar\lhd_{\overline\theta}\mmubar.$$
\end{Rem}

This combinatorial description will allow us to show that the geometric ordering is interesting because it is in relation with many other orders which appear in the representation theory of reflection groups. The left part of the next result : $$\begin{array}{cl}\llambda\prec_{\theta}\mmu & \Longrightarrow \llambda\lhd_{\theta}\mmu\\
 \Downarrow & \\
 \a_{\h}(\llambda)<\a_{\h}(\mmu) & \\
 \fc_{\h}(\llambda)>\fc_{\h}(\mmu) \end{array}$$ comes from \cite[5.4, 7.12, 9.3]{go:qu:08}.

\begin{Th}\label{theo2} Let $\h\in\H^\text{reg}$ and $\ttheta=(-h+H_0,H_1,\dots,H_{\l-1})$. Then, for $\s\in\Z_0^\l$ and $w\in\cat{S}_\l$ :

\begin{itemize}

\item if $\ttheta\in\alpha(\s,w,-)$, then

\hspace{-0,7cm} $\begin{array}{cl}\llambda\prec_{\theta}\mmu & \Longrightarrow \llambda\lhd_{\theta}\mmu \xLeftrightarrow{\stackrel{(w\cdot m)^i=}{s_{\l-1-i}-\frac{i}{\l}}} \kappa_{\m}^s(\mmu)\lhd\kappa_{\m}^s(\llambda) \xRightarrow{r>0} \left\{\begin{array}{rcl} \a_{\m,r}(\llambda)&<&\a_{\m,r}(\mmu)\\
 \fc_{\m,r}(\llambda)&>&\fc_{\m,r}(\mmu) \end{array}\right.\\
 \Downarrow & \\
 \a_{\h}(\llambda)<\a_{\h}(\mmu) & \\
 \fc_{\h}(\llambda)>\fc_{\h}(\mmu) \end{array}$ 
 
 \vspace{0,2cm}

\item if $\ttheta\in\alpha(\s,w,+)$, then

\hspace{-0,5cm} $\begin{array}{cl}\llambda\prec_{\theta}\mmu & \Longrightarrow \llambda\lhd_{\theta}\mmu \xLeftrightarrow{\stackrel{(w\cdot m)^i=}{-s_i-\frac{i}{\l}}} \kappa_{\m}^s(\llambda)\lhd\kappa_{\m}^s(\mmu) \xRightarrow{r<0} \left\{\begin{array}{rcl} \a_{\m,r}(\llambda)&<&\a_{\m,r}(\mmu)\\
 \fc_{\m,r}(\llambda)&>&\fc_{\m,r}(\mmu) \end{array}\right.\\
 \Downarrow & \\
 \a_{\h}(\llambda)<\a_{\h}(\mmu) & \\
 \fc_{\h}(\llambda)>\fc_{\h}(\mmu) \end{array}$

 \end{itemize}
 
 \nd where $s$ is an integer such that $s\gq\max\{hc^\lambda,hc^\mu\}+1$.

\end{Th}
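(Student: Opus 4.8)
The plan is to assemble Theorem~\ref{theo2} from pieces that are already in place, so that essentially no new computation is required. First I would recall that the implication $\llambda\prec_\theta\mmu\Rightarrow\llambda\lhd_\theta\mmu$, together with the downward implications to the $\a_\h$- and $\fc_\h$-orders, is exactly the content of \cite[5.4, 7.12, 9.3]{go:qu:08} quoted just before the statement; this handles the left column and the leftmost arrow in both display lines. The geometric order $\prec_\theta$ is legitimate as an order because $\M_\theta(n)$ is normal (Proposition~\ref{normale}) and the Theorem of \S\ref{para1} applies, and the combinatorial order $\lhd_\theta$ is the one defined via $\tau_{\is}$ in \S\ref{comb-desc}.

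Next I would supply the central equivalences. By the very definition of $\lhd_\theta$ in \S\ref{comb-desc}, for $\ttheta\in\alpha(\s,w,-)$ we have $\llambda\lhd_\theta\mmu\Longleftrightarrow\tau_{\is}(w\cdot{}^t\mmubar)\lhd\tau_{\is}(w\cdot{}^t\llambdabar)$, and by the second Remark of \S\ref{comb-ord} this equals $\tau_{w\cdot\is}({}^t\mmubar)\lhd\tau_{w\cdot\is}({}^t\llambdabar)$. Now invoke part (4) of the comparison Theorem (the one on $\kappa$ versus $\tau$): with $\s$ replaced by $w\cdot\s$ and $\m$ defined by $(w\cdot\m)^i=s_{\l-1-i}-\frac{i}{\l}$, this is equivalent to $\kappa^s_{w\cdot\m}(\mmu)\lhd\kappa^s_{w\cdot\m}(\llambda)$, and by the $\cat{S}_\l$-invariance $\kappa^s_{w\cdot\m}(w\cdot?)=\kappa^s_{\m}(?)$ (first Remark of \S\ref{comb-ord}) — here I must be careful to check that the $\l$-multipartitions appearing really carry the $w$-twist, which they do since $\lhd_\theta$ was defined with $w\cdot{}^t\mmubar$ etc. — we get $\kappa^s_\m(\mmu)\lhd\kappa^s_\m(\llambda)$. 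The case $\ttheta\in\alpha(\s,w,+)$ is identical but uses part (3) of the comparison Theorem and the definition $(w\cdot\m)^i=-s_i-\frac{i}{\l}$, giving $\kappa^s_\m(\llambda)\lhd\kappa^s_\m(\mmu)$. Finally the rightmost implications $\kappa^s_\m(\llambda)\lhd\kappa^s_\m(\mmu)\Rightarrow\a_{\m,r}(\llambda)<\a_{\m,r}(\mmu)$ and $\Rightarrow\fc_{\m,r}(\llambda)>\fc_{\m,r}(\mmu)$ are parts (1) and (2) of the comparison Theorem; the sign constraint $r>0$ (resp.\ $r<0$) enters because in the $-$-alcove the natural positive parameter $r=h$ pairs with $\ttheta_0^-$ while in the $+$-alcove one uses the transpose formula $\a_{\m,r}={}^t$-twist with $r<0$ (the Remark after the cyclotomic Hecke algebra definition), which is consistent with the $\bar{~}$-involution matching $\alpha(\s,w,+)$ and $\alpha(\s,w,-)$.

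The one genuine verification, and the step I expect to be the main obstacle, is the bookkeeping that translates the \emph{alcove-dependent} definition of $\lhd_\theta$ into the precise parameter identification for $\m$ displayed over the ``$\Longleftrightarrow$'' arrows, i.e.\ checking that the $w$-action on $\s$ (with its fractional correction $\frac{w^{-1}(i)-i}{\l}$ from the Remark in \S\ref{comb-ord}) is exactly absorbed by the $\cat{S}_\l$-equivariance $\kappa^s_{w\cdot\m}(w\cdot\llambda)=\kappa^s_\m(\llambda)$ and $\tau_{w\cdot\is}(w\cdot\llambda)=\tau_{\is}(\llambda)$, and that the transposes ${}^t$ and bars $\bar{~}$ line up. I would reduce to the case $w=\id$ using the first Remark of \S\ref{comb-desc} ($\alpha(w'\cdot\s,w'w,\pm)=\alpha(\s,w,\pm)$) plus the $\bar{~}$-compatibility $\ttheta\in\alpha(\s,\id,+)\Leftrightarrow\tthetabar\in\alpha(\s,\id,-)$, so that both bullets follow from the single case $\ttheta\in\alpha(\s,\id,+)$, where the identification $m^i=-s_i-\frac{i}{\l}$ is literally the hypothesis of part (3) of the comparison Theorem. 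Everything else is then a diagram chase through the already-proved statements.
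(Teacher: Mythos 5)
Your proposal is correct and follows essentially the same route as the paper: the left column and the downward arrows are quoted from Gordon, the case is reduced to $w=\id$ via the remark $\alpha(w'\cdot\s,w'w,\pm)=\alpha(\s,w,\pm)$, the central equivalences are parts (3) and (4) of the comparison Theorem in \S\ref{comb-ord}, the right-hand implications are parts (1) and (2), and the sign of $r$ in the positive alcove is handled by the $\bar{~}$-involution (the paper phrases this as passing to $(\overline\m,-r)$ and using translation-invariance of the dominance order on the $\kappa$-partitions, which is the same mechanism as your transpose remark). The only quibble is a $w$ versus $w^{-1}$ wobble in your intermediate equivariance computation, but your final reduction to $w=\id$ makes that moot.
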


\begin{Rem}

(i) The relation obtained between the geometric and combinatorial orderings and the dominance order on the partitions $\kappa_{\m}^s(\llambda)$ could be interesting because this order is related, at least in some cases, with the order $\prec_{\mathcal{LR}}$, see \cite[2.6, 7.11]{ge:or:12}.

\nd (ii) The result we obtain for the $\a$ and $\fc$-functions is less important than the results of \cite[5.4, 9.3]{go:qu:08} which is for all parameters $\h$ but our proof is more direct and the combinatorial order is involved. 

\nd (iii)  For $\l=2$, $n=5$, $\ttheta\in A_0=\alpha((0,0),id,+)$, $\llambda=(\emptyset;(3,2))$, $\mmu=((2,2,1);\emptyset)$ and $s=4$, we have $\tau_{(0,0)}(^t\llambda)=(4,3,1,1,1) \lhd (5,2,2,1)=\tau_{(0,0)}(^t\mmu),$ hence $\mmu\lhd_\theta \llambda$. 

\nd Moreover $\kappa^s_{(\frac{1}{2},0)}(\mmu)=(5,5 ~;~ 4,5~; ~3 ~; ~2,5 ~; ~2 ~;~ 1 ~;~ 0,5 ~;~ 0)\lhd \kappa^s_{(\frac{1}{2},0)}(\llambda)=(6 ~;~ 4 ~;~ 3,5 ~;~ 2,5 ~;~ 1,5 ~;~ 1 ~;~ 0,5 ~;~ 0) $ but $\kappa^s_{\left(\frac{9}{10},0\right)}(\llambda)=(6 ~;~ 4 ~;~ 3,9~;~ 2,9 ~;~ 1,9 ~;~ 1 ~;~ 0,9 ~;~ 0)$ and $\kappa^s_{\left(\frac{9}{10},0\right)}(\mmu)=(5,9 ~;~ 4,9~; ~3 ~; ~2,9 ~; ~2 ~;~ 1 ~;~ 0,9 ~;~ 0)$ are incomparable and
$\kappa^s_{(1,0)}(\llambda)=(6 ~;~ 4 ~;~ 4 ~;~ 3 ~;~ 2 ~;~ 1 ~;~ 1 ~;~ 0 ~;~  0) \lhd (6 ~;~ 5 ~;~ 3~ ;~ 3 ~;~ 2~ ;~ 1~ ;~ 1 ~; ~0 ~;~ 0)=\kappa^s_{(1,0)}(\mmu).$ Thus the dominance order on the partitions $\kappa^s_\m(\llambda)$ is not constant on an alcove, that is why we can not compare it with $\lhd_\theta$ for all parameters.

\nd Finally, we have $\a_{\left(\frac{1}{2},0\right),1}(\llambda)=32,5<34=\a_{\left(\frac{1}{2},0\right),1}(\mmu)$ and $\a_{(1,0),1}(\llambda)=40>39=\a_{(1,0),1}(\mmu)$ thus by continuity of the $\a$-function with respect to the parameters $(\m,r)$(see \cite[Prop. 5.5.11]{ge:re:10}), we can not compare the multipartitions $\llambda$ and $\mmu$ with the geometric order in this alcove. That proves that $\prec_\theta$ and $\lhd_\theta$ are not equivalent and all the above implications are not equivalences.

\nd (iv) Using the formulas $h=r$ et $H_i=r(m^i-m^{i-1})$, we get 
\begin{itemize}
\item for $m^i=-s_i-\frac{i}{\l}$ and $r=-1$ : $\ttheta=\mathbf{1}+(s_0-s_{\l-1},s_1-s_0,\dots,s_{\l-1}-s_{\l-2}),$
\item for $m^i=s_{\l-1-i}-\frac{i}{\l}$ and $r=1$ : $\ttheta=-\mathbf{1}+(s_{\l-1}-s_{0},s_{\l-2}-s_{\l-1},\dots,s_{0}-s_{1})$
\end{itemize} and we recognize the parameters involved in the definition of alcoves, hence the change of parameters is consistent.

\end{Rem}

\begin{proof} Let us show that $$\llambda\lhd_{\theta}\mmu \xLeftrightarrow{\stackrel{(w\cdot m)^i=}{s_{\l-1-i}-\frac{i}{\l}}} \kappa_{\m}^s(\mmu)\lhd\kappa_{\m}^s(\llambda) \xRightarrow{r>0} \left\{\begin{array}{rcl} \a_{\m,r}(\llambda)&<&\a_{\m,r}(\mmu)\\
 \fc_{\m,r}(\llambda)&>&\fc_{\m,r}(\mmu). \end{array}\right.$$
 According to the first remark of \ref{comb-desc}, we just have to consider the case $\ttheta\in\alpha(\s,id,-)$. Then $\ds{-h=\sum_{i=0}^{\l-1}\theta_i<0}$ and $r=h>0$. Consequently,  according to Theorem \ref{comb-ord}, we have : $$\llambda\lhd_\theta\mmu~\Longleftrightarrow~\kappa^s_{\m}(\mmu)\vartriangleleft\kappa^s_{\m}(\llambda)~\Longrightarrow~\left\{\begin{array}{rcl} \a_{\m,r}(\llambda)&<&\a_{\m,r}(\mmu)\\
 \fc_{\m,r}(\llambda)&>&\fc_{\m,r}(\mmu). \end{array}\right.$$
 
 \nd For positive alcoves, we have to consider $\tthetabar$ that corresponds to $(\overline\m,-r)$, where $\overline\m=(-m^{\l-1},\dots,-m^0)$ with $-r>0$. If $m^i=-s_i-\frac{i}{\l}$ then $\overline\m^i=s_{\l-1-i}-\frac{i}{\l}+\frac{\l-1}{\l}$ and, since the dominance order on the partitions $\kappa_{\m}^s(\llambda)$ is invariant by translation on $\m$, we can apply the result we obtained for negative alcoves.

\end{proof}

\end{abs}

\begin{abs}\label{order}{\bf Combinatorial preorder and relations between the orders on $CM_\h(G(\l,1,n)$ for $\h\in\H-\H^\text{reg}$.} \hspace{0,05cm} In this section, we generalize the results of the previous section from $\H^\text{reg}$ to $\H$. It will be useful for the study of the groups $G(\l,e,n)$ for $e\neq 1$, see \S \ref{glen}.

Let us begin with the generalization of \cite[5.4, 9.3]{go:qu:08} for $\h'$ in a wall. Both the $\a$-function and the $\fc$-function are constant across blocks of Calogero-Moser (see \cite[5.3, 9.2]{go:qu:08}) therefore we can define $\a_{\h'}(B)$ and $\fc_{\h'}(B)$ for $B\in CM_{\h'}(G(\l,1,n))$.

\begin{Th}
Let $\h'\in\H$ and let $B,~B'\in CM_{\h'}(G(\l,1,n))$. Then $$B\prec_{\theta'} B' ~\Longrightarrow~\left\{\begin{array}{rcl}\a_{\h'}(B)&\lq&\a_{\h'} (B')\\
\fc_{\h'}(B)&\gq&\fc_{\h'} (B'). \end{array}\right.$$
\end{Th}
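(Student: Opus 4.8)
The plan is to deduce this from the smooth case by a degeneration/continuity argument, using the resolution morphism $\pi_{\theta,\theta'}:\M_\theta(n)\to\M_{\theta'}(n)$ constructed earlier. First I would fix $\h'\in\H$ lying on a GIT wall and choose $\h$ in a chamber "in front of" that wall, so that $\ttheta=(-h+H_0,H_1,\dots,H_{\l-1})\in\H^{\text{reg}}$ and $\pi_{\theta,\theta'}$ exists, is projective and surjective, and is $\C^*$-equivariant for the hyperbolic action $\eta\cdot(\X,\Y;v,w)=(\eta\X,\eta^{-1}\Y;v,w)$. Both $\M_\theta(n)$ and $\M_{\theta'}(n)$ are normal by Proposition \ref{normale}, and $\M_\theta(n)$ is smooth so its Bialynicki-Birula decomposition is filtrable; hence the results of \S\ref{2.2} apply with $X=\M_\theta(n)$, $Y=\M_{\theta'}(n)$, $\pi=\pi_{\theta,\theta'}$. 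Restricting everything to $\mathcal{Z}_\theta(n)=\pi_{\theta,0}^{-1}((\cat{h}\times\{0\})/W)$ and $\mathcal{Z}_{\theta'}(n)$ (which are $\pi_{\theta,\theta'}$-compatible since the diagram over $\M_0(n)$ commutes), Proposition \ref{theo1}(1) gives that $x\prec_\theta x'$ implies $\pi_{\theta,\theta'}(x)\prec_{\theta'}\pi_{\theta,\theta'}(x')$ on the fixed-point sets.

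Next I would transport this to the level of Calogero-Moser blocks. By Gordon's theorem (Theorem \ref{geo-des}) the $\C^*$-fixed points of $\M_{\theta'}(n)$ are labelled by $CM_{\h'}(G(\l,1,n))$, and for the smooth parameter $\ttheta$ the fixed points are the singletons $x_\theta(\llambda)$, $\llambda\in\mathcal{P}(\l,n)$; the map sending $x_\theta(\llambda)$ to the block containing $E_\lambda$ is exactly $\pi_{\theta,\theta'}$ on fixed points (this is the content of how the blocks appear — a fixed point of $\M_{\theta'}(n)$ collects the $x_\theta(\llambda)$ whose $\llambda$ lie in one block). So for $B,B'\in CM_{\h'}(G(\l,1,n))$, the relation $B\prec_{\theta'}B'$ is, up to transitive closure, witnessed by some $\llambda\in B$, $\mmu\in B'$ with $x_\theta(\llambda)\prec_\theta x_\theta(\mmu)$ after possibly passing through Proposition \ref{theo1}(2); in any case, by the $\h$-regular Theorem \ref{theo2} we have $x_\theta(\llambda)\prec_\theta x_\theta(\mmu)\Rightarrow \llambda\lhd_\theta\mmu\Rightarrow \a_\h(\llambda)<\a_\h(\mmu)$ and $\fc_\h(\llambda)>\fc_\h(\mmu)$.

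The final step is a continuity argument in the parameter. The key input is that the $\a$-function and the $\fc$-function are continuous in $\h$ (for the $\a$-function this is \cite[Prop. 5.5.11]{ge:re:10}, cited in Remark (iii) after Theorem \ref{theo2}; for $\fc$ it is polynomial in $\h$), and that they are constant on Calogero-Moser blocks at $\h'$ (\cite[5.3, 9.2]{go:qu:08}), so that $\a_{\h'}(B)$, $\fc_{\h'}(B)$ are well defined. Letting $\h$ run over the chamber in front of the wall and tend to $\h'$, the strict inequality $\a_\h(\llambda)<\a_\h(\mmu)$ degenerates to $\a_{\h'}(\llambda)\lq\a_{\h'}(\mmu)$, i.e. $\a_{\h'}(B)\lq\a_{\h'}(B')$, and similarly $\fc_{\h'}(B)\gq\fc_{\h'}(B')$; this is exactly the loss of strictness that turns $<$ into $\lq$ in the statement. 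Passing to the transitive closure is harmless since $\lq$ and $\gq$ are transitive.

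The main obstacle is making precise the compatibility of the geometric order $\prec_{\theta'}$ on $CM_{\h'}$ with the order $\prec_\theta$ on $\mathcal{P}(\l,n)$ through $\pi_{\theta,\theta'}$: one must be careful that the block $B\prec_{\theta'}B'$ relation, after transitive closure, really does descend to a chain of $\a$- (resp. $\fc$-) comparisons, because Proposition \ref{theo1}(2) only gives a zig-zag $x_1\prec x_2$, $x_2'\prec x_3'$, $\dots$ with $\pi(x_i)=\pi(x_i')$ rather than a single comparison. Here the fact that $\a_\h$ and $\fc_\h$ are \emph{constant on blocks at $\h'$} is what rescues the argument: each adjacent pair in the zig-zag lies over two blocks and, after letting $\h\to\h'$, contributes one inequality $\a_{\h'}(B_i)\lq\a_{\h'}(B_{i+1})$, and these chain together. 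One should also check that the choice of chamber in front of the wall does not matter, which follows because the conclusion only involves $\h'$.
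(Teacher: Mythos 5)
Your proposal is correct and follows essentially the same route as the paper: lift $B\prec_{\theta'}B'$ through $\pi_{\theta,\theta'}$ to a zig-zag of $\prec_{\theta}$-relations at a regular parameter $\h$ in an adjacent alcove (the second part of Proposition \ref{theo1}), apply Theorem \ref{theo2} to obtain strict $\a$- and $\fc$-inequalities there, and combine continuity in $\h$ with the constancy of $\a_{\h'}$ and $\fc_{\h'}$ on Calogero--Moser blocks to chain the resulting weak inequalities at the wall. The only cosmetic slip is your initial appeal to part (1) of Proposition \ref{theo1}, which goes in the wrong direction; the argument you actually carry out correctly rests on part (2), exactly as in the paper.
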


\begin{proof}
Let $\ttheta$ inside an alcove in front of the wall where $\ttheta'$ lives. According to Proposition \ref{theo1} and Theorem \ref{theo2} :
$$\begin{array}{rcl} B\prec_{\theta'} B' &\Longleftrightarrow & \exists~ \llambda_1,\llambda_2,\llambda_2',\llambda_3',\dots,\llambda_m\in\mathcal{P}(\l,n) ~:~\llambda_1\in B,~\llambda_m\in B',\\ 
& & \forall~2\lq i\lq m-1,~\llambda_i \text{ and } \llambda'_i \text{ are in the same block} \\
& & \forall~1\lq i\lq m-1,~\llambda_i\prec_{\theta} \llambda_{i+1}\text{ if } i\not\in 2\Z\text{ and } \llambda_i'\prec_{\theta} \llambda_{i+1}'\ \text{ if }i\in 2\Z\\

&\Longrightarrow & \exists~ \llambda_1,\llambda_2,\llambda_2',\llambda_3',\dots,\llambda_m\in\mathcal{P}(\l,n) ~:~\llambda_1\in B,~\llambda_m\in B',\\ 
& & \forall~2\lq i\lq m-1,~\a_{\h'}(\llambda_i )=\a_{\h'}( \llambda'_i) \\
& & \forall~1\lq i\lq m-1,~\a_{\h}(\llambda_i)< \a_\h(\llambda_{i+1})\text{ if } i\not\in 2\Z\text{ and }\a_\h(\llambda_i')<\a_\h( \llambda_{i+1}')\ \text{ if }i\in 2\Z.
\end{array}$$

\nd Since the $\a$-function is continuous with respect to $\h$, the fact that $\a_{\h}(\llambda_i)< \a_\h(\llambda_{i+1})$ and $\a_\h(\llambda_i')<\a_\h( \llambda_{i+1}')$ for every $\h$ in the alcove implies that $\a_{\h'}(\llambda_i)\lq \a_{\h'}(\llambda_{i+1})$ and $\a_{\h}(\llambda_i')\lq \a_{\h'}( \llambda_{i+1}')$. Hence :

$$\begin{array}{rcl} B\prec_{\theta'} B' &\Longrightarrow&  \exists~ \llambda_1,\llambda_2,\llambda_2',\llambda_3',\dots,\llambda_m\in\mathcal{P}(\l,n) ~:~\llambda_1\in B,~\llambda_m\in B',\\ 
& & \forall~2\lq i\lq m-1,~\a_{\h'}(\llambda_i )=\a_{\h'}( \llambda'_i) \\
& & \forall~1\lq i\lq m-1,~\a_{\h'}(\llambda_i)\lq \a_{\h'}(\llambda_{i+1})\text{ if } i\not\in 2\Z\text{ and }\a_{\h'}(\llambda_i')\lq\a_{\h'}( \llambda_{i+1}')\ \text{ if }i\in 2\Z\\

&\Longrightarrow&  \exists~ \llambda_1,\llambda_2,\llambda_2',\llambda_3',\dots,\llambda_m\in\mathcal{P}(\l,n) ~:~\llambda_1\in B,~\llambda_m\in B',\\ 

& &\text{and }\a_{\h'}(\llambda_1)\lq \a_{\h'}(\llambda_2)=\a_{\h'}(\llambda_2')\lq \a_{\h'}(\llambda_3')=\cdots \lq \a_{\h'}(\llambda_m)  \\

&\Longrightarrow&  \exists~ \llambda_1,\llambda_2,\llambda_2',\llambda_3',\dots,\llambda_m\in\mathcal{P}(\l,n) ~:~\llambda_1\in B,~\llambda_m\in B',\\ 

& & \text{and } \a_{\h'}(\llambda_1)\lq \a_{\h'}(\llambda_m)\\

&\Longrightarrow& \a_{\h'}(B)\lq \a_{\h'}(B').
\end{array}$$

\nd For the $\fc$-function, the proof is the same.
\end{proof}

Let us now give a combinatorial description of the geometric order. According to \cite[8.1]{go:qu:08}, an element $\h\in\H$ has a type $J\subset \{0,\dots,\l-1\}$ that corresponds to the fundamental hyperplanes in which lives the $\tilde{\cat{S}_\l}$-conjugate of $\h$ in the closure of the fundamental alcove $A_0$ (containing $\ttheta_0=\mathbf 1$). For instance, the elements inside an alcove has type $\emptyset$.

\nd Let $J\subset\{0,\dots,\l-1\}$ and $\rho\in\mathcal{P}(N)$, for some $N\in\Z_{>0}$. The \textit{$J$-heart} $\rho_J$ of $\rho$ is the subpartition of $\rho$ obtained by removing all $j$-removable boxes with $j\in J$ from $\rho$ (see \cite[8.2]{go:qu:08} for more details). The subset of $\mathcal{P}(N)$ whose elements are the partitions which have the same $J$-heart $\rho_J$ is called the \textit{$J$-class} of $\rho$ and is denoted by $\overline{\rho}^J$. We are now able to reformulate Proposition 8.3 of \cite{go:qu:08}.

\begin{prop}[Gordon]

Let $J\subset\{0,\dots,\l-1\}$ and suppose that $\ttheta'$ is of type $J$ and belongs to the closure of the alcove $\alpha(\s,w,+)$. Let $\ttheta\in\alpha(\s,w,+)$ and let $\nu_\s$ be the $\l$-core associating with $\s$. We set $N=\l n+|\nu_\s|$. Then the $\C^*$-fixed points of $\M_{\theta'}(n)$ are labelled by the $J$-classes in $\mathcal{P}_{\nu_s}(N)$ and the bijection is such that the restriction of $\pi_{\theta,\theta'}$ to the fixed points can be described as the application : $$\begin{array}{rcl} \mathcal{P}(\l,n) & \rightarrow & \mathcal{P}(\mathcal{P}_{\nu_s}(N))\\ \llambda & \mapsto & \overline{\tau_\s(^t\llambda)}^J \end{array}$$ where $ \mathcal{P}(\mathcal{P}_{\nu_s}(N))$ is the power set of  $\mathcal{P}_{\nu_s}(N)$.

\end{prop}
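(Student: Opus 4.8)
The statement to prove is essentially a reformulation of Gordon's Proposition 8.3 from \cite{go:qu:08}, so the plan is to deduce it from the results already assembled in the excerpt rather than to reprove Gordon's combinatorics from scratch.

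The plan is to proceed as follows. First I would recall that by the earlier result \ref{qu-var}, since $\ttheta'$ lies in the closure of $\alpha(\s,w,+)$, there is a projective surjective $\C^*$-equivariant morphism $\pi_{\theta,\theta'}\colon\M_\theta(n)\to\M_{\theta'}(n)$, and by Lemma \ref{lem1} it induces a surjection $\M_\theta(n)^{\C^*}\twoheadrightarrow\M_{\theta'}(n)^{\C^*}$. Thus the $\C^*$-fixed points of $\M_{\theta'}(n)$ are exactly the fibres of this surjection, i.e. equivalence classes of $\C^*$-fixed points of $\M_\theta(n)$ under the relation "having the same image". By the Proposition of \ref{comb-desc}, the fixed points of $\M_\theta(n)$ are labelled by $\mathcal{P}_{\nu_\s}(N)$ via $\llambda\mapsto x_\theta(\llambda)\mapsto\tau_\s({}^t\llambda)$ (after twisting by $w$ as in the definition of the alcove, one replaces $\llambda$ by $w\cdot\llambda$ — I would be careful to track this throughout but it is a harmless relabelling). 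So the content of the statement is the identification of the fibres of $\pi_{\theta,\theta'}$ on fixed points with the $J$-classes $\overline{\tau_\s({}^t\llambda)}^J$.

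Next I would invoke the type-$J$ structure: by \cite[8.1]{go:qu:08} the parameter $\ttheta'$ being on the fundamental hyperplanes indexed by $J$ (after $\widetilde{\cat{S}_\l}$-conjugation into $\overline{A_0}$) is precisely the condition that controls which $\C^*$-fixed points of $\M_\theta(n)$ get glued together in $\M_{\theta'}(n)$. Crossing a single wall corresponding to the fundamental hyperplane indexed by $j\in J$ merges the fixed points that differ by adding or removing a $j$-box from the associated partition; iterating over all of $J$ (the hyperplanes meeting at $\ttheta'$), the fibre of a fixed point $x_\theta(\llambda)$ becomes exactly the set of partitions with the same $J$-heart as $\tau_\s({}^t\llambda)$, that is the $J$-class $\overline{\tau_\s({}^t\llambda)}^J$. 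This is the combinatorial heart of Gordon's Proposition 8.3, so I would cite it directly for this identification and for the fact that the $J$-classes partition $\mathcal{P}_{\nu_\s}(N)$, making the assignment $\llambda\mapsto\overline{\tau_\s({}^t\llambda)}^J$ a well-defined surjection onto the set of $J$-classes, hence a bijection between $\M_{\theta'}(n)^{\C^*}$ and the $J$-classes in $\mathcal{P}_{\nu_\s}(N)$. The compatibility of $\pi_{\theta,\theta'}$ on fixed points with $\llambda\mapsto\overline{\tau_\s({}^t\llambda)}^J$ then follows since $\pi_{\theta,\theta'}(x_\theta(\llambda))$ is by construction the fixed point whose fibre contains $x_\theta(\llambda)$.

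The main obstacle, and the only place where real work is hidden, is the identification of the glued fixed points with a single $J$-heart class — i.e. showing that the combinatorics of the Bialynicki-Birula/GIT degeneration across the walls through $\ttheta'$ matches the purely combinatorial operation of passing to the $J$-heart. Since this is exactly the statement of \cite[Prop. 8.3]{go:qu:08}, in this paper I would not reprove it but simply package it in the reformulated language of $J$-classes and $\l$-cores introduced just above the statement; the proof is therefore essentially a translation, checking only that the bijection $\mathcal{P}(\l,n)\leftrightarrow\mathcal{P}_{\nu_\s}(N)$ of \ref{comb-desc} intertwines Gordon's description with ours and that the twist by $w$ is handled consistently with the conventions fixed in \ref{comb-desc}.
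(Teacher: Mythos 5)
Your proposal matches the paper's treatment: the paper states this result as a reformulation of \cite[Prop.\ 8.3]{go:qu:08} and offers no independent proof, exactly as you propose to do by delegating the wall-crossing/$J$-heart identification to Gordon and only checking that the labelling $\llambda\mapsto\tau_{\is}({}^t\llambda)$ and the twist by $w$ are consistent with the conventions of \ref{comb-desc}. Your additional remarks (surjectivity of $\pi_{\theta,\theta'}$ on fixed points via Lemma \ref{lem1}, fibres as $J$-classes) are correct glue but do not change the substance; this is essentially the same approach.
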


\nd We are now able to define a combinatorial preorder on $CM_{\h'}(G(\l,1,n))$. For $\ttheta'$ in a wall next to the alcoves $A_1,\dots,A_q$ and $\llambda,~\mmu\in\mathcal{P}(\l,n)$, we set : $$\begin{array}{c}\llambda\lhd_{\theta'}\mmu~\Longleftrightarrow~\exists~ \llambda_0=\llambda,\dots,\llambda_p=\mmu \text{ such that for all }0\lq i\lq p-1 :\\
\exists 1\lq j\lq q \text{ such that, for } \ttheta_j\in A_j,~\llambda_i\lhd_{\theta_j}\llambda_{i+1}.\end{array}$$
 We define \textit{the combinatorial preorder on $CM_{\h'}(G(\l,1,n))$} in the following way :  let $B,~B'\in CM_{\h'}(G(\l,1,n))$, $$B\lhd_{\theta'} B'~\Longleftrightarrow~\exists~\llambda\in B \text{ and } \mmu\in B', \text{ such that } \llambda\lhd_{\theta'}\mmu.$$
 
  Then we can relate it with the geometric ordering for $\l=2$ i.e. for the Weyl groups of type $B_n$. The next result compares the Calogero-Moser blocks of $B_n$ with the equivalence classes of this preorder.

\begin{prop} Assume that $\ttheta'$ is a wall and let $\ttheta$ and $\tthetatilde$ be two parameters which belong to each side of this wall.
If $\llambda$ and $\mmu$ are in the same block for $\ttheta'$ then there exists $ \llambda_0=\llambda,\llambda_1,\dots,\llambda_p=\mmu $ such that for all $ 1\leqslant i\leqslant p -1:$  $$(\llambda_i\lhd_{\theta}\llambda_{i+1} \text{ or } \llambda_i\lhd_{\widetilde \theta}\llambda_{i+1}) \text{ and }  (\llambda_{i+1}\lhd_{\theta}\llambda_i \text{ or }  \llambda_{i+1}\lhd_{\widetilde \theta}\llambda_i).$$
\end{prop}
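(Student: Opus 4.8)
The plan is to reduce, via Gordon's description of the Calogero-Moser blocks on a wall, to a statement about the dominance order on a single $J$-class of partitions, and then to settle that statement on the $\l$-abacus. I would first normalise: using $\alpha(w'\cdot\s,w'w,\pm)=\alpha(\s,w,\pm)$, the equivalence $\ttheta\in\alpha(\s,\id,+)\Longleftrightarrow\tthetabar\in\alpha(\s,\id,-)$ and $\llambda\lhd_\theta\mmu\Longleftrightarrow\llambdabar\lhd_{\overline\theta}\mmubar$, I may assume that $\ttheta$ lies in an alcove $\alpha(\s,\id,+)$ whose closure contains $\ttheta'$, and that $\ttheta'$ has type $J$. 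As $\l=2$, the wall $\ttheta'$ borders exactly two alcoves, $\alpha(\s,\id,+)\ni\ttheta$ and a second one containing $\tthetatilde$, which by \cite[\S8]{go:qu:08} is again of the form $\alpha(\s',\id,+)$, with $\nu_{\s'}$ obtained from $\nu_\s$ by the wall-crossing attached to $J$. By \cite[8.3]{go:qu:08} in the form recalled above, the blocks of $CM_{\h'}(G(2,1,n))$ are exactly the $J$-classes inside $\mathcal{P}_{\nu_\s}(N)$, $N=2n+|\nu_\s|$, via $\llambda\mapsto\overline{\tau_\s({}^t\llambda)}^{J}$, and by the description of $\lhd_\theta$ in \ref{comb-desc},
$$\llambda\lhd_\theta\mmu\ \Longleftrightarrow\ \tau_\s({}^t\mmu)\lhd\tau_\s({}^t\llambda),\qquad\llambda\lhd_{\widetilde\theta}\mmu\ \Longleftrightarrow\ \tau_{\s'}({}^t\mmu)\lhd\tau_{\s'}({}^t\llambda).$$

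Fix now a block $B$, i.e.\ a $J$-class $C\subseteq\mathcal{P}_{\nu_\s}(N)$ with $J$-heart $\rho_0$; let $C'\subseteq\mathcal{P}_{\nu_{\s'}}(N')$ be the $J$-class corresponding to $B$ on the other side of the wall, and let $\Phi\colon C\to C'$ be the bijection $\tau_\s({}^t\llambda)\mapsto\tau_{\s'}({}^t\llambda)$ for $\llambda\in B$. The crucial point, and the hard part of the argument, is that $\Phi$ is \emph{order-reversing} for the dominance order: $\rho_1\lhd\rho_2\Rightarrow\Phi(\rho_2)\lhd\Phi(\rho_1)$ for all $\rho_1,\rho_2\in C$. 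I would prove this on the $\l$-abacus: the class $C$ is the set of partitions obtained from $\rho_0$ by filling, with a prescribed total number of boxes, the components of the $\l$-quotient indexed by $J$, and Gordon's analysis of a wall of type $J$ (\cite[\S7,\S8]{go:qu:08}) identifies the wall-crossing with the reflection of precisely those components, combined with the change of $\l$-core $\nu_\s\rightsquigarrow\nu_{\s'}$; this reflection reverses the dominance order on $C$ (transposition of a Young diagram being dominance-reversing), which gives the claim. Making this identification precise in the present normalisation — i.e.\ unwinding the combinatorics of \cite[\S7,\S8]{go:qu:08} — is the main obstacle; I note that only the stated implication, and not its converse, will be needed.

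To conclude, I would observe that the comparability graph of $(C,\lhd)$ is connected, since $C$ has a unique dominance-maximal element $\rho^{\max}$ (obtained by inserting the prescribed boxes into the $J$-components of $\rho_0$ as high as possible), so that $\rho\unlhd\rho^{\max}$ for every $\rho\in C$. Given $\llambda,\mmu$ in a common block $B$, set $\rho=\tau_\s({}^t\llambda)$, $\sigma=\tau_\s({}^t\mmu)\in C$ and choose a chain $\rho=\rho^{(0)},\dots,\rho^{(p)}=\sigma$ in $C$ with dominance-comparable consecutive terms (for instance $\rho,\rho^{\max},\sigma$, so $p\leqslant2$); let $\llambda_i\in B$ be defined by $\tau_\s({}^t\llambda_i)=\rho^{(i)}$, so that $\llambda_0=\llambda$ and $\llambda_p=\mmu$. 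Fix $i$. The two conditions required of $(\llambda_i,\llambda_{i+1})$ being symmetric under interchanging $\llambda_i$ and $\llambda_{i+1}$, we may assume $\rho^{(i)}\lhd\rho^{(i+1)}$; then $\llambda_{i+1}\lhd_\theta\llambda_i$ by the first displayed equivalence, while $\Phi(\rho^{(i+1)})\lhd\Phi(\rho^{(i)})$ by the order-reversal of $\Phi$, that is $\llambda_i\lhd_{\widetilde\theta}\llambda_{i+1}$. Hence, for every consecutive pair, $(\llambda_i\lhd_\theta\llambda_{i+1}\ \text{or}\ \llambda_i\lhd_{\widetilde\theta}\llambda_{i+1})\ \text{and}\ (\llambda_{i+1}\lhd_\theta\llambda_i\ \text{or}\ \llambda_{i+1}\lhd_{\widetilde\theta}\llambda_i)$, which is the assertion of the Proposition.
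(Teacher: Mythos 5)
Your proposal does not close the argument: the step you yourself identify as ``the main obstacle'' --- that the wall-crossing bijection $\Phi$ between the $J$-class on one side and its counterpart on the other side reverses the dominance order --- is exactly the content of the proposition, and the justification you offer for it (wall-crossing = reflection of the $J$-indexed components of the $\l$-quotient, and ``transposition of a Young diagram is dominance-reversing'') is a heuristic, not a proof. Note moreover that what your chain argument needs is a \emph{global} order-reversal, valid for every dominance-comparable pair in the class; this is strictly stronger than anything that follows from a one-step computation, and it is precisely what the paper's proof is organised to avoid. A second unproved assertion is the connectivity of the comparability graph of the class via a unique dominance-greatest element $\rho^{\max}$: the elements of the class are constrained (the entries of each row of the symbol form a $\beta$-set, hence must be pairwise distinct), so ``inserting the boxes as high as possible'' is not automatically well defined, and $\rho\unlhd\rho^{\max}$ for \emph{all} $\rho$ in the class is a claim that needs an argument, not just maximality.

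The paper's route is different and more elementary. For $\l=2$ it characterises ``same block on the wall'' as equality of the multiset of entries of the shifted symbol $\cat{B}^s_{\m}$ (via \cite[3.13]{ma:ca:10} and \cite[3.4]{br:fa:02}), then builds the chain $\llambda_0,\dots,\llambda_p$ by transposing \emph{one pair of entries at a time} between the two rows of the symbol (Lemma \ref{blocs-symboles}); a direct computation shows that each such elementary transposition moves $\kappa^s_{\m+\frac12}$ up in dominance and $\kappa^s_{\m-\frac12}$ down, or vice versa. This single local computation delivers simultaneously the connectivity of the chain and both required comparabilities, after translating $\kappa^s_{\m\pm\frac12}$-dominance into $\lhd_\theta$ and $\lhd_{\widetilde\theta}$ via Theorem \ref{comb-ord}. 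If you want to salvage your approach, you would need to either prove the global order-reversal of $\Phi$ on comparable pairs and the existence of a greatest element, or (better) localise your argument to single abacus moves --- at which point you essentially recover the paper's proof.
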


\begin{proof} The alcoves for $\l=2$ are described in \cite[3.5]{go:ca:09}. We set $\ttheta=(1-\frac{b}{a},\frac{b}{a})$ and $\m=(\frac{b}{a},0)$, then for $i\in\Z$ let $A_i=\{(d,-d+1),~i<d<i+1\}$. If $i\in 2\Z$ then $A_i=\alpha((\frac{i}{2},-\frac{i}{2}),id,+)$ otherwise $A_i=\alpha((\frac{1-i}{2},\frac{i-1}{2}),\sigma,+)$, where $\sigma$ is the transposition $(12)$.

\begin{Lem}\label{blocs-symboles} Let $\m=\left(\frac{b}{a},0\right)$ with $\frac{b}{a}\in\Z$ and $\m\pm\frac{1}{2}=\left(\frac{b}{a}\pm\frac{1}{2},0\right)$. If $\llambda$ and $\mmu\in\mathcal{P}(2,n)$ are such that $\llambda\neq\mmu$ and if $s\gq n+1$ then :
$$\begin{array}{rcl}\kappa^s_{\m}(\llambda)=\kappa^s_{\m}(\mmu) & \Longrightarrow &\exists~ \llambda_0=\llambda,\llambda_1,\dots,\llambda_p=\mmu \text{ such that } \forall ~0\lq i\lq p-1 : \\ & & \kappa^s_{\m+\frac{1}{2}}(\llambda_i)\lhd\kappa^s_{\m+\frac{1}{2}}(\llambda_{i+1}) \text{ and } \kappa^s_{\m-\frac{1}{2}}(\llambda_{i+1})\lhd \kappa^s_{\m-\frac{1}{2}}(\llambda_i) \text{ or }\\
& & \kappa^s_{\m+\frac{1}{2}}(\llambda_i)\rhd\kappa^s_{\m+\frac{1}{2}}(\llambda_{i+1}) \text{ and } \kappa^s_{\m-\frac{1}{2}}(\llambda_{i+1})\rhd \kappa^s_{\m-\frac{1}{2}}(\llambda_i). \end{array}$$
\end{Lem}

\begin{proof}[Proof of the lemma :]

If $\kappa^s_{\m}(\llambda)=\kappa^s_{\m}(\mmu)$ then $\cat{B}^s_{\m}(\llambda)$ and $\cat{B}^s_{\m}(\mmu)$ have the same elements. We first suppose that we can obtain $\cat{B}^s_{\m}(\mmu)$ from $\cat{B}^s_{\m}(\llambda)$ by permuting two elements and show that \begin{itemize}
\item either $\kappa^s_{\m+\frac{1}{2}}(\llambda)\lhd\kappa^s_{\m+\frac{1}{2}}(\mmu)$ and $\kappa^s_{\m-\frac{1}{2}}(\mmu)\lhd\kappa^s_{\m-\frac{1}{2}}(\llambda)$ 
\item or $\kappa^s_{\m+\frac{1}{2}}(\llambda)\rhd\kappa^s_{\m+\frac{1}{2}}(\mmu)$ and $\kappa^s_{\m-\frac{1}{2}}(\mmu)\rhd\kappa^s_{\m-\frac{1}{2}}(\llambda)$. \end{itemize} In the general case, the sequence of multipartitions $\llambda_0,\dots,\llambda_p$ will be built step by step.

\nd Let  $$\cat{B}^s_{\m}(\llambda)=\left(\begin{array}{cccc}x_1,&\dots&\dots,&x_{s+\frac{b}{a}}\\ y_1, & \dots, &y_s & \end{array}\right)$$ 

\nd where, for $1\leqslant k\leqslant s-h(\lambda^1)+\frac{b}{a}$, $x_k=k-1$ and for $1\leqslant k\leqslant h(\lambda^1)$, $x_{s+\frac{b}{a}+1-k}=\lambda_k^1-k+s+\frac{b}{a}$ are such that
$x_1<x_2<\cdots<x_{s+\frac{b}{a}}$, 

\nd and for $1\leqslant k\leqslant s-h(\lambda^2)$, $y_k=k-1$ and for $1\leqslant k\leqslant h(\lambda^2)$, $y_{s+1-k}=\lambda_k^2-k+s$ are such that $y_1<y_2<\cdots<y_s$.     

\nd Let us suppose that $\cat{B}^s_{\m}(\mmu)$ differs from $\cat{B}^s_{\m}(\llambda)$ by a permutation $x_i\leftrightarrow y_j$, with $x_i<y_j$. Therefore for all $1\leqslant k\leqslant s$, $y_k\neq x_i$ and :


$$\kappa^s_{\m}(\llambda)=(\kappa_1\geqslant\cdots\geqslant\kappa_{s_1}>\kappa_{s_1+1} >\kappa_{s_1+2}\geqslant\cdots\geqslant \kappa_{s_2} > \kappa_{s_2+1}>\kappa_{s_2+2}\geqslant\cdots
\geqslant\kappa_{2s+\frac{b}{a}})$$

 \nd where $\kappa_{s_1+1}=y_j$, $\kappa_{s_2+1}=x_i$ and $\kappa_{2s+\frac{b}{a}-1}=\kappa_{2s+\frac{b}{a}}=0$.\\

 \nd Since $[s-h(\lambda^1)+\frac{b}{a}-\frac{1}{2}]=s-h(\lambda^1)+\frac{b}{a}-1$, we have  $\cat{B}^s_{\m-\frac{1}{2}}(\llambda)=(\cat{B}^0,\cat{B}^1)$, where $\cat{B}^0=(x_2-\frac{1}{2},\dots,x_{s+\frac{b}{a}}-\frac{1}{2})$ has $s+\frac{b}{a}-1$ terms and $\cat{B}^1=(y_1,\dots,y_s)$. Moreover the elements in the first row of $\cat{B}^s_{\m+\frac{1}{2}}(\llambda)$ are obtained by adding $\frac{1}{2}$ to those of $\cat{B}^s_{\m}(\llambda)$.

\nd Since the $\kappa_i$ are integers, we have :  
 $$\kappa^s_{\m+\frac{1}{2}}(\llambda)=(\kappa'_1\geqslant\cdots\geqslant\kappa'_{s_1}>\kappa_{s_1+1} >\kappa'_{s_1+2}\geqslant\cdots\geqslant \kappa'_{s_2} > \kappa_{s_2+1}+\frac{1}{2}>\kappa'_{s_2+2}\geqslant\cdots
\geqslant\kappa'_{2s+\frac{b}{a}}),$$

\nd where $\kappa'_i=\left\{\begin{array}{rl}\kappa_i+\frac{1}{2} & \text{ if it is on the first row of }\cat{B}^s_{\m}(\llambda)\\
\kappa_i & \text{ otherwise} \end{array}\right.$ and

$$\kappa^s_{\m+\frac{1}{2}}(\mmu)=(\kappa'_1\geqslant\cdots\geqslant\kappa'_{s_1}>\kappa_{s_1+1}+\frac{1}{2} >\kappa'_{s_1+2}\geqslant\cdots\geqslant \kappa'_{s_2} > \kappa_{s_2+1}>\kappa'_{s_2+2}\geqslant\cdots
\geqslant\kappa'_t).$$

\nd And then it is easily seen that $\kappa^s_{\m+\frac{1}{2}}(\llambda)\lhd\kappa^s_{\m+\frac{1}{2}}(\mmu)$ and $\kappa^s_{\m-\frac{1}{2}}(\mmu)\lhd\kappa^s_{\m-\frac{1}{2}}(\llambda)$.\\

\nd If $x_i>y_j$ then by the same manner we obtain : 
$\kappa^s_{\m+\frac{1}{2}}(\mmu)\lhd\kappa^s_{\m+\frac{1}{2}}(\llambda)$ and $\kappa^s_{\m-\frac{1}{2}}(\llambda)\lhd\kappa^s_{\m-\frac{1}{2}}(\mmu).$
\end{proof}

\nd To give the proof of the proposition, we will study the case of a wall $\frac{b}{a}=r+1$, with $r$ even (the proof of the odd case is the same). The parameter $\ttheta'=(-r,r+1)$ belongs to a wall in the right of $A_{-r}=\alpha((\frac{-r}{2},\frac{r}{2}),id,+)$ and in the left of $A_{-r-1}=\alpha((\frac{r}{2}+1,-\frac{r}{2}-1),\sigma,+)$. 

\nd According to the previous lemma and the results of \cite[3.13]{ma:ca:10} and \cite[3.4]{br:fa:02}, we have, if $\m=\left(\frac{b}{a},0\right)$ then :  $$\begin{array}{rcl}\llambda \text{ and } \mmu \text{ are in the same block } & \Longleftrightarrow & \kappa^s_{\m}(\llambda)=\kappa^s_{\m}(\mmu) \\ 
& \Longrightarrow & \exists ~\llambda_0=\llambda,\llambda_1,\dots,\llambda_p=\mmu \text{ such that } \forall i : \\ & & [\kappa^s_{\m+\frac{1}{2}}(\llambda_i)\lhd\kappa^s_{\m+\frac{1}{2}}(\llambda_{i+1})\\
& & \text{and } \kappa^s_{\m-\frac{1}{2}}(\llambda_{i+1})\lhd \kappa^s_{\m-\frac{1}{2}}(\llambda_i)] \text{ or }\\
& & [\kappa^s_{\m+\frac{1}{2}}(\llambda_i)\rhd\kappa^s_{\m+\frac{1}{2}}(\llambda_{i+1})\\
& & \text{and } \kappa^s_{\m-\frac{1}{2}}(\llambda_{i+1})\rhd \kappa^s_{\m-\frac{1}{2}}(\llambda_i)]. \end{array}$$

\nd The parameter $\m-\frac{1}{2}$ is associated with $\ttheta\in A_{-r}=\alpha((\frac{-r}{2},\frac{r}{2}),id,+)$ thus we have to put in relation $\kappa^s_{\m-\frac{1}{2}}(\llambda_i)$ and $\tau_{(\frac{-r}{2},\frac{r}{2})}(^t\llambda_i)$. But $\m-\frac{1}{2}=(r+1-\frac{1}{2},0)=(\frac{r}{2}+(\frac{r}{2}+\frac{1}{2}),-\frac{r}{2}-\frac{1}{2}+(\frac{r}{2}+\frac{1}{2}))$, hence according to Theorem \ref{comb-ord} we have :

$$\begin{array}{rcl} \kappa^s_{\m-\frac{1}{2}}(\llambda_{i+1})\lhd \kappa^s_{\m-\frac{1}{2}}(\llambda_i) & \Longleftrightarrow & \kappa^s_{(\frac{r}{2},-\frac{r}{2}-\frac{1}{2})}(\llambda_{i+1})\lhd \kappa^s_{(\frac{r}{2},-\frac{r}{2}-\frac{1}{2})}(\llambda_i) \\
& \Longleftrightarrow & \tau_{(\frac{-r}{2},\frac{r}{2})}(^t\llambda_i)\lhd  \tau_{(\frac{-r}{2},\frac{r}{2})}(^t\llambda_{i+1}) \\
& \Longleftrightarrow  & \llambda_{i+1}\lhd_\theta \llambda_i, \end{array}$$

\nd because $\s=(\frac{-r}{2},\frac{r}{2})$ and $\m=(\frac{r}{2},-\frac{r}{2}-\frac{1}{2})$ satisfy $m^i=-s_i-\frac{i}{2}$.

\nd The same argument for $\m+\frac{1}{2}$ and $\tthetatilde\in A_{-r-1}=\alpha((\frac{r}{2}+1,-\frac{r}{2}-1),\sigma,+)$ shows that  :

$$\begin{array}{rcl} \kappa^s_{\m+\frac{1}{2}}(\llambda_{i})\lhd \kappa^s_{\m+\frac{1}{2}}(\llambda_{i+1}) & \Longleftrightarrow & \kappa^s_{(0,r+\frac{3}{2})}(\sigma\cdot\llambda_{i})\lhd \kappa^s_{(0,r+\frac{3}{2})}(\sigma\cdot\llambda_{i+1}) \\
& \Longleftrightarrow  &  \kappa^s_{(-\frac{r}{2}-1,\frac{r}{2}+\frac{1}{2})}(\sigma\cdot\llambda_{i})\lhd \kappa^s_{(-\frac{r}{2}-1,\frac{r}{2}+\frac{1}{2})}(\sigma\cdot\llambda_{i+1}) \\
& \Longleftrightarrow  & \tau_{(\frac{r}{2}+1,-\frac{r}{2}-1)}(^t\sigma\cdot\llambda_{i+1}) \lhd  \tau_{(\frac{r}{2}+1,-\frac{r}{2}-1)}(^t\sigma\cdot\llambda_{i}) \\
& \Longleftrightarrow  & \llambda_{i}\lhd_{\widetilde \theta} \llambda_{i+1}. \end{array}$$

\nd Then we have : $$\begin{array}{rcl}\llambda \text{ and } \mmu \text{ are in the same block } & \Longrightarrow & \exists~ \llambda_0=\llambda,\llambda_1,\dots,\llambda_p=\mmu \text{ such that } \forall i : \\ & & (\llambda_i\lhd_{\widetilde \theta}\llambda_{i+1} \text{ and }\llambda_{i+1}\lhd_\theta \llambda_i )\\
& & \text{or } (\llambda_{i+1}\lhd_{\widetilde \theta}\llambda_i \text{ et }\llambda_i \lhd_\theta \llambda_{i+1}), \\
& \Longleftrightarrow & \exists ~\llambda_0=\llambda,\llambda_1,\dots,\llambda_p=\mmu \text{ such that } \forall i : \\ & & (\llambda_i\lhd_\theta\llambda_{i+1} \text{ or } \llambda_i\lhd_{\widetilde \theta}\llambda_{i+1}) \text{ and }  \\ & & (\llambda_{i+1}\lhd_\theta\llambda_i \text{ or }  \llambda_{i+1}\lhd_{\widetilde \theta}\llambda_i).  \end{array}$$

\end{proof}

\begin{Rem}
(i) A direct corollary of this proposition is : \begin{center} if $\llambda \text{ and } \mmu \text{ are in the same block for }\ttheta' \text{ in a wall}$ then $\llambda\lhd_{\theta'}\mmu \text{ and }\mmu\lhd_{\theta'}\llambda.$\end{center}

\nd (ii) The converse is not true. Indeed, for the same data as in the remark following Theorem \ref{comb-desc}, we have $\mmu\lhd_{\theta} \llambda$ and $\llambda\lhd_{\tilde\theta}\mmu$ for $\tthetatilde\in\alpha((1,-1),\sigma,+)$, since $\tau_{(1,-1)} (^t\sigma\cdot\llambda)=(5,4,1,1) \rhd (4, 2, 2, 2, 1)=\tau_{(1,-1)}(^t\sigma\cdot\mmu)$. But $\kappa_{(1,0)}^s(\llambda)\neq\kappa_{(1,0)}^s(\mmu)$ thus $\llambda$ and $\mmu$ are not in the same block for $\ttheta'=(0,1)$.

\nd (iii) This result improves the definition of the combinatorial preorder for $\l=2$ : $$B\lhd_{\theta'} B'~\Longleftrightarrow~\forall~\llambda\in B \text{ et } \forall ~\mmu\in B',~\llambda\lhd_{\theta'}\mmu.$$

\nd (iv) We hope that this proposition is still true for $\l>2$ but the proof should be harder. Indeed, we do not have an easy description of alcoves for $\l>2$ and the proof of our proposition is based on the fact that, for $\l=2$, the parameter $\m\pm\frac{1}{2}$ is a translation of $(-s_i-\frac{i}{\l})_i$ and that is not true in the general case. Nevertheless, a generalization for $\l>2$ of this result would be very interesting in order to generalize the following result.
\end{Rem}

\begin{Cor}
Let $\h'\in \H$ and let $B$, $B'$ be two blocks of $CM_{\h'}(G(2,1,n))$ then we have : $$B\prec_{\theta'} B' ~\Longrightarrow~B\lhd_{\theta'} B'.$$
\end{Cor}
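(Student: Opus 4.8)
The statement to prove is: for $\h' \in \H$ and blocks $B, B'$ of $CM_{\h'}(G(2,1,n))$, one has $B \prec_{\theta'} B' \Rightarrow B \lhd_{\theta'} B'$. The strategy is to combine three ingredients already available: Proposition \ref{theo1} (which unravels $\prec_{\theta'}$ into a zig-zag of $\prec_\theta$-relations along fibres of $\pi_{\theta,\theta'}$), Theorem \ref{theo2} (which converts each $\prec_\theta$ into the combinatorial $\lhd_\theta$ inside an alcove), and the preceding Proposition (which says that two multipartitions lying in the same $\h'$-block are linked by a zig-zag of $\lhd_\theta$, $\lhd_{\widetilde\theta}$ relations running in both directions, equivalently the Remark (i) after it: same block $\Rightarrow \llambda \lhd_{\theta'} \mmu$ and $\mmu \lhd_{\theta'} \llambda$).

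\textbf{Step 1.} Fix an alcove $A$ in front of the wall containing $\ttheta'$, with $\ttheta \in A$, and apply Proposition \ref{theo1}(2) to $B \prec_{\theta'} B'$. This produces multipartitions $\llambda_1, \llambda_2, \llambda_2', \llambda_3', \dots, \llambda_m$ with $\llambda_1 \in B$, $\llambda_m \in B'$, each consecutive pair $\llambda_i, \llambda_i'$ in the same $\h'$-block, and $\llambda_i \prec_\theta \llambda_{i+1}$ (resp. $\llambda_i' \prec_\theta \llambda_{i+1}'$) according to the parity of $i$ — exactly the chain already written out in the proof of the Theorem on the $\a$-function.

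\textbf{Step 2.} By Theorem \ref{theo2}, each relation $\llambda_i \prec_\theta \llambda_{i+1}$ implies $\llambda_i \lhd_\theta \llambda_{i+1}$, and in particular $\llambda_i \lhd_{\theta'} \llambda_{i+1}$ by definition of the combinatorial preorder $\lhd_{\theta'}$ (since $\ttheta$ lies in an alcove $A_j$ next to the wall of $\ttheta'$). For the "same block" links $\llambda_i, \llambda_i'$, apply Remark (i) following the preceding Proposition (valid precisely because $\l=2$): $\llambda_i$ and $\llambda_i'$ in the same $\h'$-block gives $\llambda_i \lhd_{\theta'} \llambda_i'$. Concatenating all these $\lhd_{\theta'}$-links along the chain $\llambda_1, \llambda_2 \to \llambda_2', \llambda_3' \to \dots \to \llambda_m$ and using that $\lhd_{\theta'}$ is transitive by construction, we obtain $\llambda_1 \lhd_{\theta'} \llambda_m$ with $\llambda_1 \in B$, $\llambda_m \in B'$, i.e. $B \lhd_{\theta'} B'$ by the definition of $\lhd_{\theta'}$ on blocks (indeed, by Remark (iii) after the Proposition, for $\l=2$ this holds for all $\llambda \in B$, $\mmu \in B'$).

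\textbf{Main obstacle.} The only delicate point is making sure the chain genuinely closes up into a single $\lhd_{\theta'}$-relation: the links $\llambda_i \prec_\theta \llambda_{i+1}$ alternate between using the parameter $\ttheta$ for odd $i$ and — after passing to $\llambda_i'$ — still $\ttheta$ (the same alcove) for even $i$, so all of them feed into $\lhd_{\theta'}$ via the same alcove $A_j$; and the "same block" steps are absorbed by Remark (i). No new combinatorics is needed beyond the preceding Proposition; the corollary is essentially a bookkeeping assembly of Proposition \ref{theo1}, Theorem \ref{theo2}, and that Proposition, so the proof is short.

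\begin{proof}
Let $\ttheta$ lie in an alcove in front of the wall containing $\ttheta'$. By Proposition \ref{theo1}, $B\prec_{\theta'}B'$ yields $\llambda_1,\llambda_2,\llambda_2',\llambda_3',\dots,\llambda_m\in\mathcal{P}(2,n)$ with $\llambda_1\in B$, $\llambda_m\in B'$, with $\llambda_i$ and $\llambda_i'$ in the same $\h'$-block for $2\lq i\lq m-1$, and with $\llambda_i\prec_\theta\llambda_{i+1}$ if $i\notin 2\Z$ and $\llambda_i'\prec_\theta\llambda_{i+1}'$ if $i\in2\Z$. By Theorem \ref{theo2}, each such $\prec_\theta$ gives the corresponding $\lhd_\theta$, hence $\llambda_i\lhd_{\theta'}\llambda_{i+1}$ (resp. $\llambda_i'\lhd_{\theta'}\llambda_{i+1}'$) by definition of the combinatorial preorder. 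By Remark (i) following the previous proposition, $\llambda_i$ and $\llambda_i'$ being in the same block for $\ttheta'$ implies $\llambda_i\lhd_{\theta'}\llambda_i'$ and $\llambda_i'\lhd_{\theta'}\llambda_i$. Concatenating these relations along the chain and using transitivity of $\lhd_{\theta'}$, we get $\llambda_1\lhd_{\theta'}\llambda_m$ with $\llambda_1\in B$ and $\llambda_m\in B'$, that is $B\lhd_{\theta'}B'$.
\end{proof}
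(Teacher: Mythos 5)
Your proof is correct and follows essentially the same route as the paper: unravel $B\prec_{\theta'}B'$ via Proposition \ref{theo1}(2) into a zig-zag of $\prec_{\theta}$-links and same-block links, convert each $\prec_{\theta}$ into $\lhd_{\theta}$ (hence $\lhd_{\theta'}$) by Theorem \ref{theo2}, absorb the same-block steps via Remark (i) after the preceding Proposition (the point where $\l=2$ is used), and concatenate by transitivity of the combinatorial preorder. No discrepancies with the paper's argument.
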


\begin{proof} Let $\ttheta$ be inside an alcove next to the wall where $\ttheta'$ lives. According to Proposition \ref{theo1} and Theorem \ref{theo2} :
$$\begin{array}{rcl} B\prec_{\theta'} B'
& \Longleftrightarrow & \exists~ \llambda_1\in B, \llambda_2, \llambda_2',\llambda_3',\dots\text{ and }\llambda_m\in B' \text{ such that : }\\
& & \llambda_i \text{ and }\llambda'_i \text{ are in the same block for all }i>1\\
& & \llambda_i\prec_{\theta} \llambda_{i+1} \text{ for } i \text{ odd} \\
& & \llambda'_i\prec_{\theta} \llambda'_{i+1} \text{ for } i \text{ even}. \\
\end{array}$$

\nd But according to Theorem \ref{comb-desc}, $\llambda_i\prec_{\theta} \llambda_{i+1} \Longrightarrow \llambda_i\lhd_{\theta} \llambda_{i+1} $, hence using the previous remark, we obtain : 

$$\begin{array}{rcl} B\prec_{\theta'} {B'} & \Longrightarrow & \exists~ \llambda_1\in B, \llambda_2, \llambda_2',\llambda_3',\dots\text{ and }\llambda_m\in B' \text{ such that : }\\
& & \llambda_i \lhd_{\theta'}\llambda'_i \text{ and } \llambda_i'\lhd_{\theta'}\llambda_i \text{ for all }i>1\\
& & \llambda_i\lhd_{\theta} \llambda_{i+1} \text{ for } i \text{ odd} \\
& & \llambda'_i\lhd_{\theta} \llambda'_{i+1} \text{ for } i \text{ even}, \\\\

& \Longrightarrow & \exists~ \llambda_1\in B, \llambda_2, \llambda_2',\llambda_3',\dots\text{ and }\llambda_m\in B' \text{ such that : }\\
& & \llambda_1\lhd_{\theta}\llambda_2\lhd_{\theta'}\llambda'_2\lhd_{\theta}\llambda'_3\lhd_{\theta'}\cdots\lhd_{\theta'}\llambda_m,\\\\

& \Longrightarrow & \exists~ \llambda_1\in B,~ \llambda_m\in B' \text{ such that } \llambda_1\lhd_{\theta'}\llambda_m,\\\\

& \Longrightarrow & B\lhd_{\theta'}B'.

\end{array}$$

\end{proof}

\end{abs}


\section{Different orders on $CM_{\h'}(G(\l,e,n))$.}\label{glen}

Now it is time to try to generalize all these constructions and properties to the whole family of imprimitive reflexion groups.
Let $e\in\Z_{>0}$ be such that $e\mid \l$ and let $p=\frac{\l}{e}$. Let $A(\l,e,n)$ be the group of all diagonal matrices whose diagonal entries are powers of $\zeta_\l=\exp\left(\frac{2\pi\sqrt{-1}}{\l}\right)$ and whose determinant is a $p^\text{th}$ root of unity. The group $\cat{S}_n$, considered as the group of all $n\times n$ permutation matrices, normalizes $A(\l,e,n)$ and the imprimitive group $K=G(\l,e,n)$ is defined to be the semidirect product of $A(\l,e,n)$ by $\cat{S}_n$. Moreover $K$ is a normal subgroup of $W=G(\l,1,n)$ and the quotient group $W/K$ is the cyclic group $C_e$. We denote by $\res$ the restriction functor $\C[W]\rightarrow \C[K]$.

\begin{abs} Let us describe the Calogero-Moser partition of $K$, following \cite{be:th:09}. Let $\h'\in\H$ such that $$H'_{j+p}=H'_j, ~ \forall~ 0\lq j \lq\l-1$$ then $\h'$ belongs to a GIT wall, $H_{\h'}(K)$ is a subalgebra of $H_{\h'}(W)$ and the $CM_{\h'}(K)$ partition is described as follows.

\begin{Th}[Bellamy]
Let $B$ be a block of $CM_{\h'}(W)$.

\begin{enumerate}[(i)]
\item If $\llambda$ is a $e$-stuttering $\l$-multipartition of $n$ such that $B=\{\llambda\}$ then $$\Gamma(B):=\{\mmu \in \irr(K) \text{ occurring as a summand of } \res \llambda, \text{ for }\llambda\in B\}$$ is a disjoint union of $|\widehat{C}_\lambda|$ blocks (where $\widehat{C}_\lambda$ is the stabilizer of $\llambda$ in $\ho(C_e,\C^*)$ with respect to $\llambda$),
\item otherwise $\Gamma(B)$ is a $CM_{\h'}(K)$ block.
\end{enumerate}
\end{Th}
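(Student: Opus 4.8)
The plan is to use Clifford theory for the inclusion $K \triangleleft W$ with cyclic quotient $C_e = W/K$, together with the compatibility of the Calogero–Moser partition with this inclusion established in \cite{be:th:09}. First I would recall that since $H'_{j+p}=H'_j$ for all $j$, the subalgebra $H_{\h'}(K)\subset H_{\h'}(W)$ is stable under the natural action of $C_e$, and passing to the restricted algebras one gets that $\overline{H_{\h'}(K)}$ is a $C_e$-graded subalgebra with $\overline{H_{\h'}(W)}$ built from it by the crossed-product/skew-group-algebra construction. The key structural input is that restriction of blocks behaves well: a block $B$ of $\overline{H_{\h'}(W)}$ restricts to a $C_e$-orbit of blocks of $\overline{H_{\h'}(K)}$, and conversely every block of $\overline{H_{\h'}(K)}$ is covered by a unique block of $\overline{H_{\h'}(W)}$. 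Thus $\Gamma(B)$ is automatically a union of $\overline{H_{\h'}(K)}$-blocks permuted transitively by $C_e$, and the only question is counting them, i.e. determining the stabilizer in $C_e$ of a block of $\Gamma(B)$.

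Next I would analyze the decomposition of $\res L_{\h'}(E_\lambda)$ for $\llambda \in B$. By Clifford theory at the level of simple modules, $\res L_{\h'}(E_\lambda)$ decomposes as a direct sum of $[C_e : C_\lambda]$ non-isomorphic simples, each appearing with multiplicity governed by $\widehat{C}_\lambda$, where $C_\lambda$ is the stabilizer of $E_\lambda$ under the $C_e$-action on $\irr W$ and $\widehat{C}_\lambda = \ho(C_e/C_\lambda^\perp,\C^*)$-type dual group; concretely the summands of $\res\llambda$ are indexed by $\widehat{C}_\lambda$. The action of $C_e$ on $\irr K$ permutes these summands, and two summands of $\res\llambda$ lie in the same $\overline{H_{\h'}(K)}$-block if and only if they are linked through the block structure — which, by the covering relation, happens exactly when they lie in the same $\overline{H_{\h'}(W)}$-block. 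So I must distinguish two cases according to whether $B$ is a singleton block consisting of a single $C_e$-fixed (``$e$-stuttering'', i.e. $C_\lambda = C_e$) multipartition, or not.

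In case (ii), where $B$ is either not a singleton or its unique element is not $e$-stuttering, I would argue that all the $C_e$-translates and all the constituents appearing in $\res(B)$ get glued into a single block: if $\llambda \in B$ is not $e$-stuttering then $\res\llambda$ is already a single $K$-module-orbit whose members are necessarily Calogero–Moser-linked (they are summands of one $W$-simple restricted, hence in one $W$-block, hence — by the covering property together with the $C_e$-transitivity — in one $K$-block); and if $B$ contains at least two elements, the $W$-block structure already links distinct $\llambda,\llambda'\in B$, forcing all of $\Gamma(B)$ into one block. In case (i), where $B = \{\llambda\}$ with $\llambda$ $e$-stuttering, there is nothing external to glue the $|\widehat{C}_\lambda|$ summands of $\res\llambda$ together, so one shows they genuinely lie in distinct blocks — this uses that the block idempotents of $\overline{H_{\h'}(W)}$, restricted to $\overline{H_{\h'}(K)}$, decompose according to the central characters, and the central character of $L_{\h'}(E_\lambda)$ restricted to $Z(\overline{H_{\h'}(K)})$ already separates the $\widehat{C}_\lambda$ constituents. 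The main obstacle is precisely this last separation statement in case (i): one must verify that no nontrivial linkage occurs among the $|\widehat{C}_\lambda|$ pieces, i.e. that the baby Verma modules for $\overline{H_{\h'}(K)}$ attached to these constituents have pairwise distinct blocks; this is where the hypothesis $H'_{j+p}=H'_j$ and the explicit description of $Z(\overline{H_{\h'}(K)})$ via symmetric functions genuinely enter, and it is the part that does not follow from soft Clifford theory alone.
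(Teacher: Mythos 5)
The paper does not actually prove this statement --- it is imported verbatim from \cite[4.11]{be:th:09} --- so your proposal has to be measured against Bellamy's argument rather than anything in the text. Your Clifford-theoretic skeleton is the correct starting point and does match Bellamy's setup: $\overline{H_{\h'}(W)}$ is a crossed product of $\overline{H_{\h'}(K)}$ by $C_e$, every $K$-block is covered by a unique $W$-block, and the $K$-blocks covered by a fixed $W$-block $B$ form a single transitive $C_e$-orbit. But your declared ``key structural input'' --- that two constituents of $\res\llambda$ lie in the same $\overline{H_{\h'}(K)}$-block if and only if they lie in the same $\overline{H_{\h'}(W)}$-block --- is false, and it contradicts part (i) of the very theorem you are proving: there all $|\widehat{C}_\lambda|$ constituents of $\res\llambda$ sit inside the single $W$-block $B=\{\llambda\}$ and yet occupy $|\widehat{C}_\lambda|$ distinct $K$-blocks. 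The covering relation only tells you that the $K$-blocks meeting $\Gamma(B)$ form one $C_e$-orbit; it says nothing about whether that orbit is a single $C_e$-fixed block or a free orbit of size $e$, and that dichotomy is the entire content of the theorem. Consequently your case (ii) gluing (``summands of one $W$-simple, hence one $W$-block, hence one $K$-block'', and likewise the linkage via distinct $\llambda,\llambda'\in B$) proves nothing, while your case (i) reduces to the separation statement you yourself flag as unproven. Note also that a non-$e$-stuttering $\llambda$ can still have nontrivial stabilizer $\widehat{C}_\lambda$ (proper periodicity of $(\llambda_1,\dots,\llambda_e)$), so in case (ii) $\res\llambda$ need not be irreducible, which your sketch tacitly assumes.

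The missing idea is the one Bellamy actually supplies: $Z(H_{\h'}(W))=Z(H_{\h'}(K))^{C_e}$, so that $\spec Z(H_{\h'}(K))\to\spec Z(H_{\h'}(W))$ is the quotient by $C_e$, the blocks of the two restricted algebras are the closed points of the respective fibres over $0\in(\cat{h}\times\cat{h}^*)/W$, and the $K$-blocks over $B$ are literally one $C_e$-orbit of points whose cardinality is $e$ divided by the order of the stabilizer. The theorem then reduces to computing that stabilizer, which Bellamy does by passing to generic parameters --- where blocks are singletons in bijection with $\C^*$-fixed points labelled by $\mathcal{P}(\l,n)$ and the $C_e$-action is the explicit rotation of the components $\llambda_1,\dots,\llambda_e$ --- and tracking the degeneration back to the wall. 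That is where the dichotomy ``free orbit for $e$-stuttering singletons, fixed point otherwise'' comes from; it is not recoverable from soft covering theory or from restricting central idempotents, which is exactly the gap your plan leaves open in both cases, not only in case (i).
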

 
 \begin{Rem}
 (i) Let $\llambda=(\lambda^1,\dots,\lambda^\l)\in\mathcal{P}(\l,n)$. We rewrite $\llambda$ as $\llambda=(\llambda_1,\dots,\llambda_e)$ where $\llambda_i=(\lambda^{(i-1)p+1},\dots,\lambda^{ip})$. We say that $\llambda$ is \textit{$e$-stuttering} if $\llambda_i=\llambda_j$ for all $1\lq i,j\lq e$.
 
\nd (ii) This result comes from \cite[4.11]{be:th:09} which can be generalized to the case where $W/K$ is abelian, see \cite{li:al:12}.

\nd (iii) If there exists a $e$-stuttering $\l$-multipartition of $n$ then $e\mid n$. Therefore if $e\nmid n$ then the Calogero-Moser partitions of $G(\l,1,n)$ and $G(\l,e,n)$ are in one-to-one correspondence.
 \end{Rem}
 \end{abs}
 
 \begin{abs}{\bf The $e\nmid n$ case.} \hspace{0,05cm} In order to describe geometrically $CM_{\h'}(G(\l,e,n))$ for  $e\nmid n$, we have to find a variety $\M_{\theta'}(e,n)$ such that : \begin{itemize} 
 \item there exists a $\C^*$-equivariant morphism  $\M_{\theta'}(e,n)\twoheadrightarrow (\cat{h}\times\cat{h}^*)/G(\l,e,n)$,
 \item $(\M_{\theta'}(e,n))^{\C^*}$ is in one-to-one correspondence with $(\M_{\theta'}(n))^{\C^*}$.
 \end{itemize}
 But we have the following diagram : \[ \xymatrix{ \M_{\theta'}(n) \ar@{->>}[rd]_{\pi_{\theta',0}} & & (\cat{h}\times\cat{h}^*)/G(\l,e,n) \ar@{->>}[ld]^{p}\\
 &  (\cat{h}\times\cat{h}^*)/G(\l,1,n) & }  \] where $p$ corresponds to the quotient by the cyclic group $C_e\simeq G(\l,1,n)/G(\l,e,n)$. Thus we can consider the reduced variety associated with the fiber product $\M_{\theta'}(n)\times_{(\cat{h}\times\cat{h}^*)/G(\l,1,n)}(\cat{h}\times\cat{h}^*)/G(\l,e,n)$ : $$\M_{\theta'}(e,n):= \{(m,x)\in\M_{\theta'}(n)\times(\cat{h}\times\cat{h}^*)/G(\l,e,n), \text{ such that } \pi_{\theta',0}(m)=p(x)\}.$$ This variety is the one we were looking for because the only fixed point of $(\cat{h}\times\cat{h}^*)/G(\l,e,n)$ for the hyperbolic $\C^*$-action is $0$. Therefore, if we consider the diagonal $\C^*$-action on $\M_{\theta'}(e,n)$ then $$(\M_{\theta'}(e,n))^{\C^*}\leftrightarrow(\M_{\theta'}(n))^{\C^*}.$$ Moreover, since the following diagram commutes 
 
 \[ \xymatrix{ & \M_{\theta'}(e,n) \ar@{->>}[ld]_{f} \ar@{->>}[rd] & \\
\M_{\theta'}(n) \ar@{->>}[rd]_{\pi_{\theta',0}} & &( \cat{h}\times\cat{h}^*)/G(\l,e,n)\ar@{->>}[ld]^{/C_e}\\
 &  (\cat{h}\times\cat{h}^*)/G(\l,1,n) & }  \]
  we have : $$\M_{\theta'}(n)\simeq\M_{\theta'}(e,n)/C_e,$$ where $C_e$ acts on the second component $x\in(\cat{h}\times\cat{h}^*)/G(\l,e,n)$ of $\M_{\theta'}(e,n)$.
 
 \begin{prop}
 The variety $\M_{\theta'}(e,n)$ is irreducible.
 \end{prop}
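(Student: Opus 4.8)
The plan is to deduce irreducibility of $\M_{\theta'}(e,n)$ from the irreducibility of $\M_{\theta'}(n)$ together with the fact that the map $f\colon\M_{\theta'}(e,n)\to\M_{\theta'}(n)$ is, over a dense open subset, a trivial $C_e$-fiber-bundle-like object — more precisely, that $f$ is finite, surjective, and the total space admits a transitive-on-fibers $C_e$-action whose quotient recovers $\M_{\theta'}(n)$. First I would record that $\M_{\theta'}(n)$ is irreducible: it is the quiver variety of \S\ref{qu-var}, and since $\mu_\C^{-1}(0)$ (for a quiver of affine type, via Crawley-Boevey's theorem, as already invoked in Proposition \ref{normale} through \cite{cr:no:03}) is irreducible, its GIT quotient $\M_{\theta'}(n)$ is irreducible; alternatively one can cite that $\M_0(n)\simeq(\cat h\times\cat h^*)/W$ is irreducible (it is a quotient of an affine space) and $\pi_{\theta',0}$ is surjective with irreducible fibers, so $\M_{\theta'}(n)$ is irreducible.

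Next I would analyze the fiber product directly. Write $Y_1=(\cat h\times\cat h^*)/G(\l,1,n)$ and $Y_e=(\cat h\times\cat h^*)/G(\l,e,n)$, so that $p\colon Y_e\to Y_1$ is the quotient by $C_e$ and is therefore finite and surjective; $Y_e$ is irreducible (again a quotient of an affine space). By construction $\M_{\theta'}(e,n)$ is the reduced fiber product of $\pi_{\theta',0}\colon\M_{\theta'}(n)\to Y_1$ and $p\colon Y_e\to Y_1$, hence $f\colon\M_{\theta'}(e,n)\to\M_{\theta'}(n)$ is the base change of $p$, so $f$ is finite and surjective. Moreover $C_e$ acts on $\M_{\theta'}(e,n)$ through the second coordinate, this action is free over the locus where $p$ is étale, and the commuting diagram in the text shows $\M_{\theta'}(n)\simeq\M_{\theta'}(e,n)/C_e$. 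Thus the irreducible components of $\M_{\theta'}(e,n)$ are permuted transitively by $C_e$ (their images under $f$ all equal the irreducible $\M_{\theta'}(n)$, and $C_e$ acts transitively on the fiber of $f$ over a generic point), and since a single component already surjects onto $\M_{\theta'}(n)$, the number of components must divide the generic fiber cardinality of $f$.

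The key step, and the main obstacle, is to show there is only \emph{one} component rather than several conjugate ones — equivalently, that $f$ is generically connected, i.e.\ that over the étale locus of $p$ the covering $f$ does not split. For this I would locate a point $m\in\M_{\theta'}(n)$ lying over a point $y\in Y_1$ at which $p$ is étale (such $y$ exist: $p$ is étale over the complement of the branch locus, which is a proper closed subset of $Y_1$, and $\pi_{\theta',0}$ is dominant), and over which additionally $\pi_{\theta',0}$ is an isomorphism or at least has connected fiber — near such $m$ the map $f$ looks like the pullback of the connected étale cover $Y_e\to Y_1$, hence is itself connected on that chart. The cleanest route is to exhibit explicitly a point of $(\cat h\times\cat h^*)$ with trivial $G(\l,1,n)$-stabilizer: then its image $y$ in $Y_1$ has $p^{-1}(y)$ a single free $C_e$-orbit, $\pi_{\theta',0}^{-1}(y)$ is a single smooth point of $\M_{\theta'}(n)$ (the stable, free locus), and $f^{-1}$ of that point is a single free $C_e$-orbit, so locally $\M_{\theta'}(e,n)$ is an irreducible $C_e$-torsor over an open set of $\M_{\theta'}(n)$; since that open set is dense and irreducible and meets every component, there is exactly one component. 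I would then conclude that $\M_{\theta'}(e,n)$ is irreducible.
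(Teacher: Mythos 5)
Your argument is correct and follows essentially the same route as the paper: both identify $f^{-1}(U)$, for $U$ the locus where $\pi_{\theta',0}$ is an isomorphism, with an open subset of the irreducible variety $(\cat{h}\times\cat{h}^*)/G(\l,e,n)$, and then use finiteness of $f$ together with the $C_e$-action (transitive on fibers, with $C_e$-stable $f^{-1}(U)$) to conclude that this single irreducible piece accounts for every component. The only cosmetic difference is your extra restriction to the \'etale locus of $p$, which is unnecessary since irreducibility of $p^{-1}(V)$ already follows from irreducibility of $(\cat{h}\times\cat{h}^*)/G(\l,e,n)$.
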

 
 \begin{Rem}
 We can construct the variety $\M_{\theta'}(e,n)$ in the same manner for all parameters $e$ and $\ttheta\in\Q^\l$. This variety keeps its properties but its fixed points do not describe $CM_{\h'}(G(\l,e,n))$ in general.
 \end{Rem}

 \begin{proof}
 By construction of $\pi_{\theta',0}$ (see \cite[2.12]{na:qu:09}) there exists two open sets $U$ and $V$ such that $\pi_{\theta',0|U}~:~U\rightarrow V$ is an isomorphism. Therefore if we denote by $f$ the projection $\M_{\theta'}(e,n)\twoheadrightarrow \M_{\theta'}(n)$ then $\overline{f^{-1}(U)}$ is an irreducible component of $\M_{\theta'}(e,n)$ of maximal dimension. Moreover $f$ is a finite morphism thus all the irreducible components of $\M_{\theta'}(e,n)$ are of the form $\eta\cdot \overline{f^{-1}(U)}$ with $\eta \in C_e$. But $f^{-1}(U)$ is stable under the action of $C_e$, thus $\overline{f^{-1}(U)}$ is the unique irreducible component of $\M_{\theta'}(e,n)$.
 \end{proof}
 
 The fact that $\M_{\theta'}(n)\simeq\M_{\theta'}(e,n)/C_e$ and that the action of $C_e$ on $\C^*$-fixed points is trivial imply, according to Corollary \ref{2.2}, that the orderings defined on $(\M_{\theta'}(e,n))^{\C^*}$ and $(\M_{\theta'}(n))^{\C^*}$ by the Bialynicki-Birula decomposition are equivalent. Therefore, if we denote by $\{\Gamma(B),B\in CM_{\h'}(G(\l,1,n))\}$ the partition $CM_{\h'}(G(\l,e,n))$ and if we define \textit{the geometric order on $CM_{\h'}(G(\l,e,n))$} in the following way : for $\Gamma(B)$, $\Gamma(B')\in CM_{\h'}(G(\l,e,n))$ : $$\Gamma(B)\prec_{\theta',e} \Gamma(B')~\Longleftrightarrow~X_{\theta',e}(B)\prec X_{\theta',e}(B'),$$

\nd where $X_{\theta',e}(B)$ is the $\C^*$-fixed point of $\M_{\theta'}(e,n)$ corresponding to $\Gamma(B)$, then we have the following result. 

\begin{Th}\label{ordres-geo-G(l,e,n)} Let $B$, $B'$ be two blocks of $CM_{\h'}(G(\l,1,n))$, then :  $$B\prec_{\theta'} B'~\Longleftrightarrow~\Gamma(B)\prec_{\theta',e}\Gamma(B').$$
\end{Th}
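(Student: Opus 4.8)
The plan is to deduce this directly from the general comparison result for $\C^*$-fixed points under a finite quotient. Recall that in \S\ref{glen} we have established the isomorphism $\M_{\theta'}(n)\simeq\M_{\theta'}(e,n)/C_e$, where $C_e$ acts on $\M_{\theta'}(e,n)$ through the second factor, that this action commutes with the diagonal $\C^*$-action, and that the induced action of $C_e$ on $(\M_{\theta'}(e,n))^{\C^*}$ is trivial (since $0$ is the unique $\C^*$-fixed point of $(\cat{h}\times\cat{h}^*)/G(\l,e,n)$). Both varieties are normal: $\M_{\theta'}(n)$ by Proposition \ref{normale}, and $\M_{\theta'}(e,n)$ is a finite cover of a normal variety which is moreover irreducible by the preceding proposition, so one still has the order relation $\prec$ on its set of $\C^*$-fixed points by the Theorem of \S\ref{para1} applied to $\M_{\theta'}(e,n)$ (one should check, or recall from that section, that normality is what is used there; since $\M_{\theta'}(e,n)$ is a closed subvariety of $\M_{\theta'}(n)\times(\cat{h}\times\cat{h}^*)/G(\l,e,n)$, it is at least quasiprojective with finitely many $\C^*$-fixed points, and the Sumihiro embedding argument of \S\ref{para1} applies once normality is granted, which follows from the finiteness of $f$ over a normal variety together with irreducibility, or can be seen locally as in the proof of Proposition \ref{normale}).

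First I would invoke the Corollary at the end of \S\ref{2.2}: with $X=\M_{\theta'}(e,n)$, $G=C_e$, $Y=X/G=\M_{\theta'}(n)$ and $\pi=f$ the quotient morphism, one has for $x,x'\in X^{\C^*}$ that $f(x)\prec f(x')\Longleftrightarrow \exists\,g\in C_e$ with $x\prec gx'$. Now apply this to the fixed points $x=X_{\theta',e}(B)$ and $x'=X_{\theta',e}(B')$: their images under $f$ are exactly the $\C^*$-fixed points $x_{\theta'}(B)$ and $x_{\theta'}(B')$ of $\M_{\theta'}(n)$ labelling the Calogero--Moser blocks $B$ and $B'$ (this is the content of the bijection $(\M_{\theta'}(e,n))^{\C^*}\leftrightarrow(\M_{\theta'}(n))^{\C^*}$ from \S\ref{glen}, which is precisely the correspondence $\Gamma(B)\leftrightarrow B$). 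Since $C_e$ acts trivially on $X^{\C^*}$, we have $gx'=x'$ for every $g\in C_e$, so the right-hand side of the Corollary collapses to simply $x\prec x'$. Therefore $f(x)\prec f(x')\Longleftrightarrow x\prec x'$, which by the two definitions of the geometric orders reads exactly $B\prec_{\theta'}B'\Longleftrightarrow\Gamma(B)\prec_{\theta',e}\Gamma(B')$.

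The main obstacle, and the only point requiring genuine care, is the verification that the Corollary of \S\ref{2.2} genuinely applies here, i.e. that $\M_{\theta'}(e,n)$ is a normal variety so that $\prec$ is a bona fide order relation on $(\M_{\theta'}(e,n))^{\C^*}$ and the Bialynicki--Birula--type comparison machinery of \S\ref{para1}--\S\ref{2.2} is available; and that the $C_e$-action there is regular and commutes with $\C^*$ as required, which is immediate from the construction but should be stated. If one prefers to avoid asserting normality of $\M_{\theta'}(e,n)$ outright, an alternative is to not use the Corollary but instead Proposition \ref{theo1} together with the triviality of the $C_e$-action on fixed points: part (1) of Proposition \ref{theo1} gives $\Gamma(B)\prec_{\theta',e}\Gamma(B')\Rightarrow B\prec_{\theta'}B'$ directly (this direction needs only $\M_{\theta'}(e,n)$ quasiprojective with finite fixed-point set and $f$ projective surjective $\C^*$-equivariant, all of which hold by construction), while part (2) produces a chain $x_1,x_2,x_2',\dots,x_m$ in $X^{\C^*}$ with $f(x_i)=f(x_i')$ forcing, by triviality of the $C_e$-action, $x_i=x_i'$; hence the chain of $\prec$-relations in $X^{\C^*}$ telescopes to $x\prec x'$, giving the converse $B\prec_{\theta'}B'\Rightarrow\Gamma(B)\prec_{\theta',e}\Gamma(B')$. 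Either route yields the stated equivalence; I would present the one via the Corollary of \S\ref{2.2} as the cleanest, remarking that normality of $\M_{\theta'}(e,n)$ can be checked locally exactly as in the proof of Proposition \ref{normale}.
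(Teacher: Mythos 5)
Your argument is exactly the paper's: the theorem follows from the isomorphism $\M_{\theta'}(n)\simeq\M_{\theta'}(e,n)/C_e$, the triviality of the $C_e$-action on the $\C^*$-fixed points, and the Corollary of \S\ref{2.2}. Your additional care about the normality of $\M_{\theta'}(e,n)$ (needed so that $\prec$ is a genuine order on its fixed points) addresses a point the paper leaves implicit, and your justification of it is sound.
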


Therefore it is natural to define a \textit{combinatorial ordering on $CM_{\h'}(G(\l,e,n))$} in the following way : for $\Gamma(B)$, $\Gamma(B')\in CM_{\h'}(G(\l,e,n))$ : $$\Gamma(B)\lhd_{\theta',e} \Gamma(B')~\Longleftrightarrow~B\lhd_{\theta'} B'.$$
 Moreover, by construction, the $\fc$-function is constant inside the Calogero-Moser blocks of a complex reflexion group and it is such that : $\fc_{\h'}(B)=\fc_{\h'}(\Gamma(B))$. We can check that this property still holds for the $\a$-function on $\irr G(\l,e,n)$ and $\irr G(\l,1,n)$, according to \cite[Lem. A.7.1 and prop. 2.3.15]{ch:bl:09}. Thanks to all these constructions, it is easy to generalize the results of \ref{order}.
 
 \begin{Th}\label{touslesordresg(l,e,n)}
Let $\Gamma(B)$, $\Gamma(B')$ be two blocks of  $CM_{\h'}(G(\l,e,n))$ then $$\Gamma(B)\prec_{\theta',e} \Gamma(B') ~\Longrightarrow~\left\{\begin{array}{rcl}\a_{\h'}(\Gamma(B))&\lq&\a_{\h'} (\Gamma(B'))\\
\fc_{\h'}(\Gamma(B))&\gq&\fc_{\h'} (\Gamma(B')). \end{array}\right.$$ and for $\l=2$ : $$\Gamma(B)\prec_{\theta',e}\Gamma( B') ~\Longrightarrow~\Gamma(B)\lhd_{\theta',e} \Gamma(B').$$
\end{Th}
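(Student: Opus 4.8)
\nd The plan is to reduce everything to the already-proved statements about blocks of $CM_{\h'}(G(\l,1,n))$, using that $\Gamma$ is a bijection $CM_{\h'}(G(\l,1,n))\leftrightarrow CM_{\h'}(G(\l,e,n))$ in the $e\nmid n$ case and that it is compatible with both the geometric order (Theorem \ref{ordres-geo-G(l,e,n)}) and the numerical invariants. No genuinely new geometry or combinatorics is needed: the substantive content — normality of the $\M_{\theta'}(n)$, the isomorphism $\M_{\theta'}(n)\simeq\M_{\theta'}(e,n)/C_e$ together with triviality of the $C_e$-action on $\C^*$-fixed points, and the combinatorial identification of $\prec_{\theta'}$ for $\l=2$ — has been established in the previous sections.

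\nd For the first assertion, I would argue as follows. Assume $\Gamma(B)\prec_{\theta',e}\Gamma(B')$. By Theorem \ref{ordres-geo-G(l,e,n)} this is equivalent to $B\prec_{\theta'}B'$ in $CM_{\h'}(G(\l,1,n))$. Applying the first Theorem of \S\ref{order}, which compares $\prec_{\theta'}$ with the $\a$- and $\fc$-functions on the Calogero--Moser blocks of $G(\l,1,n)$, gives $\a_{\h'}(B)\lq\a_{\h'}(B')$ and $\fc_{\h'}(B)\gq\fc_{\h'}(B')$. Finally I would invoke the compatibility of these invariants with the restriction functor recalled just before the statement: the $\fc$-function satisfies $\fc_{\h'}(B)=\fc_{\h'}(\Gamma(B))$, and by \cite[Lem. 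A.7.1 and Prop. 2.3.15]{ch:bl:09} the analogous identity $\a_{\h'}(B)=\a_{\h'}(\Gamma(B))$ holds for the $\a$-function (and likewise for $B'$). Substituting these equalities into the two inequalities yields $\a_{\h'}(\Gamma(B))\lq\a_{\h'}(\Gamma(B'))$ and $\fc_{\h'}(\Gamma(B))\gq\fc_{\h'}(\Gamma(B'))$.

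\nd For the case $\l=2$, the same first step gives $\Gamma(B)\prec_{\theta',e}\Gamma(B')\Longleftrightarrow B\prec_{\theta'}B'$ by Theorem \ref{ordres-geo-G(l,e,n)}. The Corollary at the end of \S\ref{order} (valid for the Weyl groups of type $B_n$) then gives $B\prec_{\theta'}B'\Rightarrow B\lhd_{\theta'}B'$, and by the very definition of the combinatorial ordering on $CM_{\h'}(G(\l,e,n))$ one has $B\lhd_{\theta'}B'\Longleftrightarrow\Gamma(B)\lhd_{\theta',e}\Gamma(B')$. Chaining these three steps produces $\Gamma(B)\prec_{\theta',e}\Gamma(B')\Rightarrow\Gamma(B)\lhd_{\theta',e}\Gamma(B')$, as desired.

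\nd The only point requiring a little care — and hence the main (mild) obstacle — is verifying that the cited properties of the $\a$-function really do pass through the $\Gamma$-correspondence, i.e. that $\a_{\h'}$ is constant on $\Gamma(B)$ and takes the same value as on $B$; this is exactly what \cite[Lem. A.7.1 and Prop. 2.3.15]{ch:bl:09} provide once one knows, as in Remark (iii) following Bellamy's theorem, that for $e\nmid n$ each $\Gamma(B)$ is a single $CM_{\h'}(K)$-block. Everything else is a formal concatenation of implications.
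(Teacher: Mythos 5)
Your proposal is correct and follows exactly the route the paper intends: the paper gives no explicit proof, saying only that ``it is easy to generalize the results of \ref{order}'', and your chain of reductions --- Theorem \ref{ordres-geo-G(l,e,n)} to pass from $\prec_{\theta',e}$ to $\prec_{\theta'}$, the theorem and corollary of \S\ref{order} on $CM_{\h'}(G(\l,1,n))$, and the compatibility $\a_{\h'}(B)=\a_{\h'}(\Gamma(B))$, $\fc_{\h'}(B)=\fc_{\h'}(\Gamma(B))$ together with the definition of $\lhd_{\theta',e}$ --- is precisely the intended argument. Your closing remark correctly identifies the only point needing care, namely that the $\a$-function passes through the $\Gamma$-correspondence, which the paper likewise delegates to the cited results of Chlouveraki.
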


 \end{abs}

  \begin{abs}{\bf An example : $\mathbf{G(2,2,2)\lhd G(2,1,2)}$.} \hspace{0,05cm} To deal with the case $e\mid n$ we have to blow up some fixed points of $\M_{\theta'}(2,2)$ (those which correspond to the blocks $\{\llambda\}$, when $\llambda$ is $e$-stuttering). Here we study the case $\l=n=2$ in order to have an idea of the strategy for the general case.
  
  We know that $(\cat{h}\times\cat{h}^*)/G(2,2,2)$ is the product of two cones and that for $\h'=(1,0)$, the group $G(2,1,2)$ has three blocks : $\{(2;\emptyset),(\emptyset;2)\}$, $\{(11;\emptyset),(\emptyset;11)\}$ and $\{(1;1)\}$. Since the multipartition  $\{(1;1)\}$ is $2$-stuttering, $G(2,2,2)$ has four blocks for this parameter. Hence, in order to describe geometrically the blocks of $G(2,2,2)$, we have to construct a variety $\M$ such that : \begin{itemize} 
 \item there exists a $\C^*$-equivariant morphism  $\M \twoheadrightarrow (\cat{h}\times\cat{h}^*)/G(2,2,2)$,
 \item $|(\M)^{\C^*}|=4$.
 \end{itemize} In fact, the crepant resolution : $$\M=\M_{(0,1)}(1)\times\M_{(0,1)}(1)$$ satisfies these properties.
  
\newpage
  
\hspace{1,5cm}  \begin{pspicture}(0,0)(10,2,3) 

\psellipticarc[linestyle=dashed](8.5,0.1)(1.5,0.25){0}{180} 
\psellipticarc(8.5,0.1)(1.5,0.25){180}{360} 
\psellipse(8.5,2.3)(1.5,0.25) 
\psline(7,2.25)(10,0.1) 
\psline(10,2.25)(7,0.1) 

\psellipticarc[linestyle=dashed](11.8,0.1)(1.5,0.25){0}{180} 
\psellipticarc(11.8,0.1)(1.5,0.25){180}{360} 
\psellipse(11.8,2.3)(1.5,0.25) 
\psline(10.3,2.25)(13.3,0.1) 
\psline(13.3,2.25)(10.3,0.1) 

\psellipticarc[linestyle=dashed](1,0.1)(1.5,0.25){0}{180}
\psellipticarc(1,0.1)(1.5,0.25){180}{360}
\psellipse(1,2.3)(1.5,0.25)
\psellipticarc[linestyle=dashed](1,1.175)(0.8,0.125){0}{180}
\psellipticarc(1,1.175)(0.8,0.125){180}{360}

\pscurve(-0.5,2.25)(0.2,1.175)(-0.5,0.1)

\pscurve(2.5,2.25)(1.8,1.175)(2.5,0.1)

\psellipticarc[linestyle=dashed](4.5,0.1)(1.5,0.25){0}{180}
\psellipticarc(4.5,0.1)(1.5,0.25){180}{360}
\psellipse(4.5,2.3)(1.5,0.25)
\pscurve(3,2.25)(3.7,1.175)(3,0.1)
\pscurve(6,2.25)(5.3,1.175)(6,0.1)
\psellipticarc[linestyle=dashed](4.5,1.175)(0.8,0.125){0}{180}
\psellipticarc(4.5,1.175)(0.8,0.125){180}{360}

\psline{->}(6,1.175)(7.5,1.175)

\rput(10.15,1.175){$\times$}
\rput(2.75,1.175){$\times$}

\rput(0.2,1.175){$\bullet$}
\rput(1.8,1.175){$\bullet$}
\rput(3.7,1.175){$\bullet$}
\rput(5.3,1.175){$\bullet$}




\rput(6,-0.8){$\M\twoheadrightarrow (\cat{h}\times\cat{h}^*)/G(2,2,2)$} 
\end{pspicture} 
        
    \vspace{1,5cm}

    Thus, in order to generalize this example, we have to put in relation these two varieties : $\M$ and $\M_{\theta'}(2,2)$. We have the following diagram :  \[ \xymatrix{ & \M_{\theta'}(2,2) \ar[r]^{p_1} \ar[d]_{p_2} & \M_{\theta'}(2) \ar[d]^{\pi_{\theta',0}} \\
    \M \ar[r]     &   (\cat{h}\times\cat{h}^*)/G(2,2,2) \ar[r]_p & (\cat{h}\times\cat{h}^*)/G(2,1,2)  }\] and if we denote by $X$ the regular set of  $(\cat{h}\times\cat{h}^*)/G(2,2,2)$ and if we suppose that $\pi_{\theta',0}$ is an isomorphism above $p(X)$ (this hypothesis is consistent because $\ttheta'$ depends on the variety $ (\cat{h}\times\cat{h}^*)/G(2,2,2)$) then we can show that $\M$ is the normalization of $\M_{\theta'}(2,2)$ (see \cite{li:al:12} for more details).
 The idea of the proof consists on describing these two varieties as blowups of the singular variety $(\cat{h}\times\cat{h}^*)/G(2,2,2)$ in order to relate them. First at all, the variety $\M$ satisfies $$\begin{array}{rcl}\M&=&\{((a,b,c),(a',b',c'),[\alpha:\beta:\gamma:\delta])\in (\cat{h}\times\cat{h}^*)/G(2,2,2)\times\p^3(\C),\\
& & ~~~a\gamma=c\alpha,~a\delta=c\beta,~a\alpha=b\gamma,~a\beta=b\delta,~a'\delta=c'\gamma,~a'\beta=c'\alpha,\\
& & ~~~a'\alpha=b'\beta,a'\gamma=b'\delta,~\alpha\delta=\beta\gamma\}\end{array}$$ and then it is isomorphic to the blowup of $(\cat{h}\times\cat{h}^*)/G(2,2,2)$ with respect to $I$, the maximal ideal which describes the singular set of $(\cat{h}\times\cat{h}^*)/G(2,2,2)=\{(a,b,c,a',b',c')\in \mathbb{A}^6(\C),~a^2=bc,~a'^2=b'c'\}$, i.e. $I=<A,B,C>\cdot<A',B',C'>$. Secondly, under the following hypothesis 

\vspace{-0,5cm}

\begin{center}
            \colorbox{gris25}{
                    \fbox{\begin{minipage}{0,977\textwidth} \textbf{Hypothesis (H).} The morphism $\pi_{\theta',0}~:~\M_{\theta'}(2)~\longrightarrow~(\cat{h}\times\cat{h}^*)/G(2,1,2)$ is an isomorphism above $p(X)$. \end{minipage}
                    }
            }
    \end{center} and thanks to the definition of $\M_{\theta'}(2,2)$, it is easily seen that the second projection $p_2~:~\M_{\theta'}(2,2)~\longrightarrow~(\cat{h}\times\cat{h}^*)/G(2,2,2)$ verifies the same property. Then, according to  \cite[7.17]{ha:ge:93}, $\M_{\theta'}(2,2)$ is the blowup of $(\cat{h}\times\cat{h}^*)/G(2,2,2)$ with respect to $J\subset I$ and therefore there exists a projective, surjective and $\C^*$-equivariant morphism $$\Phi~:~\M~\longrightarrow~\M_{\theta'}(2,2).$$ The cyclic group $C_2$ acts naturally on $\M$ and $\M_{\theta'}(2,2)$ thus the existence of $\Phi$ implies the existence of a projective and $\C^*$-equivariant morphism $$\tilde\Phi~:~\M/C_2\longrightarrow \M_{\theta'}(2,2)/C_2\simeq \M_{\theta'}(2)$$ which is an isomorphism over $\pi_{\theta',0}^{-1}(p(X))$ by construction. Moreover $\tilde\Phi$ is a bijection between the three fixed points of these varieties : indeed for $x\in(\M_{\theta'}(2))^{\C^*}$, since $\tilde{\Phi}^{-1}(x)$ is projective, it contains one fixed point and if $z\in\tilde{\Phi}^{-1}(x)$ is not fixed then $\lim_{\eta\rightarrow 0}\eta\cdot z$ and $\lim_{\eta\rightarrow +\infty}\eta\cdot z$ are two different fixed points on $\tilde{\Phi}^{-1}(x)$ and that is impossible. Finally, let $x\not\in\pi_{\theta',0}^{-1}(p(X))$ be not fixed, since $x\in\pi_{\theta',0}^{-1}\left((\cat{h}\times\{0\})/G(2,1,2)\cup(\{0\}\times\cat{h}^*)/G(2,1,2)\right)$, $\lim_{\eta\rightarrow 0}\eta\cdot z$ or $\lim_{\eta\rightarrow +\infty}\eta\cdot z$ exists. Moreover if $\dim \tilde\Phi^{-1}(x)\gq 1$, according to \cite[Chap AG, 10.3]{bo:li:91}, this limit belongs to the closed set $\mathcal{F}=\{z\in\M_{\theta'}(2), \dim\tilde\Phi^{-1}(z)\gq 1\}$ but, since this limit is fixed, this is impossible. Therefore, for all $x\not\in\pi_{\theta',0}^{-1}(p(X))$, the fiber $\tilde{\Phi}^{-1}(x)$ is finite and we can conclude thanks to the normality of the varieties. Hence, to prove that  $\M$ is the normalization of $\M_{\theta'}(2,2)$ under the hypothesis \textbf{(H)}, we just have to show that $\M$ is the normalization of $(\M/C_2)\times_{(\cat{h}\times\cat{h}^*)/G(2,1,2)}(\cat{h}\times\cat{h}^*)/G(2,2,2)$. It follows from the following diagram : \[ \xymatrix{ & \M \ar[rd]^{\bar p} \ar[ld]_{\Phi} & \\
(\M/C_2)\times_{(\cat{h}\times\cat{h}^*)/G(2,1,2)}(\cat{h}\times\cat{h}^*)/G(2,2,2) \ar[rr]_{~~~~~~~~~~~~~~\bar{p_1}}] \ar[d] _{\bar{p_2}} & & \M/C_2 \ar[d]^{\bar\pi}\\
(\cat{h}\times\cat{h}^*)/G(2,2,2) \ar[rr]_p & & (\cat{h}\times\cat{h}^*)/G(2,1,2)} \] because $\Phi$ is sujective, $\M$ is normal and $p$ and $\overline{p_1}$ have degree $2$.

  For general parameters $\l,~e,~n$ such that $e\mid \l$ and $e\mid n$, the normalization of $\M_{\theta'}(e,n)$ could be the variety whose $\C^*$-fixed points describe $CM_{\h'}(G(\l,e,n))$ because normalizing a variety consists on blowing up some subvarieties thus we can expect that the good points could be blown up, moreover the property of normality is necessary for the definition of the geometric ordering and, as the Calogero-Moser space $\spec Z(H_{\h'}(G(\l,e,n)))$ is normal, this property could be interesting too, in order to compare these two varieties, as in the $G(\l,1,n)$ case.

 \end{abs}
 

\bibliographystyle{alpha}

\begin{thebibliography}{00}

\bibitem{be:th:09}
     {Bellamy, Gwyn},
    {The {C}alogero-{M}oser partition for {$G(m,d,n)$}},
    {Nagoya Math. J.},
 {207},
   {2012},
    {47--77}.

\bibitem{bi:so:76}
     {Bialynicki-Birula, A.},
    {Some properties of the decompositions of algebraic varieties
              determined by actions of a torus},
    {Bull. Acad. Polon. Sci. S\'er. Sci. Math. Astronom. Phys.},
  {24},
   {1976},
    {667--674}.

\bibitem{bo:li:91}
     {Borel, Armand},
    {Linear algebraic groups},
     {Graduate Texts in Mathematics},
  {126},
 {Springer-Verlag},
 
   {1991}.

\bibitem{br:fa:02}
     {Brou{\'e}, Michel and Kim, SungSoon},
    {Familles de caract\`eres des alg\`ebres de {H}ecke
              cyclotomiques},
    {Adv. Math.},
  {172},
   {2002},
    {53--136}.

\bibitem{ch:bl:09}
     {Chlouveraki, Maria},
    {Blocks and families for cyclotomic {H}ecke algebras},
    {Lecture Notes in Mathematics},
  {1981},
 {Springer-Verlag},
   {2009}.

\bibitem{ch:sh:09}
     {Chlouveraki, Maria and Jacon, Nicolas},
    {Schur elements and basic sets for cyclotomic {H}ecke algebras},
    {J. Algebra Appl.},
   {10},
   {2011},
    {979--993}.



\bibitem{ch:ce:11}
    {Chlouveraki, Maria and Gordon, Iain and Griffeth, Stephen},
      {Cell modules and canonical basic sets for {H}ecke algebras
              from {C}herednik algebras},
 {New trends in noncommutative algebra},
    {Contemp. Math.},
     {562},
      {77--89},
 {Amer. Math. Soc.},
      {2012}.

\bibitem{cr:no:03}
     {Crawley-Boevey, William},
    {Normality of {M}arsden-{W}einstein reductions for
              representations of quivers},
    {Math. Ann.},
  {325},
   {2003},
     {55--79}.

\bibitem{et:sy:02}
     {Etingof, Pavel and Ginzburg, Victor},
    {Symplectic reflection algebras, {C}alogero-{M}oser space, and
              deformed {H}arish-{C}handra homomorphism},
    {Invent. Math.},
  {147},
   {2002},
    {243--348}.

\bibitem{ge:ch:00}
     {Geck, Meinolf and Pfeiffer, G{\"o}tz},
    {Characters of finite {C}oxeter groups and {I}wahori-{H}ecke
              algebras},
 {London Mathematical Society Monographs. New Series},
  {21},
 {The Clarendon Press Oxford University Press},
   {2000}.

\bibitem{ge:re:10}
     {Geck, Meinolf and Jacon, Nicolas},
    {Representations of {H}ecke algebras at roots of unity},
 {Algebra and Applications},
  {15},
 {Springer-Verlag London Ltd.},
   {2011}.

\bibitem{ge:or:12}
   {Geck, Meinolf and Iancu,
              Lacrimioara},
 {{O}rdering {L}usztig's families in type ${B}_n$},
 {arXiv:1205.3762}.


\bibitem{go:ba:03}
     {Gordon, Iain},
    {Baby {V}erma modules for rational {C}herednik algebras},
    {Bull. London Math. Soc.},
  {35},
   {2003},
    {321--336}.

\bibitem{go:qu:08}
     {Gordon, Iain},
    {Quiver varieties, category {$\mathcal O$} for rational {C}herednik
              algebras, and {H}ecke algebras},
    {Int. Math. Res. Pap. IMRP},
   {2008}.

\bibitem{go:ca:09}
     {Gordon, Iain and Martino, Maurizio},
    {Calogero-{M}oser space, restricted rational {C}herednik
              algebras and two-sided cells},
    {Math. Res. Lett.},  {16},
   {2009},
    {255--262}.

\bibitem{ha:ge:93}
     {Hartshorne, Robin},
    {Algebraic geometry},
     {Graduate Texts in Mathematics, No. 52},
 {Springer-Verlag},
   {1977}.


\bibitem{ja:th:81}
     {James, Gordon and Kerber, Adalbert},
    {The representation theory of the symmetric group},
    {Encyclopedia of Mathematics and its Applications},
  {16},
   {1981}.

\bibitem{li:al:12}
   {Liboz, Emilie},
 {{A}lgèbres de {C}herednik et ordres sur les blocs de {C}alogero-{M}oser des groupes imprimitifs},
   {Thèse de doctorat, Université de Franche Comté},
  {2012}.

\bibitem{ma:ca:10}
     {Martino, Maurizio},
    {The {C}alogero-{M}oser partition and {R}ouquier families for
              complex reflection groups},
    {J. Algebra},
   {323},
   {2010},
    {193--205}.

\bibitem{na:qu:09}
     {Nakajima, Hiraku},
    {Quiver varieties and branching},
    {SIGMA Symmetry Integrability Geom. Methods Appl.},
    {5},
   {2009},
    {Paper 003, 37}.

\bibitem{re:mo:08}
     {Reineke, Markus},
    {Moduli of representations of quivers},
{Trends in representation theory of algebras and related
              topics},
    {EMS Ser. Congr. Rep.},
    {589--637},
 {Eur. Math. Soc., Z\"urich},
   {2008}.

\bibitem{ro:q:08}
     {Rouquier, Rapha{\"e}l},
    {{$q$}-{S}chur algebras and complex reflection groups},
    {Mosc. Math. J.},
  {8},
   {2008},
    {119--158, 184}.

\bibitem{sh:fi:54}
     {Shephard, G. C. and Todd, J. A.},
    {Finite unitary reflection groups},
    {Canadian J. Math.},
  {6},
   {1954},
    {274--304}.

\bibitem{su:eq:74}
     {Sumihiro, Hideyasu},
    {Equivariant completion},
    {J. Math. Kyoto Univ.},
  {14},
   {1974},
    {1--28}.

\end{thebibliography}

\end{document}